\theoremstyle{plain}
\newtheorem{theorem}{Theorem}
\newtheorem{corollary}{Corollary}
\newtheorem{lemma}{Lemma}
\newtheorem{conjecture}{Conjecture}
\theoremstyle{definition}
\newtheorem{remark}{Remark}
\DeclareMathOperator{\Res}{Res}
\newcommand{\enm}[1]{\ensuremath{#1}} 
\newcommand{\PP}{\enm{\mathbb {P}}}
\newcommand{\Oo}{\enm{\mathcal {O}}}
\newcommand{\Ii}{\enm{\mathcal {I}}}
\newcommand{\KK}{\enm{\mathbb {K}}}
\begin{document}

\title{Maximal rank of space curves in the range A}
\author{Edoardo Ballico, Philippe Ellia, Claudio Fontanari}
\address[Edoardo Ballico, Claudio Fontanari]{Dipartimento di Matematica, Universit\`a di Trento, Via Sommarive 14, 38123 Povo (Trento), Italy.}
\email{edoardo.ballico@unitn.it, claudio.fontanari@unitn.it}
\address[Philippe Ellia]{Dipartimento di Matematica e Informatica, Universit\`a degli Studi di Ferrara, Via Machiavelli 30, 44121 Ferrara, Italy.} 
\email{phe@unife.it}
\thanks{This research was partially supported by PRIN 2012 "Geometria delle variet\`a algebriche",
by FIRB 2012 "Moduli spaces and Applications", and by GNSAGA of INdAM (Italy).}
\subjclass[2010]{Primary: 14H50. Secondary: 14N05}
\keywords{Space curve. Postulation. Hilbert function. Hilbert scheme.}

\begin{abstract}
We prove the following statement, which has been conjectured since 1985: 
\emph{There exists a constant $K$ such that for all natural numbers $d,g$ with $g\le Kd^{3/2}$ there exists an irreducible component of the Hilbert scheme 
of $\PP^3$ whose general element is a smooth, connected curve of degree $d$ and genus $g$ of maximal rank.}
\end{abstract}

\maketitle

\section{Introduction}

The postulation of algebraic space curves has been the object of wide interest in the last thirty years (see for instance \cite{be2}, \cite{be3}, \cite{l}, \cite{jp}, \cite{la}). 
In particular, the following Conjecture was stated in 1985 in \cite{be3}, p. 2 (see also \cite{be}, \S 6, Problem~4): 

\begin{conjecture}\label{main}
There exists a constant $K$ such that for all natural numbers $d,g$ with $g\le Kd^{3/2}$ there exists an irreducible component of the Hilbert scheme of $\PP^3$ whose general element is a smooth, connected curve of degree $d$ and genus $g$ of maximal rank.
\end{conjecture}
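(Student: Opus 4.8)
The plan is to combine semicontinuity with an explicit degeneration argument, reducing the theorem to the construction of a single well-behaved curve for each admissible $(d,g)$. Recall that a curve $C\subset\PP^3$ has maximal rank exactly when $h^0(\Ii_C(n))\cdot h^1(\Ii_C(n))=0$ for every $n\ge 0$, and that both $h^0(\Ii_C(n))$ and $h^1(\Ii_C(n))$ are upper semicontinuous in flat families. Hence it suffices to produce, for each $(d,g)$ with $g\le Kd^{3/2}$, a connected nodal curve $C_0\subset\PP^3$ of degree $d$ and arithmetic genus $g$ such that $C_0$ has maximal rank and $h^1(\PP^3,N_{C_0})=0$, where $N_{C_0}$ is the normal sheaf. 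The vanishing $h^1(N_{C_0})=0$ guarantees, via the smoothing results of Hartshorne--Hirschowitz for nodal curves, that $C_0$ is a smooth point of the Hilbert scheme lying on a unique component $\mathcal H$ whose general member is a smooth connected curve of degree $d$ and genus $g$; since for each $n$ the group that vanishes at $C_0$ continues to vanish nearby, semicontinuity transports maximal rank from $C_0$ to the general member of $\mathcal H$ (only finitely many $n$ being in question, the rest handled by regularity).

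First I would locate the \emph{critical degree}. The expected postulation forces $\binom{n+3}{3}$ and $dn+1-g$ to cross near $n_0\approx\sqrt{6d}$, and it is precisely at $n\approx n_0$ that both $h^0(\Ii_C(n))$ and $h^1(\Ii_C(n))$ are small, so that maximal rank carries genuine content; away from $n_0$ one of the two groups vanishes for trivial dimension reasons once $C_0$ is connected and nondegenerate. The condition $g\le Kd^{3/2}$ is exactly the regime in which the genus stays comparable to $\binom{n_0+3}{3}\sim 6^{1/2}d^{3/2}$, so that the curve does not overflow the first cohomological regime; this is what singles out ``range A''.

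For the building blocks I would work on a smooth quadric surface $Q\cong\PP^1\times\PP^1$ (and, where needed, on a smooth cubic), since a smooth curve of bidegree $(a,b)$ has degree $a+b$ and genus $(a-1)(b-1)$, and its postulation can be computed from the restriction sequence of $Q$ in $\PP^3$ together with the product structure of $H^0(\Oo_Q(n))$. Taking $a\sim\sqrt d$ realizes genera of order $d^{3/2}$, i.e.\ the full height of range A, and a direct cohomology computation shows these curves have maximal rank for the discrete set of values $(a-1)(b-1)$, which are spaced about $d$ apart near the top of the range. To reach \emph{every} intermediate genus at fixed degree I would attach to such a curve a controlled configuration of lines and conics, or pass to a suitable nodal union, raising the arithmetic genus one unit at a time while keeping the degree, the connectedness, and the maximal rank property under control.

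The technical heart, and the step I expect to be the main obstacle, is the inductive passage that fills the whole range uniformly. Here I would use the m\'ethode d'Horace: degenerate $C$ to a nodal curve $C_0=C'\cup D$ with $D$ a plane curve in a hyperplane $H\cong\PP^2$ and $C'$ the residual curve, and exploit the Castelnuovo sequence
\[
0\longrightarrow \Ii_{\Res_H C_0}(n-1)\longrightarrow \Ii_{C_0}(n)\longrightarrow \Ii_{C_0\cap H,\,H}(n)\longrightarrow 0,
\]
with $\Res_H C_0=C'$ and plane trace $C_0\cap H$ containing $D$, to reduce maximal rank of $C_0$ in degree $n$ to maximal rank of the residual $C'$ in degree $n-1$ together with maximal rank of the plane trace in $\PP^2$ in degree $n$. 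Choosing the bidegree of $D$ and the genus of $C'$ so as to absorb exactly the degree and genus increments lets the induction march through all admissible $(d,g)$. The delicate point is to keep the trace/residual split matching the expected postulation \emph{precisely at $n\approx n_0$}, where the Horace estimates are tightest and any constant loss would eventually violate the bound $g\le Kd^{3/2}$; I expect that a differential (Horace) refinement, tracking the tangent directions at the nodes lying on $H$, is needed to prevent an accumulating error. Throughout, I would have to verify simultaneously that connectedness and the smoothing condition $h^1(N_{C_0})=0$ persist, so that the final limit is a smooth connected curve of maximal rank.
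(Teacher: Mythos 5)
Your global framework (degenerate curve $+$ semicontinuity $+$ smoothing via normal-bundle vanishing, with Castelnuovo--Mumford regularity disposing of all but the critical degrees) is the paper's framework too, and your location of the critical degree $n_0\sim\sqrt{6d}$ is right. But two of your concrete choices fail, and the second one is exactly the difficulty the paper was written to overcome.

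First, the building blocks. You claim that a smooth curve $B$ of bidegree $(a,b)$ on $Q$ with $a\sim\sqrt d$ has maximal rank; it does not, and badly so. To reach genus $(a-1)(b-1)\sim d^{3/2}$ you must take $b\sim d$, and then $B\subset Q$ gives $h^0(\Ii _B(n))\ge h^0(\Ii _Q(n))>0$ for all $n\ge 2$, while $0\to \Ii _Q(n)\to \Ii _B(n)\to \Oo _Q(n-a,n-b)\to 0$ and $h^2(\Ii _Q(n))=0$ give $h^1(\Ii _B(n))\ge h^1(Q,\Oo _Q(n-a,n-b))=(n-a+1)(b-n-1)>0$ for all $a\le n\le b-2$, an interval of length $\sim d$ containing $n_0$. (Incidentally this also refutes your remark that away from $n_0$ one of the two groups vanishes ``for trivial dimension reasons'' for nondegenerate connected curves: your own blocks are the counterexample.) Nor can attaching a residual curve $R$ repair this: from $0\to \Ii _{B\cup R}(n)\to \Ii _B(n)\to \Ii _{B\cap R,R}(n)\to 0$, the vanishing of $h^1(\Ii _{B\cup R}(n_0))$ forces $H^1(\Ii _B(n_0))$, of dimension $\gtrsim d^{3/2}$, to inject into $H^1(R,\Ii _{B\cap R,R}(n_0))$, whose dimension is at most $p_a(R)+\sharp (B\cap R)+(\hbox{number of components of }R)$; components of $R$ not on $Q$ meet $B$ in at most $2\deg (R)\le 2d$ points, and components on $Q$ only enlarge the unbalanced quadric curve, so $R$ itself would need genus $\gtrsim d^{3/2}$ --- circular. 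This is precisely why the paper's genus-carrying blocks are the ACM curves $C_s$ of (\ref{eqa6}): genus $\sim s^3/3\sim d^{3/2}$, maximal rank, and, decisively, $h^0(\Ii _{C_s}(s-1))=0$, i.e.\ they lie on no surface of degree below their critical degree. In range A the genus must sit on blocks avoiding every surface of degree up to $n_0$; curves on quadrics or cubics can only serve as low-genus auxiliary pieces (in the paper, lines of the rulings of $Q$, which is used as the Horace surface, not as a source of blocks).

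Second, the single-curve strategy. Your plan needs, for every $(d,g)$, one nodal curve with both $h^0(\Ii _{C_0}(m-1))=0$ and $h^1(\Ii _{C_0}(m))=0$ simultaneously. Horace inductions do not deliver this: at each critical degree the Euler characteristic leaves a remainder (the paper's $d(s,t,k)$, with $0\le d(s,t,k)\le s-2$), which is absorbed by auxiliary points $S_1\cup S_2$, so the curve $X$ constructed for surjectivity satisfies $h^1(\Ii _X(s))=0$ but $h^0(\Ii _X(s))=d(s,t,k)\ne 0$; the curve built for injectivity in degree $m-1$ has different invariants from the one built for surjectivity in degree $m$. Since for $d<g+3$ the Hilbert scheme may be reducible, semicontinuity cannot combine the two curves unless one proves they lie in a \emph{common} irreducible component --- and this, not a sharper Horace estimate (your ``differential Horace'' remark aims at a different issue), is the paper's main innovation over \cite{be2,be3}: every construction is anchored to the same pair $C_t\sqcup C_k$ joined to a non-special curve at one point of each (Lemma \ref{aaa1}), which by irreducibility of the Hilbert scheme of non-special curves \cite{ein,ein1} tags a unique component $W(t,k,a_d,b)$; both critical vanishings are then produced by curves inside this one component (Section \ref{Sm}), both vanishings on a single curve being achieved only near $d(m,g)_{\mathrm{min}}$ and $d(m,g)_{\mathrm{max}}$. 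Without such a mechanism your argument does not close. Two smaller gaps: smoothability of a nodal curve does not follow from $h^1(N_{C_0})=0$ alone (the paper uses $h^1(N_{C_0}(-1))=0$ and \cite[Corollary 1.2]{fl1}, preserved through every attachment), and the regularity step needs the speciality control $h^1(\Oo _{C_0}(m-2))=0$, automatic for the smooth curves in this range but not for nodal degenerations, where it must be tracked (the paper does so via $e(C_s)=s-3$).
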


\noindent
We recall that a space curve $C$ is of \emph{maximal rank} if the natural maps $H^0(\mathcal{O}_{\PP^3}(m)) \to H^0(\mathcal{O}_C(m))$ are either injective or surjective for every $m$.

Here we consider smooth and connected curves $X$ with $h^1(\Ii _X(m)) =0$, $h^0(\Ii _X(m-1))=0$, $\deg (X) =d$, 
$g(X)=g$ and $h^1(\Oo _X(m-2)) =0$ (hence of maximal rank by Castelnuovo-Mumford regularity). Since $h^1(\Ii _X(m)) =0$ and $h^1(\Oo _X(m)) =0$, we have

 \begin{equation}\label{eqi1}
 1 +md-g \le \binom{m+3}{3}
 \end{equation}

 Let $d(m,g)_{\mathrm{max}}$ be the maximal integer $d$ such that (\ref{eqi1}) is satisfied, i.e. set $d(m,g)_{\mathrm{max}}$ $:= \lfloor \binom{m+3}{3}+g-1)/m\rfloor$.
 Since $h^0(\Ii _X(m-1)) =0$ and $h^1(\Oo _X(m-1)) =0$, we have

\begin{equation}\label{eqi2}
 1 + (m-1)d-g \ge \binom{m+2}{3}
 \end{equation}

Let $d(m,g)_{\mathrm{min}}$ be the minimal integer $d$ such that (\ref{eqi2}) is satisfied, i.e. set $d(m,g)_{\mathrm{min}}:= \lceil \binom{m+2}{3}+g-1)/(m-1)\rceil$.
 
For every integer $s >0$ define the number $p_a(C_s): = s(s+1)(2s-5)/6 +1$ (which is going to to be the genus of the 
curve $C_s$ to be introduced later in Section~\ref{preliminaries}). For all positive integers $m \ge 3$ set 
\begin{eqnarray*}
\varphi (m) &=& p_a(C_{\lfloor m/\sqrt{20}\rfloor -4}) +p_a(C_{\lfloor m/\sqrt{20}\rfloor -5}) \\
&=&
\frac{(\lfloor m/\sqrt{20}\rfloor -4)(\lfloor m/\sqrt{20}\rfloor -3) (2\lfloor m/\sqrt{20}\rfloor -13)}{6}+1 \\
& & + \frac{(\lfloor m/\sqrt{20}\rfloor -5)(\lfloor m/\sqrt{20}\rfloor -4) (2\lfloor m/\sqrt{20}\rfloor -15)}{6}+1.
\end{eqnarray*}
For any smooth curve $X \subset \PP^3$ let $N_X$ denote the normal bundle of $X$ in $\PP^3$. If $h^1(N_X)=0$, then $X$ is a smooth point of the Hilbert scheme
of $\PP^3$ and this Hilbert scheme has the expected dimension $h^0(N_X)$ at $X$.

Our main result is the following:
\begin{theorem}\label{bingo}
For every integer $m \ge 3$ and every $(d,g)$ with $17052 \le g \le \varphi (m)$ and $d(m,g)_{\mathrm{min}} \le d \le d(m,g)_{\mathrm{max}}$ 
there exists a component of the Hilbert scheme of curves in $\PP^3$ of genus $g$ and degree $d$, whose general element $X$ is smooth and satisfies 
$h^0(\Ii _{X}(m-1)) =0$, $h^1(\Ii _{X}(m)) =0$, $h^1(\Oo _{X}(m-2)) =0$, and $h^1(N_X(-1))=0$. 
\end{theorem}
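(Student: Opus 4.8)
The plan is to reduce the theorem, for each admissible pair $(d,g)$, to the construction of a single explicit (in general reducible and nodal) curve $Y\subset \PP^3$ of degree $d$ and genus $g$ satisfying the three postulation vanishings together with $h^1(N_Y(-1))=0$. Since $h^1(N_Y(-1))=0$ forces $h^1(N_Y)=0$, the point $[Y]$ is a smooth point of the Hilbert scheme, so $Y$ deforms to a smooth curve $X$ in a single irreducible component; semicontinuity of cohomology then transports all four conditions verified for $Y$ to the general member $X$ of that component. Thus everything rests on building $Y$ and checking its cohomology.

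\emph{Horace reduction.} Fix a plane $H\subset\PP^3$ and degenerate $Y$ so that part of it lies on $H$. From the residual exact sequence
\[ 0\to \Ii_{\Res_H Y}(m-1)\to \Ii_Y(m)\to \Ii_{Y\cap H,H}(m)\to 0 \]
the vanishings $h^0(\Ii_Y(m-1))=0$ and $h^1(\Ii_Y(m))=0$ split into a postulation statement for the residual scheme in degree $m-1$ and a plane postulation problem in degree $m$. Iterating across successive hyperplanes sets up an induction on $m$ whose base cases and whose plane traces are governed by the classical maximal rank theorem on $\PP^2$. The inequalities \eqref{eqi1} and \eqref{eqi2}, which bracket $d$ between $d(m,g)_{\mathrm{min}}$ and $d(m,g)_{\mathrm{max}}$, guarantee that the Euler characteristics on the two sides have the correct sign, so that injectivity on the left and surjectivity on the right are numerically possible throughout the induction.

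\emph{Building blocks and coverage of the range.} I would assemble the reference curve from the curves $C_s$ with $s\approx \lfloor m/\sqrt{20}\rfloor$: the very definition $\varphi(m)=p_a(C_{\lfloor m/\sqrt{20}\rfloor-4})+p_a(C_{\lfloor m/\sqrt{20}\rfloor-5})$ indicates that the top of the genus range is realised by a nodal union $C_{s}\cup C_{s-1}$ of two such curves. To sweep out the entire interval $17052\le g\le\varphi(m)$ and the entire degree interval $[d(m,g)_{\mathrm{min}},d(m,g)_{\mathrm{max}}]$ I would then modify this reference curve by controlled operations — attaching free rational components or secant lines, passing to a linked curve via a complete intersection, or absorbing extra points on $H$ — each changing $(d,g)$ by a predictable amount while respecting the Horace bookkeeping. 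For the normal bundle I would use the standard exact sequence expressing $N_Y$ in terms of the normal bundles of the components twisted down at the nodes, reducing $h^1(N_Y(-1))=0$ to $h^1(N_{C_s}(-1))=0$ and the analogous statements for the added components, which should follow from the explicit geometry of the $C_s$.

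The main obstacle I anticipate is precisely this last coverage step together with the joint bookkeeping: producing, for \emph{every single} $(d,g)$ in the two-dimensional range, a curve for which all three postulation vanishings and the normal bundle vanishing hold \emph{simultaneously}. Degree and genus can each be adjusted by elementary modifications, but keeping the plane trace inside the maximal rank range on $\PP^2$ while preserving $h^1(N(-1))=0$ under every such modification is delicate; the explicit constant $17052$ and the factor $\sqrt{20}$ are exactly the thresholds that make the inductive cohomological estimates close up. I therefore expect the heaviest work to lie in verifying that the $C_s$ and their nodal unions carry the asserted postulation and normal bundle properties, where the special structure of these building-block curves is essential.
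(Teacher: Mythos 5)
Your reduction (build one nodal curve $Y$ with all four vanishings, smooth it, apply semicontinuity) is logically sound, but it concentrates the entire theorem in the step you explicitly leave open, and that step is precisely what the paper's architecture is designed to avoid. The difficulty is not mere bookkeeping: in a Horace induction, the specialization proving surjectivity ($h^1(\Ii_Y(m))=0$) and the one proving injectivity ($h^0(\Ii_Y(m-1))=0$) are run in different degrees with different residual invariants, and the paper's inductive assertions $M(s,t,k)$ and $N(x,t,k,y)$ only ever produce bijectivity for a curve \emph{plus} an auxiliary finite set: from $h^i(\Ii_{X\cup S_1\cup S_2}(x))=0$, $i=0,1$, one gets $h^1(\Ii_X(x))=0$ but $h^0(\Ii_X(x))=\sharp(S_1\cup S_2)\ne 0$, so the inductive curve alone never carries both vanishings in the relevant degree. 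The paper therefore does something structurally different: for each $(d,g)$ it constructs \emph{two} curves, $X_1$ with $h^1(\Ii_{X_1}(m))=0$ and $X_2$ with $h^0(\Ii_{X_2}(m-1))=0$, and proves that both lie in one and the same irreducible component $W(t,k,a_d,b)$, defined via Lemma \ref{aaa1} (a non-special curve attached at one point to each of $C_t$ and $C'_k$) together with Ein's irreducibility theorem (\cite{ein,ein1}); semicontinuity applied to the general element of that single component then yields all conditions simultaneously. Since in the range $g\sim d^{3/2}$ one has $d<g+3$ and the Hilbert scheme is typically reducible (\cite{d,i,k,kk1,kkl,kk2}), ``the component containing $Y$'' is not determined by $(d,g)$, and forcing two independent constructions into a common component is exactly what Section \ref{Sm} (the bulk of the proof) accomplishes, with separate treatments near $d(m,g)_{\mathrm{min}}$ and near $d(m,g)_{\mathrm{max}}$ whose ranges of validity overlap in the middle. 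Your proposal contains no mechanism for this, and without it --- or without an actual construction of your single curve $Y$, which you concede you do not have --- there is no proof.

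Two secondary inaccuracies. First, smoothability of a nodal curve does not follow from $[Y]$ being a smooth point of the Hilbert scheme: $h^1(N_Y)=0$ gives smoothness of the point but does not say that nearby deformations smooth the nodes; the paper instead invokes Fl{\o}ystad's criterion (\cite[Corollary 1.2]{fl1}), which uses exactly the stronger hypothesis $h^1(N_Y(-1))=0$, so your implication chain should be replaced by that citation. Second, the extremal genus $\varphi(m)$ is realized by the \emph{disjoint} union $C_{t,k}=C_t\sqcup C_k$ with $t=\lfloor m/\sqrt{20}\rfloor-4$, $k=t-1$, not by a nodal union; the connecting nodes appear only at the very end through a third, non-special component, and this matters both for the genus count and because the postulation and normal-bundle statements are proved for the disjoint configuration $C_{t,k}\sqcup Y$. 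Finally, the paper's Horace induction is run on a smooth quadric $Q$ (exchanging the two rulings at each step, with lines of the rulings absorbing the degree increments), not on a plane; a plane-based induction as you sketch would require rebuilding all the numerical lemmas (Lemmas \ref{aa+1}, \ref{+n2}, \ref{+n3}) from scratch.
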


As an application of Theorem \ref{bingo} we prove Conjecture \ref{main}. Indeed, if $g=0$  we have just to quote \cite{H}. Next, if $0 < g < 17052$ we may choose $K>0$ 
such that $g \ge K (g+3)^{3/2}$. Hence from $K (g+3)^{3/2} \le g \le Kd^{3/2}$ we get $d \ge g+3$ and we are done by \cite{be2}. Finally, if $g \ge 17052$ we have the following:

\begin{corollary}\label{cor}
Let $K = \frac{2}{3} \left(\frac{1}{10} \right)^{3/2}$  and $\varepsilon = \frac{11}{20} +4 \left(\frac{1}{20} \right)^{3/2}$. 
If $17052 \le g \le Kd^{3/2} - 6 \varepsilon d$ then there exists an irreducible component of the Hilbert scheme of $\PP^3$ whose general 
element $X$ is a smooth, connected curve of degree $d$ and genus $g$ of maximal rank and with $h^1(N_X(-1))=0$.
\end{corollary}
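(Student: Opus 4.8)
The plan is to derive Corollary \ref{cor} directly from Theorem \ref{bingo} by producing, for each admissible pair $(d,g)$, a single integer $m\ge 3$ satisfying all hypotheses of that theorem; the maximal rank and the vanishing $h^1(N_X(-1))=0$ are then exactly what Theorem \ref{bingo} hands us (recall that $h^0(\Ii_X(m-1))=0$, $h^1(\Ii_X(m))=0$ and $h^1(\Oo_X(m-2))=0$ already force maximal rank by Castelnuovo--Mumford regularity). So I would fix $(d,g)$ with $17052\le g\le Kd^{3/2}-6\varepsilon d$ and first record that, since the right-hand side must itself be at least $17052$, this inequality forces $d$ to exceed an explicit absolute constant of order $10^4$. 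This largeness of $d$ (hence of $m$ and of $s:=\lfloor m/\sqrt{20}\rfloor$) is spent later to make the curves $C_{s-4},C_{s-5}$ legitimate and to run the asymptotic estimates.

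The next step is to locate $m$. Writing $X_m:=(\binom{m+3}{3}+g-1)/m$, the definitions give $d(m,g)_{\mathrm{max}}=\lfloor X_m\rfloor$ and $d(m+1,g)_{\mathrm{min}}=\lceil X_m\rceil$, so the admissible degree-intervals $[d(m,g)_{\mathrm{min}},d(m,g)_{\mathrm{max}}]$ for consecutive $m$ abut. As $X_m$ is increasing once $m$ is large, these intervals cover every sufficiently large integer, and I take $m$ to be the unique index on this increasing branch whose interval contains $d$. By construction $d(m,g)_{\mathrm{min}}\le d\le d(m,g)_{\mathrm{max}}$, so both (\ref{eqi1}) and (\ref{eqi2}) hold, that is $md\le\binom{m+3}{3}+g-1$ and $(m-1)d\ge\binom{m+2}{3}+g-1$.

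The heart of the argument is to subtract these two inequalities: since $\binom{m+3}{3}-\binom{m+2}{3}=\binom{m+2}{2}$, the genus cancels and one gets
\[ d\le\binom{m+2}{2}<\frac{(m+2)^2}{2},\qquad\text{hence}\qquad m>\sqrt{2d}-2. \]
It is exactly this clean bound, with $\sqrt{2}$ in place of the sharper but less uniformly extractable $\sqrt{6}$, that generates the constant $K=\frac{2}{3}(1/10)^{3/2}$: feeding $m>\sqrt{2d}-2$ into $s=\lfloor m/\sqrt{20}\rfloor$ yields $s>\sqrt{d/10}-(\tfrac{1}{\sqrt5}+1)$. Since $\varphi$ is non-decreasing in $m$ for $m$ large, I then expand $\varphi(m)=p_a(C_{s-4})+p_a(C_{s-5})$ using $p_a(C_t)=\tfrac{t^3}{3}-\tfrac{t^2}{2}-\tfrac{5t}{6}+1$: the leading term is $\tfrac23 s^3\ge\tfrac23(d/10)^{3/2}=Kd^{3/2}$, while the remaining contributions are linear in $d$ (of size $\sim d/10$) together with lower-order $\sqrt d$ and constant terms. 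The constants $K$ and $\varepsilon$ are calibrated so that, past the threshold guaranteed by $g\ge 17052$, all these corrections are absorbed into $-6\varepsilon d$, giving $\varphi(m)\ge Kd^{3/2}-6\varepsilon d\ge g$. Thus $17052\le g\le\varphi(m)$ and $d(m,g)_{\mathrm{min}}\le d\le d(m,g)_{\mathrm{max}}$, so Theorem \ref{bingo} applies to this $m$ and the corollary follows.

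The step I expect to be the main obstacle is this final estimate: carrying out the explicit lower bound on $\varphi(m)$ so that it dominates $Kd^{3/2}-6\varepsilon d$ \emph{for every} admissible $d$, keeping exact track of the floor in $s=\lfloor m/\sqrt{20}\rfloor$, of the shifts $-4,-5$, and of the crossover at which the cubic term overtakes the linear losses — which is precisely where the hypothesis $g\ge 17052$ is consumed. By contrast, the existence of $m$ via the abutting intervals and the cancellation producing $m>\sqrt{2d}-2$ are soft and should be routine.
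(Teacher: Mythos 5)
Your proposal is correct and, at the level of strategy, it is the paper's proof: locate $m$ with $d(m,g)_{\mathrm{min}}\le d\le d(m,g)_{\mathrm{max}}$ (the paper takes the minimal $m$ satisfying (\ref{eqtt1}), which is exactly your abutting-intervals mechanism), subtract the two defining inequalities to get $d\le\binom{m+2}{2}$, verify $g\le\varphi (m)$, and quote Theorem \ref{bingo}. The genuine difference is the \emph{direction} of the final estimate, and your direction is the one that actually closes. The paper converts everything to $m$: it bounds $Kd^{3/2}$ above via $d\le\binom{m+2}{2}$ but bounds the negative term via $d\ge m^2/6$, so of the available $-6\varepsilon d$ it keeps only $-\varepsilon m^2$, and then asserts $\frac23(\frac{1}{20})^{3/2}(m+2)^3-\varepsilon m^2\le\varphi (m)$. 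As written that assertion is false for infinitely many $m$: with $s=\lfloor m/\sqrt{20}\rfloor$ and $\theta$ the fractional part of $m/\sqrt{20}$, the $m^2$-coefficient of $\varphi(m)$ is $-(10+2\theta)/20$, which drops below the available $-11/20$ whenever $\theta>1/2$; concretely at $m=1364$ one has $s=304$, $\varphi(1364)=p_a(C_{300})+p_a(C_{299})=17820102$, while $\frac23(\frac{1}{20})^{3/2}1366^3-\varepsilon\,1364^2\approx 17891869$. (The Corollary survives because admissible pairs $(d,g)$ for such $m$ stay far below this bound, but the paper's chain does not show that.) Your route avoids the lossy substitution $d\ge m^2/6$: by monotonicity of $\varphi$ you may evaluate at the least integer $m_0>\sqrt{2d}-2$, so that with $w=\sqrt{d/10}$, $c=1+1/\sqrt5$ one has $w-c<s_0\le w$, hence
\[
\varphi(m)\ \ge\ \tfrac23(w-c)^3-10w^2+\tfrac{145}{3}(w-c)-74\ \ge\ Kd^{3/2}-\bigl(1+\tfrac{c}{5}\bigr)d-146,
\]
and $1+c/5\approx 1.29$ sits comfortably below $6\varepsilon\approx 3.57$, giving the needed $\varphi(m)\ge Kd^{3/2}-6\varepsilon d\ge g$ with margin $\approx 2.28\,d$ once $d$ is large, which your opening remark guarantees. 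Three small repairs when you flesh this out: your claim $\frac23 s^3\ge\frac23(d/10)^{3/2}$ is backwards, since $s_0\le\sqrt{d/10}$ always (the leading term must be expanded with the shift $c$, as above); the linear corrections are of size $\approx 1.3d$, dominated by $-10s^2\ge -d$, not ``$\sim d/10$''; and the existence of your $m$ on the increasing branch needs the one-line check that $d$ exceeds $\min_m X_m\approx\tfrac{3^{2/3}}{2}g^{2/3}\le\tfrac{3^{2/3}}{2}K^{2/3}d<d$ --- a check the paper omits as well, since its ``minimal non-negative integer'' satisfying (\ref{eqtt1}) is literally $m=0$, where that inequality is vacuous.
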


The constant $K$ in Corollary \ref{cor} is certainly not optimal, but the exponent $d^{3/2}$ is sharp among 
the curves with $h^1(N_X)=0$ (see \cite{eh}, \cite[Corollaire 5.18]{pe} and \cite[II.3.6]{h2} for the condition $h^1(N_X(-2)) =0$, \cite[II.3.7]{h2} and \cite{w} for the condition $h^1(N_X(-1)) =0$, and \cite[II.3.8]{h2} for the condition $h^1(N_X) =0$).

If $X$ is as in Theorem \ref{bingo}, then by Castelnuovo-Mumford regularity we have $h^1(\Ii _X(t)) =0$ for all $t>m$ and the homogeneous ideal of $X$ is generated by forms of degree $m$ and degree $m+1$. A smooth curve $Y\subset \PP^3$ with $h^0(\Ii _Y(m-1)) =0$, $\frac{m^2+4m+6}{6} \le \deg (Y) <  \frac{m^2+4m+6}{3}$ and maximal genus among the curves with $h^0(\Ii_Y(m-1)) =0$ satisfies $h^1(\Oo_Y(m-1)) =0$ (\cite[proof of Theorem 3.3 at p. 97]{h0}).
In the statement of Theorem \ref{bingo} we claim one shift more, namely, $h^1(\Oo_X(m-2)) =0$, in order to apply Castelnuovo-Mumford regularity to $X$. 

We describe here one of the main differences with respect to \cite{H, be2, be3}. Fix integers $d, g$ as in Theorem \ref{bingo} or Corollary \ref{cor}. 
Suppose that we have constructed two irreducible and generically smooth components $W_1, W_2$ of the Hilbert scheme of smooth space curves of degree $d$ 
and genus $g$.
Suppose also that we have proved the existence of $Y_1\in W_1$ and $Y_2\in W_2$ with $h^0(\Ii _{Y_2}(m-1)) =0$, $h^1(\Ii _{Y_1}(m)) =0$ and 
$h^1(N_{Y_i})=h^1(\Oo _{Y_i}(m-3)) =0$, $i=1, 2$. If $W_1=W_2$, then by the semicontinuity theorem for cohomology and Castelnuovo-Mumford regularity a general 
$X\in W_1$ satisfies $h^0(\Ii _X(m-1))=0$, $h^1(\Ii _X(t)) =0$ for all $t\ge m$ and $h^1(N_X)=0$. In particular a general element of $W_1$ has maximal rank.  
But we need to know that $W_1=W_2$. If $d\ge g+3$ it was not known at that time that the Hilbert scheme of smooth space curves of degree
$d$ and genus $g$ is irreducible (\cite{ein}), but it was obvious since at least Castelnuovo that its part parametrizing the non-special curves is irreducible 
(modulo the irreducibility of the moduli scheme $\mathcal {M}_g$ of genus $g$ smooth curves). When $d<g+3$, the Hilbert scheme of smooth space curves of 
degree $d$ and genus $g$ is often reducible, even in ranges with $d/g$ not small (\cite{d, i, k, kk1, kkl, kk2}). In \cite{be3} when $d\ge (g+2)/2$ we defined 
a certain irreducible component $Z(d,g)$ of the Hilbert scheme of smooth space curves of degree $d$ and genus $g$ and (under far stronger assumptions on $d, g$) 
we were able find $Y_1$ and $Y_2$ with $W_1=W_2=Z(d,g)$. Several pages of Section \ref{Sm} are devoted to solve this problem.

We work over an algebraically closed field $\KK$ of characteristic zero.

We thank the anonymous referee for useful comments.


\section{Preliminaries}\label{preliminaries}

\subsection{The curves $C_{t,k}$}

For each locally Cohen-Macaulay curve $C\subset \mathbb {P}^3$ the index of speciality $e({C})$ of $C$ is the maximal integer $e$ such that
$h^1(\mathcal {O} _C(e)) \ne 0$.

Fix an integer $s>0$. Let $C_s \subset \mathbb {P}^3$ be any curve fitting in an exact sequence
\begin{equation}\label{eqa6}
0 \to \mathcal {O} _{\mathbb {P}^3}(-s-1) \to (s+1)\mathcal {O} _{\mathbb {P}^3}(-s) \to \mathcal {I} _{C_s} \to 0
\end{equation}
Each $C_s$ is arithmetically Cohen-Macaulay and in particular $h^0(\mathcal {O} _{C_s})=1$. By taking the Hilbert function in (\ref{eqa6}) we
get $\deg (C_s) = s(s+1)/2$, $p_a(C_s) = s(s+1)(2s-5)/6 +1$ and $e(C_s) = s-3$. Hence $h^i(\mathcal {I}_{C_s}(s-1)) =0$, $i=0,1,2$. By taking $d:= \deg (C_s)$ we get $p_a(C_s) = 1+d(s-1)-\binom{s+2}{3} = G_A(d,s)$. 
The set of all curves fitting in (\ref{eqa6}) is an irreducible variety and its general member is smooth and connected.
Among them there are the stick-figures called $\mathbf{K}_s$ in \cite{fl1}, \cite{fl2} and \cite{bbem}.  We have $h^1(N_{C_s}(-2)) =0$ for all $C_s$
(\cite[Lemme 1]{Ellia}, see also \cite{e}). Unless otherwise
stated we only use smooth $C_s$. 

For any $t, k$ let $C_{t,k}:= C_t\sqcup C_k$ be the union of a smooth $C_t$ and a smooth $C_k$ with the only restriction that
they are disjoint. By definition each $C_{t,k}$ is smooth. Let $d_{t,k} := \deg (C_{t,k}) =t(t+1)/2 +k(k+1)/2$ and  
$g_{t,k}:=h^1(\Oo_{C_{t,k}}) = 2 +t(t+1)(2t-5)/6+k(k+1)(2k-5)/6$ for $t\ge k >0$.
If $t\ge k>0$ then we have
\begin{equation}\label{eq+bb+1}
(t+k-1)d_{t,k} + 2-g_{t,k} = \binom{t+k+2}{3}
\end{equation}
Since each connected component $A$ of $C_{t,k}$ satisfies $h^i(N_A(-2))=0$, $i=0,1$, we have $h^i(N_{C_{t,k}}(-2)) =0$, $i=0,1$.

\begin{lemma}\label{b1}
We have $h^i(\mathcal {I} _{C_{t,k}}(t+k-1)) =0$, $i=0,1,2$.
\end{lemma}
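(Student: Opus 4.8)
The plan is to exploit the disjointness of the two components through the exact sequence
\begin{equation*}
0 \to \Ii_{C_{t,k}} \to \Ii_{C_k} \to \Oo_{C_t} \to 0,
\end{equation*}
which is exact precisely because $C_t\cap C_k=\emptyset$: away from $C_t$ the first arrow is an isomorphism, while along $C_t$ (where $\Ii_{C_k}=\Oo_{\PP^3}$) it reduces to the structure sequence $0\to\Ii_{C_t}\to\Oo_{\PP^3}\to\Oo_{C_t}\to0$. Set $m:=t+k-1$ and twist by $m$. Since $C_k$ is arithmetically Cohen--Macaulay we have $h^1(\Ii_{C_k}(m))=0$ and $h^2(\Ii_{C_k}(m))=h^1(\Oo_{C_k}(m))$, the latter vanishing as $m>k-3=e(C_k)$; likewise $h^1(\Oo_{C_t}(m))=0$ because $m>t-3=e(C_t)$. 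The long exact cohomology sequence then yields $h^2(\Ii_{C_{t,k}}(m))=0$ at once, and identifies $H^0(\Ii_{C_{t,k}}(m))$ and $H^1(\Ii_{C_{t,k}}(m))$ with the kernel and cokernel of the restriction map $\alpha\colon H^0(\Ii_{C_k}(m))\to H^0(\Oo_{C_t}(m))$.

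Next I would compare the two dimensions. From $h^1(\Ii_{C_k}(m))=h^1(\Oo_{C_k}(m))=0$ one gets $h^0(\Ii_{C_k}(m))=\binom{m+3}{3}-(md_k+1-p_a(C_k))$, and from $h^1(\Oo_{C_t}(m))=0$ one gets $h^0(\Oo_{C_t}(m))=md_t+1-p_a(C_t)$. Equality of the two sides is equivalent to $\binom{m+3}{3}=md_{t,k}+2-(p_a(C_t)+p_a(C_k))$; since the explicit formula for $p_a(C_s)$ gives $p_a(C_t)+p_a(C_k)=g_{t,k}$, this is exactly identity (\ref{eq+bb+1}). Hence $\alpha$ is a map between spaces of equal dimension, so $\dim\ker\alpha=\dim\mathrm{coker}\,\alpha$, that is $h^0(\Ii_{C_{t,k}}(m))=h^1(\Ii_{C_{t,k}}(m))$, and it suffices to prove that $\alpha$ is bijective, equivalently that $h^1(\Ii_{C_{t,k}}(m))=0$.

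The vanishing of $h^1(\Ii_{C_{t,k}}(m))$ is the heart of the matter and the step I expect to be the main obstacle, since $m=t+k-1$ is the single twist at which both $h^0$ and $h^1$ are forced to vanish and where disjointness is essential (gluing a degree-$t$ surface through $C_t$ to a degree-$k$ surface through $C_k$ only produces degree $t+k$). My plan is to cut with a general plane $H$, giving the Castelnuovo sequence
\begin{equation*}
0 \to \Ii_{C_{t,k}}(m-1) \to \Ii_{C_{t,k}}(m) \to \Ii_{Z,H}(m) \to 0,
\end{equation*}
where $Z=(C_t\cap H)\sqcup(C_k\cap H)$ is a set of $d_{t,k}$ points of $H\cong\PP^2$. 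A direct count gives $\binom{m+2}{2}-d_{t,k}=tk\ge0$, so there is just enough room for $Z$ to impose independent conditions on plane curves of degree $m$, i.e. for $h^1(\Ii_{Z,H}(m))=0$. Because $C_t$ and $C_k$ are arithmetically Cohen--Macaulay, their general plane sections are points in uniform position whose postulation I can read off from the resolution (\ref{eqa6}); the crux is then to show that, for a general disjoint pair, the union $Z$ still imposes independent conditions in degree $m$, and to propagate this vanishing up to $C_{t,k}$ through the Castelnuovo sequence, using that $h^1(\Ii_{Z,H}(j))=0$ for all $j\ge m$ and that the restriction maps $H^0(\Ii_{C_{t,k}}(j))\to H^0(\Ii_{Z,H}(j))$ are surjective for $j>m$. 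I expect to secure the independent-conditions statement by specializing $C_t$ and $C_k$ to the stick figures $\mathbf{K}_t$ and $\mathbf{K}_k$, where $Z$ degenerates to a configuration whose Hilbert function is computable combinatorially, and then to transport the conclusion to the general member by the semicontinuity of cohomology over the irreducible family of disjoint pairs $(C_t,C_k)$.
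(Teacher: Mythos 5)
Your first two paragraphs are correct and are a genuinely different reduction from the paper's: the sequence $0 \to \Ii_{C_{t,k}} \to \Ii_{C_k} \to \Oo_{C_t} \to 0$ is exact by disjointness, it gives $h^2(\Ii_{C_{t,k}}(t+k-1))=0$ at once, and the dimension count via (\ref{eq+bb+1}) correctly shows $h^0(\Ii_{C_{t,k}}(t+k-1))=h^1(\Ii_{C_{t,k}}(t+k-1))$, so that only one vanishing remains. But that remaining vanishing is the entire content of the lemma, and your plan for it has a real gap: the propagation step fails as described. From the Castelnuovo sequence, knowing $h^1(\Ii_{Z,H}(j))=0$ for all $j\ge m$ only yields surjections $H^1(\Ii_{C_{t,k}}(j-1))\twoheadrightarrow H^1(\Ii_{C_{t,k}}(j))$, i.e.\ that $h^1(\Ii_{C_{t,k}}(j))$ is non-increasing for $j\ge m-1$; combined with Serre vanishing at $j\gg 0$ this says nothing about the value at $j=m$. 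The extra input you invoke --- surjectivity of $H^0(\Ii_{C_{t,k}}(j))\to H^0(\Ii_{Z,H}(j))$ for all $j>m$ --- would indeed finish the argument, but it is exactly equivalent (via the connecting map) to injectivity of $H^1(\Ii_{C_{t,k}}(j-1))\to H^1(\Ii_{C_{t,k}}(j))$, i.e.\ to a chain of statements containing the one being proved. Since $C_{t,k}$ is disconnected it is not ACM (indeed $h^1(\Ii_{C_{t,k}})=1$), so this surjectivity is not automatic, and it cannot be read off from the Hilbert function of the trace $Z$ alone, however the trace is computed. Finally, the stick-figure specialization plus semicontinuity would at best prove the statement for a \emph{general} disjoint pair, whereas the lemma is asserted, and later used (e.g.\ for $C_{t,k}$ constrained to meet a fixed quadric at prescribed points), for \emph{every} disjoint pair.

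The missing idea is to use disjointness Tor-theoretically rather than through plane sections. Since $C_t\cap C_k=\emptyset$ one has $\mathrm{Tor}^1_{\Oo_{\PP^3}}(\Ii_{C_t},\Ii_{C_k})=0$ and $\Ii_{C_t}\otimes\Ii_{C_k}=\Ii_{C_{t,k}}$, so tensoring (\ref{eqa6}) for $s=t$ with $\Ii_{C_k}(t+k-1)$ gives the exact sequence $0\to\Ii_{C_k}(k-2)\to(t+1)\Ii_{C_k}(k-1)\to\Ii_{C_{t,k}}(t+k-1)\to 0$; all three vanishings then follow from $h^0(\Ii_{C_k}(k-1))=0$ (by (\ref{eqa6}) with $s=k$), $h^1(\Ii_{C_k}(k-1))=h^1(\Ii_{C_k}(k-2))=0$ (ACM), and $h^2(\Ii_{C_k}(k-2))=h^1(\Oo_{C_k}(k-2))=0$ (since $e(C_k)=k-3$). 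This is the paper's proof: five lines, no generality assumption, no plane section, no degeneration. If you want to salvage your framework, the same Tor trick applied inside $H$ to the two disjoint plane sections proves your independent-conditions claim for $Z$, but you would still have to repair the propagation step; it is far simpler to run the tensor argument directly in $\PP^3$.
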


\begin{proof}
Since $C_t\cap C_k =\emptyset$, we have $Tor^{1}_{{\mathcal {O} _{\mathbb {P}^3}}}(\mathcal {I}_{C_t},\mathcal {I} _{C_k})=0$ and $\mathcal {I} _{C_t}\otimes \mathcal {I} _{C_k} = \mathcal {I}_{C_{t,k}}$.
Therefore tensoring (\ref{eqa6}) with $s:= t$ by $\mathcal {I} _{C_k}(t+k-1)$ we get
\begin{equation}\label{eqa7}
0 \to t\mathcal {I} _{C_k}(k-2) \to (t+1)\mathcal {I} _{C_k}(k-1)\to \mathcal {I} _{C_{t,k}}(t+k-1)\to 0
\end{equation}
We have $h^2(\mathcal {I} _{C_k}(k-2)) = h^1(\mathcal {O} _{C_k}(k-2))$ and the latter integer is zero, because $e(C_k)= k-3<k-2$. We have $h^1(\mathcal {I} _{C_k}(k-1))=0$, because $C_k$
is arithmetically Cohen-Macaulay. We have $h^0(\mathcal {I} _{C_k}(k-1)) =0$, by the case $s=k$ of (\ref{eqa6}). Hence $h^i(\mathcal {I} _{C_{t,k}}(t+k-1)) =0$, $i=0,1,2$.
\end{proof} 

\begin{remark}
In this paper we only need $k\in \{t-1,t\}$.
\end{remark}

\begin{remark}\label{b2}
We have $e(C_{t,k}) =\max \{e(C_t),e(C_k)\} = \max \{t-3,k-3\} \le t+k-4$. Recall that $d_{t,k}= \deg (C_{t,k})$.
If $s:= t+k$, then $d_{s-1,1} = (s^2-s+2)/2 \ge d_{t,k}$. If $s$ is even then $d_{t,k} \ge s(s+2)/4 =d_{\frac{s}{2},\frac{s}{2}}$. If $s$ is odd, then $d_{t,k} \ge (s+1)^2/4 = d_{\frac{s+1}{2},\frac{s-1}{2}}$.
\end{remark}

\begin{remark}\label{ob2.1} Let $X$ be a general smooth curve of genus $g$ and degree $d \ge g+3$ such that
 $h^1(\Oo _X(1)) =0$; if either $g\ge 26$ (\cite[p. 67, inequality $D_P(g) \le g+3$]{pe}) or $g\le 25$ and $d\ge g+14$ (\cite[p. 67]{pe}), then $h^1(N_X(-2)) =0$ 
(\cite{pe} uses the case $g=0$ done in \cite{ev}).
\end{remark}

\subsection{Smoothing}

We are going to apply standard smoothing techniques (see for instance \cite{hh} and \cite{s}).

\begin{lemma}\label{aaa1}
Fix $A\sqcup B $ with $A = C_t$ and $B = C_k$. Let $X$ be a nodal curve with $X = A\cup B\cup Y$, $Y$ a smooth curve of degree $d'\ge 2$ and genus $g'$,
$\sharp (A\cap Y)=1$, $\sharp (B\cap Y)=1$, $h^1(\Oo _Y(1)) =0$ and $h^1(N_Y(-2)) =0$. Then $h^1(N_X(-1))=0$ and $X$ is smoothable.
\end{lemma}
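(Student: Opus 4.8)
The plan is to establish the two assertions separately: first the cohomological vanishing $h^1(N_X(-1))=0$, and then the smoothability of $X$. Since $X$ is nodal it is a local complete intersection, so its normal sheaf $N_X$ is locally free of rank $2$; moreover $X=A\cup B\cup Y$ is connected, being a chain with $A$ and $B$ both attached to $Y$. Throughout I would exploit that for a component $Z$ meeting the rest of $X$ transversally along a reduced set of nodes $\Delta_Z$ there is a restriction sequence
\[
0 \to N_Z \to N_X|_Z \to \Oo_{\Delta_Z} \to 0,
\]
the standard local computation of the normal bundle at a node (the extra length one at each node records the in-plane smoothing direction $T^1_P$). Here $\Delta_A=\{P\}$, $\Delta_B=\{Q\}$ and $\Delta_Y=\{P,Q\}$, since $A\cap B=\emptyset$.

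For the vanishing I would first pass from the $(-2)$-twist to the twists actually needed. From $0\to N_Z(-2)\to N_Z(-1)\to N_Z(-1)|_{Z\cap H}\to 0$ for a general hyperplane $H$ (with skyscraper cokernel), the known vanishings $h^1(N_{C_t}(-2))=h^1(N_{C_k}(-2))=0$ and the hypothesis $h^1(N_Y(-2))=0$ give $h^1(N_A(-1))=h^1(N_B(-1))=h^1(N_Y(-1))=0$; choosing instead $H$ through $P$ one gets $h^1(N_A(-1)(-P))=0$, and symmetrically $h^1(N_B(-1)(-Q))=0$. Next I would peel $A$ and $B$ off $Y$ through the ideal sequence
\[
0 \to N_X(-1)|_A(-P)\oplus N_X(-1)|_B(-Q) \to N_X(-1) \to N_X(-1)|_Y \to 0,
\]
using $\mathcal I_{Y,X}\cong\Oo_A(-P)\oplus\Oo_B(-Q)$. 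Feeding the node sequences above (twisted by $-1$, and by $-P$ respectively $-Q$) into this, together with the point-vanishings just obtained and $h^1(\Oo_{\{P,Q\}})=0$, yields $h^1(N_X(-1)|_A(-P))=h^1(N_X(-1)|_B(-Q))=h^1(N_X(-1)|_Y)=0$, whence $h^1(N_X(-1))=0$ from the long exact cohomology sequence.

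Finally, the same hyperplane trick upgrades this to $h^1(N_X)=0$, so $X$ is an unobstructed point of the Hilbert scheme. For smoothability I would invoke the smoothing machinery of \cite{hh} and \cite{s}: $X$ is connected and nodal, each component $A=C_t$, $B=C_k$, $Y$ is smooth and smoothable (with $h^1(\Oo_Y(1))=0$ ensuring that $Y$, and hence the whole chain, is nonspecial in degree one and deforms to a smooth connected curve of the expected degree and genus), while $h^1(N_X)=0$ guarantees that the map from first-order deformations of $X$ onto the local smoothings $\bigoplus_P T^1_P$ of the nodes is surjective. Thus all nodes can be smoothed simultaneously and $X$ lies in the closure of the locus of smooth connected curves; since $h^1(N_X(-1))=0$ is an open condition, a general smoothing inherits it. I expect the step requiring the most care to be pinning down the precise form of the node/restriction sequences (the asymmetric twist by $\Oo(\Delta_Z)$ in a single normal direction) and verifying the hypotheses of the cited smoothing result, rather than the cohomology bookkeeping, which is routine once those sequences are in place.
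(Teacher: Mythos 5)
Your proof of the vanishing $h^1(N_X(-1))=0$ is correct, and it is a legitimate variant of the paper's argument: the paper uses the Mayer--Vietoris sequence $0 \to N_X(-1) \to N_X(-1)|_{A\cup B}\oplus N_X(-1)|_Y \to N_X(-1)|_{\{P,Q\}} \to 0$ and must prove that $H^0(N_X(-1)|_Y)\to H^0(N_X(-1)|_{\{P,Q\}})$ is surjective, which it does via a plane through the two nodes together with $h^1(N_X(-2)|_Y)=0$; you instead use the ideal-sheaf sequence determined by $\Ii _{Y,X}\cong \Oo _A(-P)\oplus \Oo _B(-Q)$, so no surjectivity statement is needed, at the price of the point-twisted vanishings $h^1(N_A(-1)(-P))=h^1(N_B(-1)(-Q))=0$, which you correctly extract from $h^1(N_{C_t}(-2))=h^1(N_{C_k}(-2))=0$ (\cite[Lemme 1]{Ellia}) by choosing the plane through $P$ (resp.\ $Q$). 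Both routes rest on the same two tools, the elementary-transformation sequences at the nodes and the hyperplane trick, so this part is fine.

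The gap is in the smoothability step. You claim that $h^1(N_X)=0$ guarantees that the map from first-order deformations of $X$ onto the local smoothings $\bigoplus_P T^1_P$ is surjective. That is not a valid deduction. Setting $N'_X:=\ker (N_X\to T^1_X)$ (the equisingular normal sheaf), the relevant sequence is $0\to N'_X\to N_X\to T^1_X\to 0$, whose cohomology sequence reads $H^0(N_X)\to H^0(T^1_X)\to H^1(N'_X)\to H^1(N_X)\to 0$; hence when $h^1(N_X)=0$ the map $H^0(N_X)\to H^0(T^1_X)$ is surjective \emph{if and only if} $h^1(N'_X)=0$, and it is $H^1(N'_X)$, not $H^1(N_X)$, that carries the obstruction to smoothing the nodes. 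The criteria in \cite{hh} and \cite{s} are stated accordingly, and this is precisely why the paper does not argue as you do but instead quotes \cite[Corollary 1.2]{fl1}: a connected nodal curve with $h^1(N_X(-1))=0$ is smoothable. Fortunately, what you have already proved repairs the gap: taking a general plane $H$ through the two nodes, one has inclusions $N_X(-1)\cong N_X(-X\cap H)\subseteq N_X\otimes \Ii _{\{P,Q\},X}\subseteq N'_X$ with skyscraper quotients, so $h^1(N_X(-1))=0$ forces $h^1(N'_X)=0$, and then unobstructedness plus surjectivity onto $H^0(T^1_X)$ smooth all the nodes simultaneously. Either insert this argument or simply cite \cite[Corollary 1.2]{fl1} as the paper does.
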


\begin{proof}
Set $C:= A\cup B$. Write $\{p_1\} =A\cap Y$ and $\{p_2\} = B\cap Y$. We have an exact sequence
\begin{equation}\label{eqsss1}
0 \to N_X(-1) \to N_X(-1)|_C \oplus N_X(-1)|_Y \to N_X(-1)|_{\{p_1,p_2\}} \to 0
\end{equation}
Since $N_X(-1)|_C$ is obtained from $N_C(-1)$ by making two positive elementary transformations and $h^1(N_C(-1)) =0$, we have $h^1(N_X(-1)|_C) =0$.
Since $N_X(-2)|_Y$ is obtained from $N_Y(-2)$ by making two positive elementary transformations and $h^1(N_Y(-2)) =0$, we have $h^1(N_X(-2)|_Y) =0$. 
Let $H\subset \PP^3$ be a general plane containing $\{p_1,p_2\}$. Since $Y$ is not a line, $Y\cap H$ is a zero-dimensional scheme.
Since $h^1(N_X(-2)|_Y)=0$, the restriction map 
$$H^0(Y,N_X(-1)|_Y) \to H^0(Y\cap H,N_X(-1)|_{H\cap Y})$$ 
is surjective. Since $\{p_1,p_2\} \subseteq Y\cap H$, the restriction map
$H^0(Y\cap H,N_X(-1)|_{H\cap Y}) \to H^0(\{p_1,p_2\},N_X(-1)|_{\{p_1,p_2\}})$ 
is surjective. Hence the restriction map  
$$H^0(Y,N_X(-1)|_Y) \to H^0(\{p_1,p_2\},N_X(-1)|_{\{p_1,p_2\}})$$ 
is surjective. From (\ref{eqsss1}) we get $h^1(N_X(-1)) =0$.

Since $h^1(N_X(-1)) =0$, $X$ is smoothable (\cite[Corollary 1.2]{fl1}).
\end{proof}

Call $U(t,k,d',g')$ the set of all curves $X = A\cup B\cup Y$ appearing in Lemma \ref{aaa1}. For all integer $y\ge 0$ and $x\ge y+3$ the Hilbert scheme of smooth space curves of degree $x$ and genus $y$ is irreducible (\cite{ein, ein1}). By Lemma \ref{aaa1} there is a unique irreducible component 
$W(t,k,d',g')$ of the Hilbert scheme of $\PP^3$ containing the curve $X$ of Lemma \ref{aaa1}.
A general $C\in W(t,k,d',g')$ is smooth and $h^1(N_C(-1)) =0$. We have $\deg ({C}) = d'+\deg (C_t)+\deg (C_k) = d'+t(t+1)/2 + k(k+1)/2$
and genus $g({C}) = g' +p_a(C_t)+p_a(C_k) = g'-2 +t(t+1)(2t-5)/6 + k(k+1)(2k-5)/6$.


\section{Assertion $M(s,t,k)$, $k\in \{t-1,t\}$}

For any $t\ge 27$, set $c(2t+1,t,t) =t+3$, $d(2t+1,t,t) =0$, $c(2t,t,t-1) =t+2$ and $d(2t,t,t-1) = t-1$. Set $g(t+k+1,t,k):= c(t+k+1,t,k) -3$. 
Note that if $k\in \{t-1,t\}$ we have
\begin{equation}\label{eq+a1}
t(t+1) +k(k+1) +d(t+k+1,t,k) = (t+k)(t+k+4-c(t+k+1,t,k))
\end{equation}

Now fix an integer $s\ge t+k+3$ with $s-t-k-1\equiv 0 \pmod{2}$ and define the integers $c(s,t,k)$, $g(s,t,k)$ and $d(s,t,k)$ in the following way. Set
$$ c(s,t,k) := \lfloor \frac{ \binom{s+3}{3} -sd_{t,k} - 6-3(s-t-k-1)/2+g_{t,k}}{s-1}\rfloor,$$
$$ d(s,t,k) := \binom{s+3}{3} -sd_{t,k} -6 -3(s-t-k-1)/2 +g_{t,k}-(s-1)c(s,t,k),$$ and $g(s,t,k):= c(s,t,k) - 3 -3(s-t-k-1)/2$. Note that
\begin{equation}\label{eq+a2}
s(d_{t,k} +c(s,t,k)) + 3-g_{t,k} -g(s,t,k) +d(s,t,k) = \binom{s+3}{3}, \ 0 \le d(s,t,k) \le s-2
\end{equation}
and (\ref{eq+a2}) holds even if $s=t+k+1$. From (\ref{eq+a2}) for the integers $s+2$ and $s$  
and the equality $g(s+2,t,k) -g(s,t,k) = c(s+2,t,k) -c(s,t,k) -3$ we get
\begin{align}\label{eq+a3}
& 2d_{t,k} + 2c(s,t,k) + (s+1)(c(s+2,t,k)-c(s,t,k)) + \notag\\
& d(s+2,t,k) -d(s,t,k) +3 = (s+3)^2
\end{align}

\begin{remark}\label{aa+0}
We have $c(2t+1,t,t) =t+3$, $d(2t+1,t,t)=0$, $c(2t,t,t-1) =t+2$, $d(2t,t,t-1) = t-1$, $c(2t+2,t,t-1)=2t+6$, $d(2t+2,t,t-1) = 2t-3$, 
$c(2t+3,t,t) = 2t+7$, $d(2t+3,t,t) =2t-1$.
\end{remark}
 
\begin{remark}\label{o+aa1}
We explain here the main reason for the assumption $t\ge 27$ made in this section. Fix an integer $s\ge t+k+1$ with $s\equiv t+k+1 \pmod{2}$. We work with a curve $X = C_{t,k}\sqcup A$ with $A$ a general smooth curve of degree $c(s,t,k)$ and genus $g(s,t,k)$ and we need $h^1(N_X(-2)) =0$, i.e. we need $h^1(N_A(-2)) =0$. We have $c(s,t,k) \ge g(s,t,k)+3$. By Lemma \ref{aa+1} below we have 
$g(s,t,k) \ge g(t+k+1,t,k)$. We have $g(2t+1,t,t)=t \ge 27$ and $g(2t,t,t-1) =t-1 \ge 26$. Since $g(s,t,k)\ge 26$, Remark \ref{ob2.1} gives $h^1(N_A(-2)) =0$.
\end{remark}

\begin{lemma}\label{r+00}
For all integer $s\ge t+k+1$ with $s\equiv t+k-1 \pmod{2}$ and $t\ge 27$ we have $g_{\lceil (s+1)/2\rceil, \lfloor (s+1)/2\rfloor} >g_{t,k} +g(s,t,k)$.
\end{lemma}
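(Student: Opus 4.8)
The plan is to turn this genus inequality into a comparison of degrees, and then into an elementary monotonicity statement about balanced partitions. Set $a:=\lceil (s+1)/2\rceil$ and $b:=\lfloor (s+1)/2\rfloor$, so that $a+b=s+1$ and $(a,b)$ is the balanced partition of $s+1$, and put $D:=d_{t,k}+c(s,t,k)$ and $\sigma:=s-t-k-1$ (a nonnegative even integer by hypothesis). Applying (\ref{eq+bb+1}) to the pair $(a,b)$ gives $\binom{s+3}{3}=s\,d_{a,b}+2-g_{a,b}$, and combining this with (\ref{eq+a2}) I obtain the exact identity
\[
g_{a,b}-g_{t,k}-g(s,t,k)=s\,(d_{a,b}-D)-1-d(s,t,k).
\]
Since $0\le d(s,t,k)\le s-2$ by (\ref{eq+a2}), it is enough to prove the strict degree inequality $d_{a,b}>D$: as both numbers are integers this forces $d_{a,b}-D\ge 1$, whence the right-hand side is at least $s-1-(s-2)=1>0$, which is exactly the assertion.

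To establish $d_{a,b}>D$ I would make $c(s,t,k)$ explicit. Writing $c(s,t,k)=(N-d(s,t,k))/(s-1)$ with $N:=\binom{s+3}{3}-s\,d_{t,k}-6-\tfrac32\sigma+g_{t,k}$, and substituting $\binom{s+3}{3}=s\,d_{a,b}+2-g_{a,b}$ once more, a direct computation gives
\[
(s-1)(d_{a,b}-D)=\bigl(g_{a,b}-d_{a,b}\bigr)-\bigl(g_{t,k}-d_{t,k}\bigr)+4+\tfrac32\sigma+d(s,t,k).
\]
Thus $d_{a,b}>D$ follows as soon as $\bigl(d_{a,b}-g_{a,b}\bigr)-\bigl(d_{t,k}-g_{t,k}\bigr)<4+\tfrac32\sigma$, where I have simply dropped the nonnegative term $d(s,t,k)$. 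I would then introduce $f(n):=d_n-p_a(C_n)=\tfrac13 n(n+1)(4-n)-1$, so that $d_{t,k}-g_{t,k}=f(t)+f(k)$ and $d_{a,b}-g_{a,b}=f(a)+f(b)$, reducing the target to $f(a)+f(b)-f(t)-f(k)<4+\tfrac32\sigma$.

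Here is the crux. Both $(t,k)$ (because $k\in\{t-1,t\}$) and $(a,b)$ are balanced partitions, of $t+k$ and of $s+1$ respectively, and $s+1>t+k$. Setting $\phi(m):=f(\lceil m/2\rceil)+f(\lfloor m/2\rfloor)$, a one-line computation yields $\phi(m+1)-\phi(m)=f(p+1)-f(p)=-(p+1)(p-2)$ with $p=\lfloor m/2\rfloor$, which is strictly negative throughout the relevant range because $t\ge 27$ forces $p\ge 3$. Hence $\phi$ is strictly decreasing there, so $f(a)+f(b)=\phi(s+1)<\phi(t+k)=f(t)+f(k)$; the left-hand side above is therefore negative while the right-hand side is nonnegative, and the required inequality holds \emph{a fortiori}.

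The computations in the first two steps are routine; the points requiring care are the choice of equations to combine — it is (\ref{eq+bb+1}) applied to the balanced pair $(a,b)$ that eliminates $\binom{s+3}{3}$ — and the treatment of the floor hidden in $c(s,t,k)$ through its remainder $d(s,t,k)$. I expect the genuine content to be concentrated in the reduction to $d_{a,b}>D$ and in the observation that the bound $d(s,t,k)\le s-2$ provides exactly the slack needed; once the problem is rephrased through $f$ and $\phi$ the conclusion is immediate, and the hypothesis $t\ge 27$ enters only to guarantee $p\ge 3$ so that $\phi$ is monotone.
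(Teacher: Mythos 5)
Your proof is correct, and it takes a genuinely different route from the paper's. The paper argues by induction on $s$: the base case $s=t+k+1$ is read off from the explicit values in Remark \ref{aa+0}, and the inductive step reduces to the inequality $g_{\lceil (s+1)/2\rceil,\lfloor (s+1)/2\rfloor}-g_{\lceil (s-1)/2\rceil,\lfloor (s-1)/2\rfloor}\ge c(s,t,k)-c(s-2,t,k)-3$, which is justified only by the qualitative remark that the left side is quadratic in $s$ while, by (\ref{eq+a3}), the right side is linear. Your argument is instead direct and fully explicit: applying (\ref{eq+bb+1}) to the balanced pair $(a,b)$ eliminates $\binom{s+3}{3}$ and converts the genus inequality into the degree inequality $d_{a,b}>d_{t,k}+c(s,t,k)$, with the slack $0\le d(s,t,k)\le s-2$ from (\ref{eq+a2}) supplying exactly the needed margin; that degree inequality then follows from the strict decrease of $\phi(m)=f(\lceil m/2\rceil)+f(\lfloor m/2\rfloor)$ with $f(n)=\tfrac13 n(n+1)(4-n)-1$. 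Both of your exact identities and the computation $\phi(m+1)-\phi(m)=-(p+1)(p-2)$, $p=\lfloor m/2\rfloor$, check out. Your route buys three things: it replaces the numerical step the paper leaves to the reader by a transparent monotonicity statement; it yields the sharper integral inequality $d_{a,b}\ge d_{t,k}+c(s,t,k)+1$, which is precisely what the paper later extracts from this lemma when proving Lemma \ref{aa+1}, so that proof could be streamlined; and it makes visible that the hypothesis $t\ge 27$ enters only to force $\lfloor m/2\rfloor\ge 3$ in the relevant range. One caveat to address in a polished write-up: for $s=t+k+1$ the numbers $c(s,t,k)$ and $d(s,t,k)$ are defined ad hoc (Remark \ref{aa+0}), not by the floor formula, so your relation $(s-1)c(s,t,k)=N-d(s,t,k)$ is not definitional in that case; it does hold, because the paper asserts that (\ref{eq+a2}) is valid at $s=t+k+1$ and $g(t+k+1,t,k)=c(t+k+1,t,k)-3$ agrees with $g(s,t,k)=c(s,t,k)-3-\tfrac32\sigma$ at $\sigma=0$, but you should either say this or derive your second identity from your first one together with that relation, which bypasses the floor formula entirely and treats all $s\ge t+k+1$ uniformly.
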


\begin{proof}
The lemma is true if $s =t+k+1$ by the explicit value of $g(t+k+1,t,k) =c(t+k+1,t,k)-3$ (Remark \ref{aa+0}).  
Now let $s\ge t+k+3$ and assume that the lemma is true for the integer $s-2$. Since $s-2 \ge t+k+1$ the inductive assumption
gives $g_{\lceil (s-1)/2\rceil, \lfloor (s-1)/2\rfloor}  > g_{t,k} +g(s-2,t,k)$. Thus it is sufficient to check that 
$g_{\lceil (s+1)/2\rceil, \lfloor (s+1)/2\rfloor}- g_{\lceil (s-1)/2\rceil, \lfloor (s-1)/2\rfloor} \ge g(s,t,k)-g(s-2,t,k)= c(s,t,k)-c(s-2,t,k) -3$. 
An elementary numerical computation shows that this inequality holds for any $s > t\ge 27$: indeed, the key point is that the difference 
$g_{\lceil (s+1)/2\rceil, \lfloor (s+1)/2\rfloor}- g_{\lceil (s-1)/2\rceil, \lfloor (s-1)/2\rfloor}$ is quadratic in $s$ by definition of $g_{t,k}$, while the difference $c(s,t,k)-c(s-2,t,k)$ is linear in $s$ by (\ref{eq+a3}).
\end{proof}

\begin{lemma}\label{aa+1}
For each $s\ge t+k+1$ with $s\equiv t+k-1 \pmod{2}$ we have $2(c(s+2,t,k)-c(s,t,k)) \ge s+4$.
\end{lemma}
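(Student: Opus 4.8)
The plan is to turn the claim into a single polynomial inequality in $s$, by combining the recursion (\ref{eq+a3}) with the crude bounds $0\le d(s',t,k)\le s'-2$ that hold for every admissible $s'$ by (\ref{eq+a2}). Put $D(s):=d_{t,k}+c(s,t,k)$ and $\Delta c:=c(s+2,t,k)-c(s,t,k)$, so that the target reads $2\Delta c\ge s+4$. Solving (\ref{eq+a3}) for $(s+1)\Delta c$ and using $d(s+2,t,k)\le s$ together with $d(s,t,k)\ge 0$ gives
\[
(s+1)\Delta c \;=\; (s+3)^2-3-2D(s)-\bigl(d(s+2,t,k)-d(s,t,k)\bigr)\;\ge\; s^2+5s+6-2D(s).
\]
Since $s+1>0$, the inequality $2\Delta c\ge s+4$ is equivalent to $2(s+1)\Delta c\ge(s+1)(s+4)$, and by the previous line it suffices to establish the clean bound $4D(s)\le s^2+5s+8$.

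Next I would estimate $D(s)$ from above. Combining (\ref{eq+a2}) with the definition $g(s,t,k)=c(s,t,k)-3-3(s-t-k-1)/2$ and discarding the nonnegative remainder $d(s,t,k)$ yields
\[
(s-1)D(s)\;\le\;\binom{s+3}{3}-6+g_{t,k}-d_{t,k}-\tfrac{3}{2}(s-t-k-1).
\]
Substituting this into $4D(s)\le s^2+5s+8$, clearing the positive factor $s-1$, and expanding with $g_{t,k}=2+\tfrac{t(t+1)(2t-5)}{6}+\tfrac{k(k+1)(2k-5)}{6}$ and $d_{t,k}=\tfrac{t(t+1)}{2}+\tfrac{k(k+1)}{2}$, the desired estimate becomes the explicit inequality
\[
s^3+5s\;\ge\;6+4t(t+1)(t-4)+4k(k+1)(k-4)+18t+18k.
\]

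Finally I would exploit that the right-hand side is independent of $s$ while the left-hand side is strictly increasing in $s$: because the congruence $s\equiv t+k-1\pmod 2$ forces $s\ge t+k+1$, it is enough to check the inequality at $s=t+k+1$. For $k=t$ this is $s=2t+1$ and the inequality reduces to $36t^2+12t\ge 0$, while for $k=t-1$ it is $s=2t$ and reduces to $36t^2-30t+12\ge 0$; both are manifestly true (the second quadratic has negative discriminant). The single delicate point, and the only genuine obstacle, is that in these reductions the leading cubic terms cancel exactly — since $t,k$ are each about $s/2$ one has $4t^3+4k^3\approx s^3$ — so the whole statement hinges on the sign of the surviving quadratic terms; the work lies entirely in carrying the lower-order contributions accurately rather than in any structural difficulty.
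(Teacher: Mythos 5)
Your proof is correct, and it takes a genuinely different route from the paper's. The paper argues by comparison with the balanced extremal curves: Lemma \ref{r+00} gives $g_{t,k}+g(s,t,k)<g_{t',k'}$ for $t'=\lceil (s+1)/2\rceil$, $k'=\lfloor (s+1)/2\rfloor$, whence (\ref{eq+a2}) and (\ref{eqi1}) force $d_{t',k'}\ge d_{t,k}+c(s,t,k)$; feeding this into (\ref{eq+a3}) together with (\ref{eq+bb+1}) yields the slightly stronger conclusion $c(s+2,t,k)-c(s,t,k)\ge \lfloor (s+1)/2\rfloor+2$, i.e.\ $2(c(s+2,t,k)-c(s,t,k))\ge s+5$ when $s$ is odd. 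You instead eliminate the comparison curve altogether: from (\ref{eq+a3}) and the bounds on $d(s,t,k)$, $d(s+2,t,k)$ you reduce the claim to $4(d_{t,k}+c(s,t,k))\le s^2+5s+8$, then bound $(s-1)(d_{t,k}+c(s,t,k))$ directly from (\ref{eq+a2}) and the identity $g(s,t,k)=c(s,t,k)-3-\tfrac32(s-t-k-1)$, arriving at the explicit cubic inequality $s^3+5s\ge 6+4t(t+1)(t-4)+4k(k+1)(k-4)+18t+18k$, which by monotonicity in $s$ need only be checked at $s=t+k+1$. I verified each reduction and the two base cases ($36t^2+12t\ge 0$ for $k=t$, $36t^2-30t+12>0$ for $k=t-1$); the bookkeeping is exact, including the use of $d(s+2,t,k)\le s$ rather than $s-2$. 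As for what each approach buys: the paper's argument is conceptual (the degree $d_{t,k}+c(s,t,k)$ of the construction cannot exceed the degree of the balanced maximal-genus curve with the same congruence class), it reuses Lemma \ref{r+00}, which is needed elsewhere anyway, it gives a marginally sharper estimate, and the same template recurs in the proofs of Lemmas \ref{+n2} and \ref{+n3}; your computation is self-contained and more elementary, requiring neither Lemma \ref{r+00} nor the standing hypothesis $t\ge 27$, but, as you correctly flag, it lives or dies on carrying the lower-order terms exactly, since the cubic terms cancel identically at $s=t+k+1$ and only a quadratic margin survives.
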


\begin{proof}
Since $g_{t,k} +g(s,t,k) <g_{\lceil (s+1)/2\rceil, \lfloor (s+1)/2\rfloor}$ (Lemma \ref{r+00}), (\ref{eq+a2}) for $s, t, k$ and (\ref{eqi1}) for $t'=\lceil (s+1)/2\rceil$ and $k'=\lfloor (s+1)/2\rfloor$ imply $d_{t',k'} \ge c(s,t,k)+d_{t,k}$. Remark  \ref{aa+0} gives $c(s+2,t',k') =k'+3$. Since $0\le d(s+2,t,k) \le s$ and 
$0\le d(s,t,k) \le s-2$, (\ref{eq+a3}) and the difference between (\ref{eq+a2}) for $s':= s+2$ and (\ref{eq+bb+1}) for $t', k'$ imply 
$c(s+2,t,k)-c(s,t,k) \ge -1+c(s+2,t',k') =\lfloor (s+1)/2\rfloor +2$.
\end{proof}

Let $Q := \PP^1\times \PP^1$. The elements of $|\Oo _Q(0,1)|$ are the fibers of the projection $\pi _2:Q \to \PP^1$, 
so that each $D\in |\Oo _Q(1,0)|$ contains exactly one point of each fiber of $\pi _2$.

\quad {\bf{Assertion}} $M(s,t,k)$, $k\in \{t-1,t\}$, $s\ge t+k+1$, $s\equiv t+k+1 \pmod{2}$: Set $e=1$ if $0\le d(s,t,k) \le c(s+2,t,k)-c(s,t,k) -3$ and $e=2$ 
if $d(s,t,k) > c(s+2,t,k)-c(s,t,k)-3$. There is a $6$-tuple
 $(X,Q,D_1,D_2,S_1,S_2)$ such that
 \begin{itemize}
 \item[(a)] $Q$ is a smooth quadric surface, $X = C_{t,k}\sqcup Y$, $Y$ is a smooth curve of degree $c(s,t,k)$ and genus $g(s,t,k)$
 and $Q$ intersects transversally
 $X$, with no line of $Q$ containing $\ge 2$ points of $X\cap Q$;
 \item[(b)] $D_1,D_2$ are different elements of $|\Oo _Q(1,0)|$, each of them containing one point of $Y\cap Q$, $S_i\subset D_i\setminus D_i\cap Y$, $1\le i \le 2$,
 and $\sharp (S_1)+\sharp (S_2) =d(s,t,k)$; $\pi _2(S_2) \subseteq \pi _2(S_1)$;
$S_2=\emptyset$ and $\pi _2(S_1)\subseteq \pi _2(Y\cap (Q\setminus (D_1\cup D_2)))$ if $e=1$, $\sharp (S_2) = d(s,t,k) -c(s+2,t,k)+c(s,t,k) +3$  and
$\pi _2(S_2)\subseteq \pi _2(Y\cap (Q\setminus (D_1\cup D_2)))$ if $e=2$;
 \item[({c})] $h^i(\Ii _{X\cup S_1\cup S_2}(s)) =0$, $i=0,1$.
 \end{itemize} 
 
\begin{remark}\label{+a2.0}
Fix lines $L, R\subset \PP^3$ such that $L\cap R=\emptyset$ and $o\in \PP^3\setminus (L\cup R)$. Let $\ell : \PP^3\setminus \{o\} \to \PP^2$ denote the linear projection
from $o$. We have $\sharp (\ell (L)\cap \ell ({R}))=1$, i.e. there is a unique line $D(L,R,o)\subset \PP^3$ such that $o\in D(L,R,o)$, $D(L,R,o)\cap L\ne \emptyset$
and $D(L,R,o)\cap R\ne \emptyset$. We have $\sharp (D(L,R,o)\cap L) =\sharp (D(L,R,o)\cap R) =1$. The function $(L,R,o)\mapsto D(L,R,o)$ is regular.
\end{remark}

\begin{remark}\label{notation}
For any $o\in \PP^3$ let $\chi(o)$ denote the first infinitesimal neighbourhood of $o$ in $\PP^3$, i.e. the closed subscheme of $\PP^3$ with $\mathcal{I}_o^2$ as its ideal sheaf. 
For any surface $F \subset \PP^3$ and any scheme $B \subset \PP^3$ let $\Res_F(B)$ denote the closed subscheme of $\PP^3$ with $\mathcal{I}_B : \mathcal{I}_F$ as its 
ideal sheaf. We have $\Res_F(B) \subseteq B$. If $B$ is the disjoint union of closed subschemes $B_1$ and $B_2$ then $\Res_F(B)=\Res_F(B_1) \cup \Res_F(B_2)$. If $B$ is 
reduced then $\Res_F(B)$ is the union of the irreducible components of $B$ not contained in $F$. If $o \notin F$ then $\Res_F(\chi(o))=\chi(o)$. If $o \in F$ and $F$ is smooth 
at $o$ then $\Res_F(\chi(o))=\{o\}$.
\end{remark}

\begin{lemma}\label{+a1}
For all $t \ge 27$ and $k\in \{t-1,t\}$ assertion $M(t+k+1,t,k)$ is true.
\end{lemma}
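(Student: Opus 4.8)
The plan is to reduce condition (c) to a single vanishing statement, establish that statement for a degeneration of $Y$ onto the quadric, and then transport everything to a transverse configuration by semicontinuity; conditions (a) and (b) are arranged by genericity. Throughout set $s=t+k+1$, $c=c(s,t,k)$, $g=g(s,t,k)$, and note that by Remark \ref{aa+0} we are in the case $e=1$, so $S_2=\emptyset$ and $\sharp (S_1)=d(s,t,k)$ (namely $0$ if $k=t$ and $t-1$ if $k=t-1$). First I would record the numerology. A direct computation from (\ref{eq+a2}) gives $\chi (\Oo _{X\cup S_1}(s)) =\binom{s+3}{3}$, where $X=C_{t,k}\sqcup Y$. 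Since $s>\max \{e(C_t),e(C_k),e(Y)\}$ we have $h^1(\Oo _{X\cup S_1}(s))=0$, and as $h^i(\Oo _{\PP^3}(s))=0$ for $i=1,2$ this forces $h^2(\Ii _{X\cup S_1}(s))=h^3(\Ii _{X\cup S_1}(s))=0$; hence $\chi (\Ii _{X\cup S_1}(s)) =h^0(\Ii _{X\cup S_1}(s))-h^1(\Ii _{X\cup S_1}(s))=0$. Therefore it suffices to prove the single vanishing $h^0(\Ii _{X\cup S_1}(s))=0$, the equality $h^0=h^1$ then yielding (c).

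Next I would realise $Y$ first as a divisor $Y_0\subset Q$. Because $c=g+3$ in the base case, the bidegree $(c-2,2)=(g+1,2)$ produces curves of degree $c$ and arithmetic genus $(c-3)=g$, so a general $Y_0\in |\Oo _Q(c-2,2)|$ is smooth, connected, of degree $c$ and genus $g$. Place $C_{t,k}$ generically, so that it is disjoint from $Y_0$, meets $Q$ transversally in $2d_{t,k}$ points, and no line of either ruling of $Q$ carries two of them; choose $D_1,D_2\in |\Oo _Q(1,0)|$ and $S_1\subset D_1$ (with the projection behaviour of (b)) disjoint from $Y_0$. Since $Y_0\subset Q$ and $S_1\subset Q$, we have $\Res _Q(C_{t,k}\cup Y_0\cup S_1)=C_{t,k}$, and the Castelnuovo restriction sequence reads
\[
0\to \Ii _{C_{t,k}}(s-2)\to \Ii _{C_{t,k}\cup Y_0\cup S_1}(s)\to \Ii _{(C_{t,k}\cap Q)\cup Y_0\cup S_1,\,Q}(s,s)\to 0 .
\]
Because $s-2=t+k-1$, Lemma \ref{b1} gives $h^0(\Ii _{C_{t,k}}(s-2))=0$, so $h^0$ of the middle term injects into $h^0$ of the trace term. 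Pulling the divisor $Y_0$ out of the trace (legitimate since $C_{t,k}\cap Q$ and $S_1$ avoid $Y_0$) identifies the trace term with $\Ii _{(C_{t,k}\cap Q)\cup S_1,\,Q}(a,b)$ for $(a,b)=(s,s)-(c-2,2)$; the balancing in (\ref{eq+a2}) is exactly the statement that $(a+1)(b+1)=\sharp (C_{t,k}\cap Q)+\sharp (S_1)$. Hence the whole problem collapses to showing that the finite set $W:=(C_{t,k}\cap Q)\cup S_1$ imposes independent conditions on $|\Oo _Q(a,b)|$.

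This last step is the main obstacle, and it is precisely what (a) and (b) are designed to make tractable. I would prove it by a second application of the Horace method, now internal to $Q\cong \PP^1\times \PP^1$: peel off the rulings of $|\Oo _Q(0,1)|$ one at a time via
\[
0\to \Ii _{\Res _R(W),\,Q}(a,b-1)\to \Ii _{W,\,Q}(a,b)\to \Ii _{W\cap R,\,R}(a)\to 0,\qquad R\in |\Oo _Q(0,1)| ,
\]
where $\Oo _Q(a,b)|_R=\Oo _{\PP^1}(a)$, on which any $\le a+1$ distinct points impose independent conditions. The hypothesis in (a) that no line of $Q$ contains two points of $X\cap Q$, together with the prescription in (b) for $\pi _2(S_1)$ (all of $S_1$ lying on the single $(1,0)$–curve $D_1$, hence meeting each $(0,1)$–ruling at most once), guarantees that $W$ can be swept fibre by fibre with each successive trace $W\cap R$ of the correct cardinality and each residual count staying balanced. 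Verifying that every partial trace and residual is nonspecial is the delicate bookkeeping; once it is in place the long exact sequences give $h^0(\Ii _{W,Q}(a,b))=0$, and therefore $h^0(\Ii _{C_{t,k}\cup Y_0\cup S_1}(s))=0$.

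Finally I would deform $Y_0$ to a smooth curve $Y$ of the same degree and genus meeting $Q$ transversally and disjoint from $C_{t,k}$. Since $c=g+3$, the Hilbert scheme of such curves is irreducible by \cite{ein}, so $Y_0$ lies in its closure and the family $C_{t,k}\cup Y_\lambda \cup S_1$ can be taken flat; upper semicontinuity of $h^0$ keeps $h^0(\Ii _{C_{t,k}\cup Y\cup S_1}(s))=0$, whence (c) for the transverse configuration by the $\chi$–computation of the first paragraph. For general such $Y$ the transversality and ``no two points on a line'' requirements of (a) hold, and $D_1,D_2$ together with $S_1$ obeying (b) can be selected among the finitely many general configurations on $Q$, completing the $6$–tuple $(X,Q,D_1,D_2,S_1,S_2)$ with $S_2=\emptyset$.
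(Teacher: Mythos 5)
Your reduction of condition (c) to the single vanishing $h^0(\Ii _{X\cup S_1}(s))=0$ via the Euler--characteristic balance is correct, and for $k=t$ (where $d(2t+1,t,t)=0$, hence $S_1=S_2=\emptyset$) your argument is essentially the paper's own: specialize $Y$ to a divisor of bidegree $(t+1,2)$ on $Q$, kill the residual with Lemma \ref{b1}, kill the trace using the generality of $C_{t,k}\cap Q$, then deform off $Q$ and invoke semicontinuity. In that case condition (b) is harmless, because with $S_1=S_2=\emptyset$ the lines $D_1,D_2$ can be chosen \emph{after} $Y$ is fixed, with no cohomological requirement attached to them.

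The gap is in the case $k=t-1$, where $\sharp (S_1)=d(2t,t,t-1)=t-1>0$, and it is exactly the point on which the paper spends almost all of its proof. Condition (b) is not a genericity condition: it forces $D_1,D_2$ to pass through points of $Y\cap Q$, and each point of $S_1$ to lie on the fiber of $\pi _2$ through a point of $Y\cap (Q\setminus (D_1\cup D_2))$. Hence, once a transversal $Y$ is fixed, the admissible sets $S_1$ form a \emph{finite} set determined by $Y\cap Q$; they are special, not general, positions on $D_1$. Your special-fibre computation is made with $Y_0\subset Q$ (where the incidences of (b) are vacuous, since $Y_0\cap Q$ is not finite) and with $S_1$ chosen freely on $D_1$; semicontinuity then yields the vanishing for a general deformation $(Y_\lambda ,S_1)$ with $S_1$ held fixed, and such a pair will not satisfy (b). Semicontinuity transfers vanishing from the special to the general member of a family, which is the wrong direction to reach the (b)-constrained configurations. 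What is needed --- and what the paper's step (b) constructs --- is a flat family in which $Y$, $D_1$, $D_2$ and $S_1$ move \emph{together} so that the incidences of (b) hold identically along the family, degenerating to a computable configuration: the paper degenerates $Y$ itself to the connected nodal curve $L_1\cup L_2\cup R_1\cup \cdots \cup R_t$ of bidegree $(2,t)$, marks the points $w_i=R_i\cap M_i$, and performs a two-stage deformation and smoothing $A(\lambda )$, $B_\delta$ in which the set $E_1(\delta )$ (the future $S_1$) is cut out on $D_1$ by the ruling lines $M_i(\delta )$ through moving points $s_i(\delta )\in B_\delta \cap Q$, so that semicontinuity is applied to the moving scheme $C_{t,k}\cup B_\delta \cup E_1(\delta )$ rather than to a fixed $S_1$. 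Your closing claim that $S_1$ ``obeying (b) can be selected among the finitely many general configurations on $Q$'' assumes precisely what must be proved: without an argument that (c) holds for at least one of those finitely many $Y$-constrained choices, the proof of $M(2t,t,t-1)$ is incomplete.
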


\begin{proof}
Fix $C_{t,k}$ intersecting $Q$ at $2d_{t,k}$ general points (\cite{pe}).

\quad (a) Assume $k=t$. We have $c(2t+1,t,t) =t+3$ and $d(2t+1,t,t)=0$ and so we take $e=1$ with $S_1 =S_2 =\emptyset$. Take any $A\in |\Oo _Q(2,t+1)|$ with $A\cap C_{t,k}=\emptyset$. We have $\Res _Q(C_{t,t}\cup A) =C_{t,t}$ and thus $h^i(\Ii _{\Res _Q(C_{t,t}\cup A)}(2t-1)) =0$, $i=0,1$. We have $h^i(Q,\Ii _{Q\cap (C\cap A)}(2t+1,2t+1))
= h^i(Q,\Ii _{C_{t,t}\cap Q}(2t-1,t)) =0$, $i=0,1$, by (\ref{eq+a1}) and the generality of $C_{t,k}\cap Q$. Hence $h^i(\Ii _{C_{t,k}\cup A}(2t+1)) =0$, $i=0,1$.

We deform $A$ to a curve $Y$ of degree $t+3$ and genus $t$ with $Y\cap C_{t,k}=\emptyset$, $Y$ intersecting transversally $Q$ and with no line of $Q$ containing $\ge 2$
points of $Q\cap (C_{t,k}\cup Y)$. By the semicontinuity theorem for cohomology (\cite[III.8.8]{hart}), 
for a general $Y$ we have $h^i(\Ii _{C_{t,k}\cup Y}(2t+1)) =0$, $i=0,1$. Set $X:= C_{t,k}\cup Y$, $S_1=S_2=\emptyset$
and take as $D_1$ and $D_2$ any two different elements of $|\Oo _Q(1,0)|$, each of them containing one point of $Y\cap Q$.

\quad (b) Assume $k =t-1$. We have $c(2t,t,t-1) =t+2$, $d(2t,t,t-1) = t-1$ and $c(2t+2,t,t-1) -c(2t,t,t-1)=t+4$ (Remark \ref{aa+0}). Hence $e=1$.  However, in the proof of $M(t+k+1,t,k)$ we will exchange the two rulings (as we will do below for the general proof that $M(s,t,k)\Longrightarrow M(s+2,t,k)$), so that $D_1,D_2\in |\Oo _Q(0,1)|$. Take lines $L_1,L_2\in |\Oo _Q(1,0)|$
such that $L_1\ne L_2$ and $C_{t,t-1}\cap (L_1 \cup L_2) =\emptyset$, and $t$ different lines $R_j\in |\Oo _Q(0,1)|$, $1\le j\le t$, none of them containing
a point of $C_{t,t-1}\cap Q$. Fix $D_1,D_2\in   |\Oo _Q(0,1)|$ containing no point of $C_{t,t-1}\cap Q$ and with $D_h\ne R_j$ for all $h, j$. Set $u_h:= L_1\cap D_h$, $h=1,2$. 
Fix $E_1\subset D_1$ with $\sharp (E_1) =t-1$ and $E_1\cap (L_1\cup L_2) =\emptyset$. We have $h^1(Q,\Ii _{E_1}(2t-2,t)) =0$. Since $C_{t,k}\cap Q$ is a general
subset of $Q$ with cardinality $2d_{t,k}$, we
have $h^i(Q,\Ii _{Q\cap (C\cap A)\cup E_1}(2t,2t))
= h^i(Q,\Ii _{(C_{t,t}\cap Q)\cup E_1}(2t-2,t)) =0$, $i=0,1$, by (\ref{eq+a1}). The residual sequence of $Q$ gives $h^i(\Ii _{C_{t,k}\cup A\cup E_1}(2t)) =0$, $i=0,1$.

Take an ordering $\{o_1,\dots ,o_{t-1}\}$ of $E_1$ and let $M_i$ the only
element of $|\Oo _Q(1,0)|$ with $o_i\in M_i$. Set $w_i:= R_i\cap M_i$, $1\le i\le t-1$. We fix a deformation $\{L_h(\lambda )\}_{\lambda \in \Lambda}$, $h=1,2$, of $L_h$ with the following properties: $\Lambda$ is a connected and affine smooth curve, $o\in \Lambda$, $L_h(o) =L_h$, $u_h\in L_h(\lambda)$ for all $\lambda$, $L_1(\lambda )\cap L_2(\lambda) =\emptyset$ for all $\lambda$ and $L_h(\lambda )$ is transversal to $Q$ for all $\lambda \ne o$. For each $i$ with $1\le i\le t-1$ there is a unique line $R_i(\lambda)$
containing $w_i$ and intersecting both $L_1(\lambda )$ and $L_2(\lambda)$ (Remark \ref{+a2.0}). There is a deformation $\{R_t(\lambda)\}_{\lambda \in \Lambda}$ of $R_t$ with $R_t(o) =R_t$, $R_t(\lambda )$
intersecting both $L_1(\lambda )$ and $L_2(\lambda )$. Taking instead of $\Lambda$ a smaller neighborhood of $o$ we may assume $R_i(\lambda )\cap R_j(\lambda )=\emptyset $ for all $i\ne j$ and all $\lambda$ so that
 $A(\lambda ):= L_1(\lambda)\cup L_2(\lambda)\cup R_1(\lambda )\cup \cdots \cup R_t(\lambda)$ is a connected nodal curve of degree $t+2$ and arithmetic genus $t-1$. By semicontinuity (restricting if necessary $\Lambda$ to a neighborhood of $o$) we have $h^i(\Ii _{C_{t,k}\cup A(\lambda )\cup E_1}(2t)) =0$, $i=0,1$, for all $\lambda \in \Lambda$.
 Fix $\lambda _0\in \Lambda \setminus \{o\}$. Let $\{B_\delta\}_{\delta \in \Delta}$ be a smoothing of $A(\lambda _0)$ fixing $u_1$ and $u_2$, i.e. take a smooth and connected affine curve $\Delta$
 and $a\in \Delta$ with $B_a = A(\lambda _0)$, $B_\delta$ a smooth curve of degree $t+2$ and genus $t-1$ and $\{u_1,u_2\}\subset B_\delta$ for all $\delta$. Restricting if necessary $\Delta$ we may assume
 that $B_\delta$ is transversal to $Q$ and disjoint from $C_{t,k}\cup E_1$ for all $\delta \in \Delta$ and (by semicontinuity) that  $h^i(\Ii _{C_{t,k}\cup B_\delta \cup E_1}(2t)) =0$, $i=0,1$. Since $A(\lambda _0)$ is transversal to $Q$, we may (up to a finite covering of $\Delta$) find $t-1$ sections $s_1,\dots ,s_{t-1}$ of the family $\{B_\delta \cap Q\}_{\delta \in \Delta}$ of $2t+4$ ordered points of $Q$ with
 $s_i(a) = w_i$, $i=1,\dots ,t-1$. Let $M_j(\delta)$, $\delta \in \Delta$, be the  only
element of $|\Oo _Q(1,0)|$ with $w_i\in M_i(\delta )$. Set $o_i(\delta ):= L_1\cap M_i(\delta )$ and $E_1(\delta ):= \{o_1(\delta ),\dots ,o_{t-1}(\delta )\}$. By semicontinuity for a general $\delta \in \Delta \setminus \{a\}$ we have $h^i(\Ii _{C_{t,k}\cup B_\delta \cup E_1(\delta )}(2t)) =0$. We fix such a $\delta$ and set
$X:= C_{t,k}\cup B_\delta$, $S_1:= E_1(\delta )$, $S_2:=\emptyset$. For $M(2t,t,t-1)$ we use the lines $D_1$, $D_2$ and $M_j(\delta )$, $1\le j\le t-1$.
\end{proof}

\begin{lemma}\label{+na1}
For each integer $s\ge t+k+1$ such that $s\equiv t+k+1 \pmod{2}$ we have $2c(s,t,k) \ge s+4$ and $2c(s,t,k) \ge s+6$ is $s\ge t+k+3$.
\end{lemma}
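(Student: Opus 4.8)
The plan is to derive both inequalities from a single induction on $s$ in steps of $2$, respecting the parity $s \equiv t+k+1 \equiv t+k-1 \pmod{2}$ (so that Lemma \ref{aa+1} applies to exactly the same set of $s$), using Lemma \ref{aa+1} as the engine for the inductive step and Remark \ref{aa+0} for the base case. The real content sits in Lemma \ref{aa+1}; once the estimate $2(c(s+2,t,k)-c(s,t,k)) \ge s+4$ is available, the present statement is a purely arithmetic consequence.

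First I would settle the base case $s = t+k+1$ directly from the explicit values recorded in Remark \ref{aa+0}. When $k=t$ we have $s = 2t+1$ and $c(2t+1,t,t) = t+3$, so $2c(s,t,k) = 2t+6 \ge 2t+5 = s+4$. When $k = t-1$ we have $s = 2t$ and $c(2t,t,t-1) = t+2$, so $2c(s,t,k) = 2t+4 = s+4$. In both cases the first inequality holds at $s = t+k+1$ (with equality when $k=t-1$), while the second inequality is vacuous there since $s = t+k+1 < t+k+3$.

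For the inductive step of the first inequality, assume $2c(s,t,k) \ge s+4$ for some admissible $s \ge t+k+1$. Writing $2c(s+2,t,k) = 2c(s,t,k) + 2\bigl(c(s+2,t,k)-c(s,t,k)\bigr)$ and combining the inductive hypothesis with Lemma \ref{aa+1} gives $2c(s+2,t,k) \ge (s+4)+(s+4) = 2s+8 \ge (s+2)+4$, which is the desired bound for $s+2$. This establishes $2c(s,t,k) \ge s+4$ for every admissible $s \ge t+k+1$; note that the same computation in fact yields the sharper $2c(s,t,k) \ge 2s+4$ whenever $s \ge t+k+3$.

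The second inequality is then immediate: for $s \ge t+k+3$ I apply Lemma \ref{aa+1} with $s$ replaced by $s-2$ to get $2\bigl(c(s,t,k)-c(s-2,t,k)\bigr) \ge s+2$, and combine it with the first inequality for $s-2$, namely $2c(s-2,t,k) \ge s+2$, obtaining $2c(s,t,k) \ge (s+2)+(s+2) = 2s+4 \ge s+6$, where the last step uses $s \ge 2$ (which holds a fortiori since $t \ge 27$). The only genuinely non-routine ingredient is Lemma \ref{aa+1}, so here no separate obstacle arises; the sole care required is the correct reading off of the base-case values from Remark \ref{aa+0} and the parity bookkeeping that lets Lemma \ref{aa+1} be invoked.
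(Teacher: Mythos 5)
Your proposal is correct and follows exactly the paper's own argument: the base case $s=t+k+1$ from the explicit values in Remark \ref{aa+0}, and the inductive step $s-2\Longrightarrow s$ via Lemma \ref{aa+1}. You simply spell out the arithmetic (including the parity bookkeeping and the sharper bound $2c(s,t,k)\ge 2s+4$ for $s\ge t+k+3$) that the paper leaves implicit.
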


\begin{proof}
The case $s =t+k+1$ is true by Remark \ref{aa+0}. The general case follows by induction $s-2\Longrightarrow s$ by Lemma \ref{aa+1}.
\end{proof}

We need the following auxiliary result, proved in \cite[Lemma 2.5]{ccg} and \cite[bottom of page 176]{hh0}.

\begin{lemma}\label{t1.1}
Fix lines $D, L\subset \PP^3$ such that $D\cap L$ is a point $o$ and $q\in L\setminus \{o\}$. Then there is a family $\{L_\lambda\}_{\lambda\in \KK}$ of lines of $\PP^3$
such that $L_0 = L$, $L_\lambda \cap D =\emptyset$ for all $t\ne 0$, $D\cup L\cup \chi (o)$ is a flat limit of the family $\{D\cup L_\lambda\}_{\lambda \in \KK \setminus \{0\}}$.
\end{lemma}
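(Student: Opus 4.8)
The plan is to reduce everything to an explicit local computation near $o$, since the assertion concerns only lines and a flat limit, and away from $o$ the family $D\cup L_\lambda$ degenerates to $D\cup L$ with no surprises. First I would choose affine coordinates $(x,y,z)$ on a chart containing $o$ so that $o$ is the origin, $D$ is the $x$-axis (ideal $(y,z)$) and $L$ is the $y$-axis (ideal $(x,z)$); after an automorphism of $\PP^3$ fixing these two lines I may also normalize $q=(0,1,0)$. Both $D$ and $L$ then lie in the plane $\{z=0\}$ that they span. I would build the family by tilting $L$ out of this plane while keeping it anchored at $q$: take $L_\lambda$ to be the line with ideal $(x,\,z-\lambda(y-1))$, i.e. the line through $q=(0,1,0)$ with direction $(0,1,\lambda)$. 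Then $L_0=L$, every $L_\lambda$ passes through $q$, and $L_\lambda\subseteq\{x=0\}$, a plane meeting $D$ only at $o$; since $o\in L_\lambda$ precisely when $\lambda=0$, this yields $L_\lambda\cap D=\emptyset$ for every $\lambda\neq0$, as required.

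The heart of the matter is the identification of the flat limit. For $\lambda\neq0$ the lines $D$ and $L_\lambda$ are disjoint, so the ideals $(y,z)$ and $(x,z-\lambda(y-1))$ are comaximal (their sum contains $z-\lambda(y-1)\equiv\lambda$ modulo $(y,z)$, a unit), and hence the ideal of the fibre $D\cup L_\lambda$ is the product
\begin{equation*}
\Ii_\lambda=\bigl(xy,\; yz-\lambda y^2+\lambda y,\; xz,\; z^2-\lambda yz+\lambda z\bigr).
\end{equation*}
I would then pass to the limit by setting $\lambda=0$ in these generators, getting $J=(xy,xz,yz,z^2)$. A direct check shows $J$ is exactly the ideal of $D\cup L\cup\chi(o)$: we have $J\subseteq(xy,z)=\Ii_{D\cup L}$, and the quotient $(xy,z)/J$ is spanned over $\KK$ by the class of $z$ alone (since $xy\in J$ and $(x,y,z)\cdot z=(xz,yz,z^2)\subseteq J$), a single nonzero element killed by the maximal ideal. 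Thus $V(J)$ is $D\cup L$ together with one embedded point at $o$ whose extra tangent direction is $\partial_z$, transverse to the plane spanned by $D$ and $L$; this is precisely $D\cup L\cup\chi(o)$.

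It remains to confirm that this naive limit is genuinely the flat limit, and this is the step I expect to be the real obstacle. Viewing the four generators as an ideal $\mathcal G\subset\KK[x,y,z,\lambda]$, the flat limit ideal is $(\mathcal G:\lambda^\infty)|_{\lambda=0}$, which contains $J=\mathcal G|_{\lambda=0}$; hence the flat limit is a \emph{subscheme} of $V(J)$. Now I would close the argument by a Hilbert-polynomial count: two disjoint lines have $\chi(\Oo)=2$, so the flat limit also has $\chi(\Oo)=2$, while by the length computation above $V(J)=D\cup L\cup\chi(o)$ has $\chi(\Oo)=\chi(\Oo_{D\cup L})+1=1+1=2$. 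Equal Hilbert polynomials for a subscheme inclusion force equality, so the flat limit coincides with $D\cup L\cup\chi(o)$. The delicate points to verify carefully are exactly that $\Ii_\lambda$ is the \emph{full} ideal of the fibre (not merely contained in it) for all $\lambda\neq0$ — which the comaximality and product-equals-intersection identity guarantee — and that no further $\lambda$-saturation enlarges $J$; the $\chi$-bookkeeping is what ultimately pins the embedded structure to be exactly the single point $\chi(o)$ and nothing more.
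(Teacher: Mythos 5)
Your proposal is correct and takes essentially the same route as the paper: the paper also normalizes coordinates (with $o=(1:0:0:0)$, $q=(0:0:1:0)$), moves $L$ in the pencil of lines through $q$ inside a plane containing $L$ but not $D$, namely $L_\lambda=\{x_1+\lambda x_0=x_3=0\}$, writes down the product ideal of $D\cup L_\lambda$, observes that setting $\lambda=0$ yields exactly the four quadrics $x_1^2,x_1x_2,x_1x_3,x_2x_3$ cutting out $D\cup L\cup\chi(o)$, and concludes flatness from constancy of the Hilbert polynomial (citing \cite[III.9.8.4]{hart}). The one point to tighten is that you should carry out the ideal computation with homogeneous ideals (as the paper does): your inclusion \emph{flat limit} $\subseteq V(J)$ is proved only in the affine chart, whereas the Hilbert-polynomial count that forces equality concerns the projective schemes, so as written it does not formally exclude an embedded point appearing along the plane at infinity; homogenizing your generators removes this issue with no other change to the argument.
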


\begin{proof}
Take homogenous coordinates $x_0,x_1,x_2,x_3$ such that $o = (1:0:0:0)$, $D =\{x_1=x_2=0\}$, $L = \{x_1=x_3=0\}$ and $q = (0:0:1:0)$. Take
$L_\lambda = \{x_1+\lambda x_0=x_3=0\}$. Note that $L_0=L$ and that $L \cap D =\emptyset$ for all $\lambda\ne 0$. Set $Y_\lambda:= D\cup L_\lambda$. For $\lambda \ne 0$ the ideal sheaf of scheme $Y_t$
is generated by the quadrics $x_1(x_1+\lambda x_0)$, $x_2(x_1+\lambda x_0)$, $x_1x_3$, $x_2x_3$, while the ideal sheaf of the scheme $Y_0$ is determined by the quadrics $x_1^2$, $x_1x_2$, $x_1x_3$ and $x_2x_3$. This algebraic family of projective schemes is flat because it has constant Hilbert polynomial \cite[III.9.8.4]{hart}.
\end{proof}

\begin{lemma}\label{+a2}
Assume $t\ge 27$ and $k\in \{t-1,t\}$. Fix an integer $s\ge t+k+1$ such that $s\equiv t+k+1 \pmod{2}$. If $M(s,t,k)$ is true, then $M(s+2,t,k)$ is true.
\end{lemma}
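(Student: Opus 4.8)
The plan is to prove $M(s+2,t,k)$ by the induction $s \Longrightarrow s+2$ announced in the statement, the input being the $6$-tuple $(X,Q,D_1,D_2,S_1,S_2)$ furnished by $M(s,t,k)$. The engine is the degree-$2$ residual (Castelnuovo) exact sequence for the smooth quadric $Q$: for any closed subscheme $Z\subset \PP^3$ one has
\[
0 \to \Ii_{\Res_Q(Z)}(s) \to \Ii_Z(s+2) \to \Ii_{Z\cap Q,\,Q}(s+2,s+2) \to 0,
\]
where $\Oo_{\PP^3}(s+2)|_Q = \Oo_Q(s+2,s+2)$ because $\Ii_Q\cong \Oo_{\PP^3}(-2)$. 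Thus to obtain $h^i(\Ii_Z(s+2))=0$, $i=0,1$, it suffices to produce a $Z$ adapted to $s+2$ with (i) $h^i(\Ii_{\Res_Q(Z)}(s))=0$ and (ii) $h^i(Q,\Ii_{Z\cap Q,\,Q}(s+2,s+2))=0$. I would build $Z$ as a union of $C_{t,k}$, the curve $Y$, the two rulings $D_1,D_2$, and a controlled number of reduced points and first-order neighbourhoods lying on $Q$, arranged so that $\Res_Q(Z)$ is (a flat degeneration of) the scheme $X\cup S_1\cup S_2$ of $M(s,t,k)$. By Remark \ref{notation} the curves transversal to $Q$ (namely $C_{t,k}$ and $Y$) are unchanged by $\Res_Q$, a ruling $D_i\subset Q$ disappears, a point of $Q$ disappears, while an embedded point $\chi(o)$ with $o\in Q$ residuates to the reduced point $o$. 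Choosing the $\chi(o)$ so that the points $o$ reproduce $S_1\cup S_2$, part (i) becomes precisely hypothesis $M(s,t,k)$, i.e.\ (c) for $s$.

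The heart is (ii), a cohomology computation on $Q\cong \PP^1\times\PP^1$ for the finite scheme $Z\cap Q$ against $\Oo_Q(s+2,s+2)$, where $h^0(\Oo_Q(s+2,s+2))=(s+3)^2$. The numerology is prepared by (\ref{eq+a3}): the $2d_{t,k}$ general points of $C_{t,k}\cap Q$, the $2c(s,t,k)$ points of $Y\cap Q$, the points contributed by the $c(s+2,t,k)-c(s,t,k)$ newly attached lines and by $S_1,S_2$, and the new $d(s+2,t,k)$ points add up to exactly $(s+3)^2$, so vanishing of both $h^0$ and $h^1$ on $Q$ amounts to these points imposing independent conditions and spanning. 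I would prove this by the standard filtration of $\Oo_Q(a,b)$ by the fibres of $\pi_2$, reducing to a sequence of $\PP^1$ estimates; the placement conditions in (b) — that $D_1,D_2\in|\Oo_Q(1,0)|$ after exchanging the two rulings (as in the proof of Lemma \ref{+a1}), that no ruling contains two points of $X\cap Q$, and the nesting $\pi_2(S_2)\subseteq\pi_2(S_1)$ together with the case split $e=1$ versus $e=2$ — are exactly what guarantee that on each fibre the prescribed points stay in general enough position for the $\PP^1$ computation to close up. The split $e=1/e=2$ records whether $d(s,t,k)$ is small enough to be absorbed inside one ruling or must spill over into the second.

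Finally I would pass from the special, reducible-and-embedded scheme $Z$ to the required smooth datum for $s+2$. Attaching the $c(s+2,t,k)-c(s,t,k)$ lines to $Y$ raises the degree by that amount and, with an appropriate number of nodes, the genus by $c(s+2,t,k)-c(s,t,k)-3$, which is exactly $g(s+2,t,k)-g(s,t,k)$; the tool is Lemma \ref{t1.1}, which degenerates a pair of disjoint lines $D\cup L_\lambda$ to $D\cup L\cup\chi(o)$ and thereby legitimizes trading the floating lines used in the count for attached lines plus the embedded points $\chi(o)$ that residuate to the $S_i$. By the semicontinuity theorem the vanishing $h^i(\Ii_Z(s+2))=0$ established for the limit persists for a general nearby member $C_{t,k}\sqcup Y'$ with $Y'$ smooth of degree $c(s+2,t,k)$ and genus $g(s+2,t,k)$; I would then read off the new $D_1',D_2'$ in the opposite ruling and the new $S_1',S_2'$ (their cardinalities dictated by the recomputed $e$ and by $d(s+2,t,k)$) and verify (a)–(b) directly. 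The main obstacle is step (ii) together with this last bookkeeping: one must choose the positions on $Q$ so that simultaneously $h^0$ and $h^1$ vanish on the quadric, $\Res_Q(Z)$ is exactly the $M(s,t,k)$ scheme, and the resulting flat limit is genuinely smoothable to a $C_{t,k}\sqcup Y'$ with $Y'$ smooth of the prescribed invariants (and with $h^1(N_{Y'}(-2))=0$, guaranteed via Remark \ref{ob2.1} since $g(s+2,t,k)\ge 26$). Reconciling all three demands at once is the delicate part, and it is precisely why the intricate conditions (b) are imposed.
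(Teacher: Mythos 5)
Your overall skeleton is the same as the paper's: both proofs attach to $X$ a configuration $J$ of lines in the two rulings of $Q$ together with the double points $\chi(o)$, $o\in S_1\cup S_2$, use Lemma \ref{t1.1} to realize $Y\cup J\cup \chi$ as a flat limit of nodal curves of degree $c(s+2,t,k)$ and genus $g(s+2,t,k)$, apply the residual sequence with respect to $Q$ so that the residual scheme is exactly $X\cup S_1\cup S_2$ (whence part (i) is hypothesis ({c}) of $M(s,t,k)$), and finish by smoothing, semicontinuity, and exchanging the two rulings. The gap is in your step (ii), which is the heart of the matter.

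You propose to prove $h^i(Q,\Ii _{Z\cap Q}(s+2,s+2))=0$, $i=0,1$, by a fibre-by-fibre $\PP^1$ argument, claiming that the placement conditions of $M(s,t,k)$ (transversality, no two points of $X\cap Q$ on a ruling, the nesting $\pi _2(S_2)\subseteq \pi _2(S_1)$) keep the points general enough on each fibre. This cannot work as stated. After removing $J$, the relevant scheme consists of roughly $2d_{t,k}+2c(s,t,k)$ points cut out on $Q$ by the curves $C_{t,k}$ and $Y$ (plus the new set $A$), against a bidegree roughly $(s-1,\,s/2+3)$, and by the exact count coming from (\ref{eq+a3}) the cardinality \emph{equals} $h^0$ of that bidegree: one needs bijectivity of the evaluation map, i.e., genuinely general position. ``No two points on a ruling'' is far weaker: such points could all lie on a curve of low bidegree in $Q$ and then fail to impose independent conditions. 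The paper's proof supplies the missing idea precisely here: since $h^1(N_X(-2))=0$ (Remark \ref{o+aa1} --- this is the reason for the hypothesis $t\ge 27$), one may deform $X$ in $\PP^3$ keeping fixed the finitely many points of $X\cap Q$ that must remain on $J$, so that all remaining points of $X\cap Q$ become \emph{general} points of $Q$; separately one checks $h^1(Q,\Ii _A(s-1,s+5+c(s,t,k)-c(s+2,t,k)))=0$ because $A$ lies on at most two rulings with $\sharp (A_1)\le s$. You cite the normal-bundle vanishing only at the very end (for smoothability of $Y'$), not where it is actually indispensable. A secondary, lesser gap: producing the new $6$-tuple satisfying condition (b) at level $s+2$ is not a matter of ``reading off'' $D_1',D_2',S_1',S_2'$ and invoking semicontinuity; the paper's steps (a1)--(a2) must move the attached lines off $Q$ in families pinned at points of $Y$ (Remark \ref{+a2.0}) and follow them with sections of the smoothing family, up to a finite cover of the base, in order to recreate the nested sets on rulings through points of the smoothed curve.
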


\begin{proof}
Let $e\in \{1,2\}$ be the integer arising in $M(s,t,k)$ and $f\in \{1,2\}$ the corresponding integer for $M(s+2,t,k)$.  Take $(X,Q,D_1,D_2,S_1,S_2)$ satisfying $M(s,t,k)$ with $X = C_{t,k}\sqcup Y$ and $D_1,D_2\in |\Oo _Q(1,0)|$. The $6$-tuple $(X',Q,D'_1,D'_2,S'_1,S'_2)$ will be a solution after exchanging the two rulings of $Q$, i.e. we will take $D'_1,D'_2\in |\Oo _Q(0,1)|$
and we use $\pi _1$ instead of $\pi _2$. In each step with $d(s,t,k)\ne 0$ we obtain $X'$ smoothing a curve $W$ union of $X$, $\chi:= \cup _{o\in S_1\cup S_2} \chi (o)$, $e+1$ elements $|\Oo _Q(1,0)|$ and $c(s+2,t,k)-c(s,t,k)-e-1$ elements of $|\Oo _Q(0,1)|$. See step ({c}) for the easier case $d(s,t,k)=0$ (here to get $W$ we add to $X$ a line $D_0\in |\Oo _Q(1,0)|$ and $c(s+2,t,k)-c(s,t,k)-1$  elements of $|\Oo _Q(0,1)|$).

\quad (a) Assume $e=2$ and set $z:= d(s,t,k) +3 -c(s+2,t,k) +c(s,t,k)$. Since $d(s,t,k) \le s-2$, Lemma \ref{aa+1} gives $d(s,t,k) \le 2(c(s+2,t,k) -c(s,t,k)-3)$, i.e.
$z \le c(s+2,t,k) -c(s,t,k)-3$. By assumption there is $E\subset Y\cap (Q\setminus (D_1\cup D_2))$ such that $\sharp (E)=z$ and $\pi_2(E)=
\pi _2(S_2) \subseteq \pi _2(S_1)$. Take a line $D_0\in |\Oo _Q(1,0)|$ different from $D_1,D_2$,  with $D_0\cap E =\emptyset$, $D_0\cap C_{t,k}\cap Q =\emptyset$ and $D_0\cap Y\cap Q\ne \emptyset$; we use
that $2c(s,t,k) \ge 3+z$ (Lemma \ref{+na1}).  Take distinct lines $L_i\in |\Oo _Q(0,1)|$, $1\le i \le c(s+2,t,k) -c(s,t,k)-3$, such that $L_i\cap Y \ne \emptyset$
 if and only if $i\le z$, $X\cap (\bigcup _{i=1}^{c(s+2,t,k)-c(s,t,k) -3} L_i )=E$, $L_i\cap (C_{t,k}\cap Q) =\emptyset$ for all $i$. Set $J:= (D_0\cup D_1\cup D_2) \cup (\bigcup _{i=1}^{c(s+2,t,k)-c(s,t,k) -3} L_i)$. We fix $f$ general lines $R_i\in |\Oo _Q(0,1)|$, $1\le i \le f$, and $A_i\subset R_i$, $1\le i\le f$, with the
 conditions $\sum _{i=1}^{f} \sharp (A_i) =b(s+2,t,k)$,  $\pi _1(A_f)\subseteq \pi _1(A_1) $ and $\pi _1(A_f)\subseteq \pi _1(Y\cap (Q\setminus J))$. Set $\chi := \cup _{o\in S_1\cup S_2} \chi (o)$, $A:= A_1\cup A_2$ and $W:= X\cup J \cup \chi$. 
 
 \quad \emph{Claim 1:} There is an affine connected smooth curve $\Delta$ such that the scheme $Y\cup J\cup \chi$ is a flat degeneration of a family of unions of $Y\cup D_0\cup D_1\cup D_2$
 and $c(s+2,t,k) -c(s,t,k)-3$ lines $L_{i\lambda}$, $\lambda \in \Delta$,  such that $L_{i\lambda} \cap Y \ne \emptyset$
 if and only if $i\le z$, $D_0\cap L_{i\lambda} =D_0\cap L_i$ for all $i$, $D_1\cap L_{i\lambda}  =\emptyset$ for all $\lambda \in \Delta$ and all $i$, and $D_2\cap L_{i\lambda}\ne \emptyset$ (and it is a point) if and only if $z+1 \le i \le c(s+2,t,k)-c(s,t,k)-3$.  
 
 \quad \emph{Proof of Claim 1:} As $\Delta$ we take a suitable curve contained in the affine manifold $\prod _{i=1}^{c(s+2,t,k) -c(s,t,k)-3} \Delta _i$, which we are now going to define 
(in the use of Claim 1 we only need that $\Delta$ is irreducible, so we could use a similar claim, but with this connected manifold instead of the irreducible curve $\Delta$). 
Each $\Delta _i$ is a smooth connected curve, so $\prod _{i=1}^{c(s+2,t,k) -c(s,t,k)-3} \Delta _i$ is irreducible.
We describe each $L_{i\lambda}$ with $L_i$ as a limit separately for each $i$; $\Delta _i$ is the parameter space for the line $L_{i\lambda}$. We need to modify the proof of Lemma \ref{t1.1} in the following way. First assume $z< i \le c(s+2,t,k)  -c(s,t,k)-3$. In this case we fix the point $q_i:= D_0\cap L_i$ and use as $\Delta _i$ a Zariski open neighborhood of $D_2\cap L_i$; for each $q\in D_2$ there is a unique line $L(q_i,q)$ containing $\{q_i,q\}$; when $q$ goes
to $D_2\cap L_i$ the line $L(q_i,q)$ goes to the line; we need to restrict $\Delta _i$ to avoid the points $q$ such that $L(q_i,q)\cap (Y\cup C_{t,k} \cup D_1)\ne \emptyset$. Now assume
$i\le z$. We fix the point $q_i:= Y\cap L_i$ and take as $\Delta _i$ a Zariski neighborhood of $q_i$ in $Y$; since $q_i\notin D_0$ for each $q\in D_0\setminus Y\cap D_0$ there
is a unique line $L(q_i,q)$ containing $\{q_i,q\}$; we need to restrict $\Delta _i$  to avoid the points $q$ such that $L(q_i,q)\cap (Y\cup C_{t,k})\ne \{q\}$. We restrict $\Delta _1\times
\cdots \times \Delta _{c(s+2,t,k) -c(s,t,k)-3}$ to a non-empty Zariski open subset $U$ such that for all $\lambda \in U$ the union $U'$ of $Y\cup D_0\cup D_1\cup D_2$
and the line $L_{i\lambda}$, $1\le i \le c(s+2,t,k) -c(s,t,k)-3$ is nodal and it has no singular point which is not prescribed by the construction. Note that $U'$ is connected
and $p_a(U') = g(s+2,t,k)$.
 
 \quad \emph{Claim 2:} $W$ is a flat degeneration of a disjoint union of $C_{t,k}$ and a smooth curve of degree $c(s+2,t,k)$ and genus $g(s+2,t,k)$.
 
 \quad \emph{Proof of Claim 2:} Since $C_{t,k}\cap (J\cap \chi) =C_{t,k}\cap Y = \emptyset$, it is sufficient to prove that $Y\cup J\cup \chi$ is a flat degeneration of a family of smooth curves of degree  $c(s+2,t,k)$ and genus $g(s+2,t,k)$. By Claim 1, $Y\cup J\cup \chi$ is a flat degeneration of a family of unions of $Y\cup D_0\cup D_1\cup D_2$ and $c(s+2,t,k)
 -c(s,t,k)-3$ disjoint lines, none of them intersecting $D_1\cup D_2$ and each of them intersecting $Y\cup D_0\cup D_1\cup D_2$ quasi transversely at exactly two points. 
We first prove that $Y\cup J\cup \chi$ is a flat degeneration of a family of unions of $Y\cup D_0\cup D_1\cup D_2$
 and $c(s+2,t,k) -c(s,t,k)-3$ lines $L_{i\lambda}$, $\lambda \in \KK\setminus \{0\}$,  such that $L_{\cap i\lambda} \cap Y \ne \emptyset$
 if and only if $i\le z$, $D_0\cap L_{i\lambda} =D_0\cap L_i$ for all $i$ and $D_1\cap L_{i\lambda} = D_2\cap L_{i\lambda} =\emptyset$ for all $\lambda \in \KK\setminus \{0\}$.
We may do this smoothing separately, first for $D_0, D_1,D_2$ and then for each line $L_{i\lambda}$ quoting each time Lemma \ref{aaa1} and following the deformation with a family of lines with special fiber
$L_{j\lambda}$, $j\ne i$, because any two points of $\PP^3$ uniquely determine a line and the line depends regularly if we move regularly the two points (see the Side Remark below), but we may do all
the smoothings simultaneously just choosing the appropriate references from \cite{hh} or \cite{s} or other sources, e.g. Lemma \ref{aaa1} or \cite[Corollary 5.2]{hh}.

\quad \emph{Side Remark:} As the reader may have noticed in the proof Claim 1 we only used Lemma \ref{t1.1} (i.e. a known result) and the fact that two different points $q, q'$
of $\PP^r$, $r\ge 3$, uniquely determine a line $L(q,q')\subset \PP^3$ and the regularity of the map $(q,q') \to L(q,q')$ from $\PP^r\times \PP ^r\setminus \Delta_{\PP^r}$, where $\Delta _{\PP^r}$ is the diagonal,  to the Grassmaniann $G(1,r)$. Lemma \ref{t1.1} is true in a more general situation, as the flat limit of a two smooth germs of curves
colliding to an ordinary node; call $o$ this nodal point. Instead of the lines $D, L$ we take the tangent lines to the two smooth germs of curves. Their linear span determines an element of $G(3,r)$ and we consider the first infinitesimal neighborhood $\chi (o)$ of $o$ in a $3$-dimensional projective space which is a limit of these elements of $G(3,r)$. In the literature Claims 1 and 2 are often used, but without separating them. For the algebraic geometers of our generation the first instance of this flat limit with a nilpotent was \cite[III.9.8.4 and figure 11 at page 260]{hart}.
 
To obtain a smoothing of $W$ as in the Claim 2, but compatible with the data $A_1,A_2$, see steps (a1) and (a2). 
We have $\Res _Q(W\cup A) = X\cup S_1\cup S_2$ and so $h^i(\Ii _{\Res_Q(W\cup A)}(s)) =0$, $i=0,1$. We have
$h^i(Q,\Ii _{(W\cap Q)\cup A}(s+2,s+2)) = h^i(Q,\Ii _{(X\cap (Q\setminus J)\cup A}(s-1,s+5+c(s,t,k)-c(s+2,t,k)))$. We have $\sharp ((X\cap (Q\setminus J))\cup A)=h^0(Q,\Oo_Q(s-1,s+5+c(s,t,k)-c(s+2,t,k))$. We have $h^1(Q,\Ii _A(s-1,s+5+c(s,t,k)-c(s+2,t,k))) =0$, because $s+5+c(s,t,k)-c(s+2,t,k)>0$, $f\le 2$ and $\sharp (A_1)\le s$; this is a key reason for our definition of $M(s+2,t,k)$.
Therefore to prove that $h^i(Q,\Ii _{(X\cap (Q\setminus J)\cup A}(s-1,s+5+c(s,t,k)-c(s+2,t,k)))=0$, $i=0,1$, it is sufficient to prove that we may take as $X\cap (Q\setminus J)$
a general subset of $Q$ with its prescribed cardinality. By Remark \ref{o+aa1} we have $h^1(N_X(-2)) =0$. Since $h^1(N_X(-2)) =0$, we may deform $X$ keeping fixed $E$
so that the other points are general in $Q$. 

\quad (a1) We have just proved that $h^i(\Ii _{W\cup A}(s+2)) =0$, $i=0,1$. If $d(s+2,t,k) =0$, then $M(s+2,t,k)$ is proved for $e=2$.
Now assume $d(s+2,t,k)>0$. To prove $M(s+2,t,k)$ when $e=2$ we need to deform $W$ to a smooth $X'=C_{t,k}\sqcup Y'$ intersecting
transversally $Q$ and (perhaps moving $A$) to obtain condition (b) of $M(s+2,t,k)$. Set $P_i:= Y\cap D_i$, $i=0,1,2$. Let $\{D_i(\lambda )\}_{\lambda \in \Lambda}$ be a deformation of $D_i$ with
$\Lambda$ a smooth and connected affine curve, $o\in \Lambda$, $D_i(o) =D_i$, $D_i(\lambda )$, $\lambda \in \Lambda \setminus \{o\}$, a line of $\PP^3$ transversal
to $Q$ and containing $P_i$. Fix $i\in \{1,\dots ,z\}$. By Remark \ref{+a2.0} for each $\lambda \in \Lambda$ there is a unique line $L_i(\lambda )\subset \PP^3$
such that $D_0\cap L_i\in L_i(\lambda)$, $L_i(\lambda )\cap D_1(\lambda )\ne \emptyset$ and $L_i(\lambda )\cap D_2(\lambda )\ne \emptyset$; restricting if necessary $\Lambda$ we
may assume that all $L_i(\lambda)$, $\lambda \ne o$, are transversal to $Q$.
Fix an integer $i$ with $z<i\le c(s+2,t,k)-c(s,t,k)-3$ and fix a general $m_i\in L_i$. By Remark \ref{+a2.0} there is a unique line $L_i(\lambda)$ such that $m_i\in L_i(\lambda)$, $L_i(\lambda )\cap D_1(\lambda )\ne \emptyset$ and $L_i(\lambda )\cap D_2(\lambda )\ne \emptyset$; restricting if necessary $\Lambda$ we
may assume that all $L_i(\lambda)$, $\lambda \ne o$, are transversal to $Q$. Restricting if necessary $\Lambda$ to a smaller neighborhood
of $o$ in $\Lambda$ we may assume that $L_i(\lambda )\cap L_j(\lambda) =\emptyset$ for all $i\ne j$, that $C_{t,k}\cap L_i(\lambda ) =\emptyset$ for all $i$ and all $\lambda$,
that $L_i(\lambda )\cap D_0\ne \emptyset$ if and only if $i\le z$. Fix a general $\lambda \in \Lambda$ and set $J(\lambda):= D_0(\lambda)\cup D_1(\lambda) \cup D_2(\lambda) \cup (\bigcup _{i=1}^{c(s+2,t,k)-c(s,t,k)-3} L_i(\lambda))$. Let $\chi (\lambda)$ be the union of all $\chi (q)$
with either $q\in D_1(\lambda )\cap L_i(\lambda)$, $1\le i \le c(s+2,t,k)-c(s,t,k) -3$ or $q\in D_2(\lambda )\cap L_i(\lambda)$, $1 \le i\le z$. Set $W(\lambda ):=X\cup J(\lambda)\cup \chi (\lambda)$. $W(\lambda)$ is the disjoint union of $C_{t,k}$ and of a degeneration
of a flat family of smooth and connected curves of degree $c(s+2,t,k)$ and genus $g(s+2,t,k)$.
As in the first part of step (a), restricting if necessary $\Lambda$, by semicontinuity we get $h^i(\Ii _{W(\lambda )\cup A}(s+2)) =0$, $i=0,1$.

\quad (a2) To prove $M(s+2,t,k)$ we need to prove that there is a set like $A$ (call it $A'$) satisfying both $h^i(\Ii _{W(\lambda )\cup A'}(s+2)) =0$, $i=0,1$, and condition (b)
of $M(s+2,t,k)$. First of all, instead of $P_i$, $0\le i \le 2$, we take a family $\{P_i(\lambda )\}_{\lambda \in \Lambda}$ of points of $Y$ with $P_i(o) =P_i$
and $P_i(\lambda )\in Y\setminus Y\cap Q$ for all $\lambda \in \Lambda \setminus \{o\}$. Assume for the moment $f=2$. We modify the definition of $D_i(\lambda )$, because we 
impose that $P_i(\lambda )\in D_i(\lambda)$ (instead of $P_i\in D_i$), but we also impose that $D_1(\lambda )\cap R_1\ne \emptyset$
and $D_2(\lambda )\cap R_2\ne \emptyset$ (this is possible by Remark \ref{+a2.0}). Then we construct $L_i(\lambda )$ as above. With this new definition
$R_1$ and $R_2$ are secant lines of $W(\lambda )\setminus (C_{t,k}\cup Y)$ , $Y\subset W(\lambda)$, $\pi _1(A_2)\subseteq \pi _1(A_1)$ and $\pi _1(A_f)\subseteq \pi _1(Q\cap (Y\setminus J(\lambda )\cap Y))$; call $m_1,\dots ,m_x$, $x=\sharp (A_f)$, the points of $Y\cap Q$ whose image is $\pi _1(A_f)$. We fix $\lambda\in \Lambda \setminus \{o\}$. Let $\{B_\delta \}_{\delta \in \Delta}$ be a smoothing of $W(\lambda)$ with $\Delta$ an affine and connected smooth curve, $a\in \Delta$, and $B_a = W(\lambda)$. Set $A(a):= A$. Since $Y$ is transversal to $Q$,
up to a finite covering of $\Delta$ we may find $x+2$ sections $s_1,\dots ,s_x, z_1,z_2$ of the total space of $\{B_\delta \}_{\delta \in \Delta}$ with $s_i(a) =m_i$, $z_1(a) = R_1\cap D_1(\lambda)$, $z_2(a) = R_2\cap D_2(\lambda )$,
$s_i(\delta )\in B_\delta \cap Q$, $z_1(\delta )\in B_\delta \cap Q$ and $z_2(\delta )\in B_\delta \cap Q$
 for all $\Delta$. Let $R_h(\delta )$, $h=1,2$, be the only element of $|\Oo _Q(0,1)|$ containing $z_h(\delta )$. For each $\delta \in \Delta \setminus \{a\}$ and $i\in \{1,\dots ,x\}$ let $M_i(\delta)\in |\Oo _Q(1,0)|$ be the only line of this ruling
of $Q$ containing $s_i(\delta)$. Set $A_1(\delta ):= \cup _{i=1}^{x} (R_1(\delta)\cap M_i(\delta))$ and $A_2(\delta ):= \cup _{i=1}^{d(s+2,t,k)-x} (R_2(\delta )\cap M_i(\delta))$. Set $X_\delta := C_{t,k}\cup B_\delta$.
By construction $(X_\delta ,Q,R_1,R_2,A_1(\delta ),A_2(\delta ))$ satisfies condition (b) of $M(s+2,t,k)$, exchanging the two rulings of $Q$. By semicontinuity 
we have $h^i(\Ii _{B_\delta \cup A(\delta )}(s+2)) =0$, $i=0,1$, for a general $\delta \in \Delta$.

Now assume $f=1$. In this case we only impose that $D_i(\lambda )$ meets $R_1$; we have $\pi _1(A_1)\subset \pi _1(Q\cap (Y\setminus J(\lambda )\cap Y))$ and $x = \sharp (A_1)=
b(s+2,t,k)$.

\quad (b) Assume $e=1$ and $d(s,t,k)>0$, i.e. assume $0<d(s,t,k) \le c(s+2,t,k)-c(s,t,k)-3$. We set $S_2:=0$ and ignore $D_2$.  We fix $o\in S_1$. Take a line $D_0\ne D_1$ meeting
$Y\cap Q$ and $c(s+2,t,k)-c(s,t,k)-2$ distinct lines $L_i\in |\Oo _Q(0,1)|$, with $L_i\cap (C_{t,k}\cap Q) =\emptyset$ for all $i$, $L_i\cap (Y\cap Q)\ne \emptyset$
if and only if $1\le i \le d(s,t,k)-1$ and $S_1\setminus \{o\} =D_1\cap (L_1\cup \cdots \cup L_{d(s,t,k)-1})$. Set $J:= (D_0\cup D_1)\cup (\bigcup _{i=1}^{c(s+2,t,k)+c(s,t,k)-2}L_i)$ and $\chi := \cup_{o\in S_1} \chi (o)$. Note that $\chi (X\cup J\cup \chi) -\chi (X) = c(s,t,k)-c(s+2,t,k)+3$. To modify step (a2) we impose that $D_1(\lambda )\cap R_1\ne \emptyset$
and $D_0(\lambda )\cap R_2\ne \emptyset$.

\quad ({c}) Assume $d(s,t,k) =0$. Hence $S_1=S_2=\emptyset$. Take a line 
$D_0\in |\Oo _Q(1,0)|$ different from $D_1,D_2$ and with $D_0\cap Y\cap Q\ne \emptyset$. Take
$c(s+2,t,k)-c(s,t,k)-1$ lines $L_i\in |\Oo _Q(0,1)|$, $1\le i \le c(s+2,t,k)-c(s,t,k)-1$, such that $L_i\cap (C_{t,k}\cap Q)  =\emptyset$ for all $i$ and $L_i\cap (Y\cap Q)\ne \emptyset$
if and only if $1\le i \le c(s+2,t,k)-c(s,t,k)-3$. Set $J:= D_0\cup (\bigcup _{i=1}^{c(s+2,t,k)-c(m,t,k) -1} L_i)$, $Y':= Y\cup J$ and $W:= X\cup J$.  Note that $\chi (W) -\chi (X) = c(s,t,k)-c(s+2,t,k)+3$. The union $Y'$ is a connected nodal curve,
which is a flat degeneration of a family of smooth curves of degree $c(s+2,t,k)$ and genus $g(s+2,t,k)$ not intersecting $C_{t,k}$. As in step (a) we
get $h^1(\Ii _W(s+2)) =0$ and $h^0(\Ii _W(s+2)) =d(s+2,t,k)$. If $d(s+2,t,k) =0$, then we are done, because $A=\emptyset$ and so condition (b) of $M(s+2,t,k)$ is trivially true. Now assume $d(s+2,t,k) >0$. 

First assume $f=2$.  As in step (a) we prove $M(s+2,t,k)$ interchanging the rulings of $Q$ and set $x:= c(s+4,t,k)-c(s+2,t,k)-3$.
We fix general lines $R_1,R_2\in |\Oo _Q(0,1)|$ and take $A_i\subset R_i$ such that  $\pi _1(A_2)\subseteq \pi _1(A_1)\cap \pi _1(Q\cap (Y\setminus J\cap Y))$.
Set $A:= A_1\cup A_2$. For a general $X$ we have $h^i(\Ii _{W\cup A}(s+2)) =0$, $i=0,1$. Set $q:= D_0\cap Y$. By Remark \ref{+a2.0} there is a family
$\{D_0(\lambda )\}_{\lambda \in \Lambda}$ of lines of $\PP^3$ and $o\in \Lambda$ with $D_0(o) =D_0$, $\sharp (D_0(\lambda )\cap Y)=1$ for all $\lambda$,
$D_0(\lambda )\cap Y\notin Q$ if $\lambda \ne 0$, $D_0(\lambda )\cap R_1\ne \emptyset$ and $D_0(\lambda )\cap R_2\ne \emptyset$. Up to a finite covering
of $\Lambda$ we may also find families $\{L_i(\lambda )\}_{\lambda \in \Lambda}$, $1\le i\le c(s+2,t,k)-c(s,t,k)-1$. Set $J(\lambda) =D_0(\lambda ):= D_0\cup (\bigcup
_{i=1}^{c(s+2,t,k)-c(s,t,k)-1} L_i(\lambda ))$.
We do the smoothing of $Y\cup J(\lambda)$ as in step (a2).

Finally, if $f=1$ we only need $D_0(\lambda )\cap R_1\ne \emptyset$ for all $\lambda$.\end{proof}


\section{With a constant genus $g$}\label{horace}

We fix an integer $t\ge 27$ and take $k\in \{t-1,t\}$. We fix an integer $g\ge g_{t,k} +g(t+k+5,t,k)$. Let $y$ be the maximal integer $\ge t+k+5$ such that
$y\equiv t+k-1 \pmod{2}$ and $g_{t,k} +g(y,t,k) \le g$ ($y$ exists, because $\lim _{u\to+\infty} g(t+k+1+2u,t,k) =+\infty$). By the definition of $y$ we have $y\ge t+k+5$ and
$y \equiv t+k-1\pmod{2}$.  For all integers $x \ge y+2$ with $x\equiv y\pmod{2}$ define the integers $a(x,t,k,y)$ and $b(x,t,k,y)$ by the relation
\begin{equation}\label{eqd2}
xd_{t,k}+3-g  +xa(x,t,k,y) +b(x,t,k,y) = \binom{x+3}{3}, \ 0 \le b(x,t,k,y) \le x-1
\end{equation}

If $x\ge y+4$, by taking the difference between equation (\ref{eqd2}) and the same equation for the integer $x':= x-2$ we get
\begin{eqnarray}\label{eqd2.1+}
2d_{t,k} + 2a(x,t,k,y) + (x+2)(a(x+2,t,k,y)-a(x,t,k,y)) \notag\\
+b(x+2,t,k,y) -b(x,t,k,y) = (x+3)^2 
\end{eqnarray}

\begin{lemma}\label{+n2}
For each $x\ge y+2$ with $x\equiv y\pmod{2}$ we have $2(a(x+2,t,k,y)-a(x,t,k,y))\ge x+5$.
\end{lemma}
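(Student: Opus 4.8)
The plan is to obtain a closed form for $a(x,t,k,y)$ by comparing the defining relation (\ref{eqd2}) with the balanced curves $C_{t',k'}$. First I would set $t'=\lceil (x+1)/2\rceil$, $k'=\lfloor (x+1)/2\rfloor$ and $t''=\lceil (x+3)/2\rceil$, $k''=\lfloor (x+3)/2\rfloor$, so that $t'+k'=x+1$ and $t''+k''=x+3$. Feeding (\ref{eq+bb+1}) for these two pairs (which expresses $\binom{x+3}{3}$ and $\binom{x+5}{3}$ through $d_{t',k'},g_{t',k'}$ and $d_{t'',k''},g_{t'',k''}$) into (\ref{eqd2}), written as $xa(x,t,k,y)+b(x,t,k,y)=\binom{x+3}{3}-xd_{t,k}-3+g$, the binomial coefficients cancel and one is left with $xa(x,t,k,y)+b(x,t,k,y)=x(d_{t',k'}-d_{t,k})-1-(g_{t',k'}-g)$. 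Since $0\le b(x,t,k,y)\le x-1$, Euclidean division gives the closed form $a(x,t,k,y)=d_{t',k'}-d_{t,k}-\lceil (1+g_{t',k'}-g)/x\rceil$, and likewise $a(x+2,t,k,y)=d_{t'',k''}-d_{t,k}-\lceil (1+g_{t'',k''}-g)/(x+2)\rceil$.

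Next I would record two elementary identities, namely $d_{t'',k''}-d_{t',k'}=x+3$ and $g_{t'',k''}-g_{t',k'}=2d_{t',k'}-(x+3)$, both immediate from $d_{t',k'}=\lfloor (x+2)^2/4\rfloor$ together with (\ref{eq+bb+1}) for the two pairs. Subtracting the two closed forms then yields $a(x+2,t,k,y)-a(x,t,k,y)=(x+3)+\lceil (1+G)/x\rceil-\lceil (1+G+2d_{t',k'}-(x+3))/(x+2)\rceil$, where $G:=g_{t',k'}-g$. Thus the desired inequality $2(a(x+2,t,k,y)-a(x,t,k,y))\ge x+5$ reduces to showing that the difference of ceilings $B-A$, with $A=\lceil (1+G)/x\rceil$ and $B=\lceil (1+G+2d_{t',k'}-(x+3))/(x+2)\rceil$, satisfies $B-A<(x+2)/2$: indeed an integer strictly below $(x+2)/2$ forces $2(a(x+2,t,k,y)-a(x,t,k,y))\ge x+5$ in both parities of $x$, since $a(x+2,t,k,y)-a(x,t,k,y)=(x+3)-(B-A)$.

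The genus hypothesis enters exactly once. Since $x\ge y+2$ and $g(x,t,k)$ is increasing in $x$ (by Lemma \ref{aa+1}), the maximality of $y$ gives $g<g_{t,k}+g(x,t,k)$, and Lemma \ref{r+00} gives $g_{t,k}+g(x,t,k)<g_{t',k'}$; hence $G\ge 1$, and in particular $1+G>0$. Using $\lceil p\rceil<p+1$ and $\lceil q\rceil\ge q$, one estimates $B-A<\frac{2d_{t',k'}-(x+3)}{x+2}-\frac{2(1+G)}{x(x+2)}+1<\frac{2d_{t',k'}-(x+3)}{x+2}+1$, where the middle term was dropped precisely because $1+G>0$; then the trivial bound $d_{t',k'}=\lfloor (x+2)^2/4\rfloor\le (x+2)^2/4$ makes the right-hand side strictly less than $(x+2)/2$, as required. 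The main obstacle is that this final estimate is borderline: the available slack is only of order $1/(x+2)$, so the positivity of $G$ (i.e. the genuine use of Lemma \ref{r+00} and of the choice of $y$) and the parity bookkeeping that upgrades $B-A<(x+2)/2$ to $2\Delta a\ge x+5$ are both indispensable; in particular one cannot replace the exact closed form by the weaker bound $a(x,t,k,y)\le c(x,t,k)$, which loses the decisive $1/(x+2)$ of slack.
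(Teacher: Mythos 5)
Your proof is correct and is essentially the paper's argument in direct form: both hinge on comparing $a(x,t,k,y)$ with the balanced curve $C_{t',k'}$, $t'+k'=x+1$, using the maximality of $y$ together with Lemma \ref{r+00} to ensure $g<g_{t',k'}$, and then on the quadratic size $d_{t',k'}=\lfloor (x+2)^2/4\rfloor$. Where the paper assumes $2(a(x+2,t,k,y)-a(x,t,k,y))\le x+4$ and derives a numerical contradiction from (\ref{eqd2.1+}) together with the bound $d_{t,k}+a(x,t,k,y)\le d_{t',k'}$, you instead extract exact closed forms by Euclidean division and estimate the resulting difference of ceilings --- the same numerics, organized directly rather than by contradiction.
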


\begin{proof}
Assume by contradiction $2(a(x+2,t,k,y)-a(x,t,k,y))\le x+4$. Recall that for all $u\ge v>0$ we have
\begin{equation}\label{eq+d1}
(u+v-1)d_{u,v} +2-g_{u,v} =\binom{u+v+2}{3}
\end{equation}

\emph{Claim 1:} We have $g_{\lceil (y+3)/2\rceil,\lfloor (y+3)/2\rfloor} > g$.

\emph{Proof of Claim 1:} By the definition of $y$ we have $g(y+2,t,k) +g_{t,k} > g$. Thus to prove Claim 1 it is sufficient to use that $g(y+2,t,k) + g_{t,k} \le g_{\lceil (y+3)/2\rceil ,\lfloor (y+3)/2\rfloor}$ (Lemma \ref{r+00}).

First assume $x$ odd, i.e. $k=t$. Since $g_{(x+1)/2,(x+1)/2} \ge g_{(y+3)/2,(y+3)/2} >g$ by Claim 1, (\ref{eq+d1}) and (\ref{eqd2}) give $d_{(x+1)/2,(x+1)/2} \ge d_{t,k}+a(x,t,k,y)$. Since $b(x+2,t,k,y)\le x+1$
and $b(x,t,k,y)\ge 0$  (\ref{eqd2.1+})
gives
$$
(x+1)(x+3)/2 + (x+2)(x+4)/2 +x+1 \ge (x+3)^2,
$$
which is false. Now assume $x$ even, i.e. $k=t-1$. Since $g_{(x+2)/2,x/2}\ge g_{(y+4)/2,(y+2)/2} >g$ by Claim 1,  (\ref{eq+d1}) and (\ref{eqd2}) gives $d_{(x+2)/2,x/2} \ge d_{t,k}+a(x,t,k,y)$.
 Since $b(x+2,t,k,y)\le x+1$
and $b(x,t,k,y)\ge 0$  (\ref{eqd2.1+})
gives
$$
(x+2)^2/2 + (x+2)(x+4)/2 +x+1 \ge (x+3)^2,
$$
which is false.
\end{proof}

\begin{lemma}\label{+n3}
 We have $2(a(y+2,t,k,y) -c(y,t,k)) \ge y+5$.
 \end{lemma}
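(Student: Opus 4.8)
The plan is to subtract the two defining relations at the appropriate levels and reduce everything to an elementary numerical inequality, exactly in the spirit of the proofs of Lemmas \ref{aa+1} and \ref{+n2}. Concretely, I would write (\ref{eqd2}) for the integer $x=y+2$ and (\ref{eq+a2}) for the integer $s=y$, subtract them, and use $\binom{y+5}{3}-\binom{y+3}{3}=(y+3)^2$ together with $(y+2)a(y+2,t,k,y)-y\,c(y,t,k)=(y+2)(a(y+2,t,k,y)-c(y,t,k))+2c(y,t,k)$ to obtain the identity
\[
(y+2)(a(y+2,t,k,y)-c(y,t,k)) = (y+3)^2 + (g-g_{t,k}-g(y,t,k)) - b(y+2,t,k,y) + d(y,t,k) - 2(d_{t,k}+c(y,t,k)).
\]
This isolates the quantity $a(y+2,t,k,y)-c(y,t,k)$ that must be bounded from below.

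Next I would bound each term on the right. By the very definition of $y$ we have $g_{t,k}+g(y,t,k)\le g$, so $g-g_{t,k}-g(y,t,k)\ge 0$; the ranges in (\ref{eqd2}) and (\ref{eq+a2}) give $b(y+2,t,k,y)\le y+1$ and $d(y,t,k)\ge 0$; and the computation carried out inside the proof of Lemma \ref{aa+1} (legitimate here with $s=y$, since $y\ge t+k+5$, $y\equiv t+k-1\pmod 2$ and $t\ge 27$, so that Lemma \ref{r+00} applies) yields the degree bound $d_{t,k}+c(y,t,k)\le d_{\lceil (y+1)/2\rceil,\lfloor (y+1)/2\rfloor}$. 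Substituting these, and using that $2d_{\lceil (y+1)/2\rceil,\lfloor (y+1)/2\rfloor}=(y+1)(y+3)/2$ when $y$ is odd (i.e.\ $k=t$) and $=(y+2)^2/2$ when $y$ is even (i.e.\ $k=t-1$), I get in both cases
\[
(y+2)(a(y+2,t,k,y)-c(y,t,k)) \ge \frac{y^2+6y+12}{2} > \frac{(y+2)(y+4)}{2}.
\]
Dividing by $y+2>0$ gives $2(a(y+2,t,k,y)-c(y,t,k))>y+4$, and since the left-hand side is an integer it is $\ge y+5$, which is the assertion.

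The main obstacle is closing the numerics: a straight lower bound does not suffice, since $\frac{y^2+6y+12}{2}$ falls short of the target value $\frac{(y+2)(y+5)}{2}$ for $y$ large. The trick is to only aim to beat $\frac{(y+2)(y+4)}{2}$ and then upgrade the strict inequality $2(a(y+2,t,k,y)-c(y,t,k))>y+4$ to $\ge y+5$ by integrality. The two inputs that make even this margin available are the maximality of $y$ (which lets us discard the nonnegative term $g-g_{t,k}-g(y,t,k)$) and the degree comparison $d_{t,k}+c(y,t,k)\le d_{\lceil (y+1)/2\rceil,\lfloor (y+1)/2\rfloor}$ inherited from Lemma \ref{aa+1}; once these are in hand the crude remainder estimate $b(y+2,t,k,y)\le y+1$ is just barely affordable, which is why one must use the exact value of $2d_{\lceil (y+1)/2\rceil,\lfloor (y+1)/2\rfloor}$ and cannot replace it by a coarser bound.
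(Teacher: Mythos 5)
Your proof is correct and takes essentially the same route as the paper's: the paper introduces an auxiliary integer $w$ (defined by the analogue of (\ref{eqd2}) with genus $g_{t,k}+g(y,t,k)$, so that $g\ge g_{t,k}+g(y,t,k)$ gives $w\le a(y+2,t,k,y)$), subtracts the case $s=y$ of (\ref{eq+a2}), and then "continues as in the proof of Lemma \ref{+n2}", i.e.\ invokes the same Lemma \ref{r+00}-based comparison $d_{t,k}+c(y,t,k)\le d_{\lceil (y+1)/2\rceil,\lfloor (y+1)/2\rfloor}$ and the same exact values of $2d_{\lceil (y+1)/2\rceil,\lfloor (y+1)/2\rfloor}$ to close the numerics. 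Your only deviations---keeping the nonnegative slack $g-g_{t,k}-g(y,t,k)$ explicitly instead of absorbing it into $w$, and arguing directly (with the integrality upgrade from $>y+4$ to $\ge y+5$) rather than by contradiction---are cosmetic.
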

 
\begin{proof}
Define the integers $w, z$ by the relations
\begin{equation}\label{eq++1}
(y+2)(w+d_{t,k}) +3-g_{t,k} -g(y,t,k) +z =\binom{y+5}{3}, \ 0\le z\le y+1
\end{equation}
Since $g\ge g_{t,k} +g(y,t,k)$, we have $w\le a(y+2,t,k)$. Hence it is sufficient to prove that $2(w-c(y,t,k)) \ge y+5$. Taking the difference between (\ref{eq++1}) and the case $s=y$ of (\ref{eq+a2}) we get
$$2d_{t,k} +2c(y,t,k) +(y+2)(w-c(y,t,k)) +z-d(y,t,k) =(y+3)^2$$Then we continue as in the proof of Lemma \ref{+n2} with $y+2$ instead of $x+2$.\end{proof}

The next lemma follows at once by induction on $x$, the inequality $2c(y,t,k) \ge y+6$ and  Lemmas \ref{+n2} and \ref{+n3}.

\begin{lemma}\label{+n2.0}
We have $2a(x,t,k,y)\ge x+6$ for all integers $x\ge y+2$ with $x\equiv y\pmod{2}$.
\end{lemma}

\begin{lemma}\label{o+n2.0}
For each $x\ge y+2$ with $x\equiv y\pmod{2}$ we have $a(x,t,k,y) \ge g-g_{t,k}+3$.
\end{lemma}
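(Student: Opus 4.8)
The plan is to reduce to the extremal value $x=y+2$ and then to prove the inequality there by a direct comparison of the two ``definitions by Euclidean division'' governing $a(y+2,t,k,y)$ and $c(y+2,t,k)$.

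First I would invoke the monotonicity just established. By Lemma \ref{+n2} we have $2(a(x+2,t,k,y)-a(x,t,k,y))\ge x+5>0$ for every $x\ge y+2$ with $x\equiv y\pmod 2$, so $a(x,t,k,y)$ is strictly increasing along this arithmetic progression. Consequently $a(x,t,k,y)\ge a(y+2,t,k,y)$ for all admissible $x\ge y+2$, and since the right-hand side $g-g_{t,k}+3$ does not depend on $x$, it suffices to prove $a(y+2,t,k,y)\ge g-g_{t,k}+3$.

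For the case $x=y+2$ I would work with the integer
$$N:=(y+2)\bigl(a(y+2,t,k,y)-(g-g_{t,k}+3)\bigr),$$
which is a multiple of $y+2$, and show $N>0$. Expanding $a(y+2,t,k,y)$ by its defining relation (\ref{eqd2}) with $x=y+2$ gives
$$N=\binom{y+5}{3}-(y+2)d_{t,k}+(y+2)g_{t,k}-3-b(y+2,t,k,y)-3(y+2)-(y+1)g.$$
The only term with the ``wrong'' sign is $-(y+1)g$, and here I would insert the upper bound $g\le g_{t,k}+g(y+2,t,k)-1$ coming from the maximality of $y$: indeed $y+2$ is admissible but larger than $y$, so it violates the defining inequality $g_{t,k}+g(\cdot)\le g$. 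After this substitution the coefficient of $g_{t,k}$ collapses, and replacing $\binom{y+5}{3}-(y+2)d_{t,k}$ by its value from (\ref{eq+a2}) for $s=y+2$ rewrites every $c(y+2,t,k)$-term through the combination $c(y+2,t,k)-g(y+2,t,k)-3$.

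The decisive simplification is the identity $c(s,t,k)-g(s,t,k)-3=3(s-t-k-1)/2$, immediate from the definition of $g(s,t,k)$; for $s=y+2$ and $y\ge t+k+5$ this equals $3(y+1-t-k)/2\ge 9$. The bookkeeping then yields
$$N\ge (y+2)\,\frac{3(y+1-t-k)}{2}+d(y+2,t,k)-b(y+2,t,k,y)+(y+1),$$
and since $0\le d(y+2,t,k)$ and $b(y+2,t,k,y)\le y+1$ the last three terms sum to something $\ge 0$, so $N\ge 9(y+2)>0$. Hence $a(y+2,t,k,y)\ge g-g_{t,k}+3$ (in fact with several units to spare), completing the proof. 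I expect the main obstacle to be precisely this sign-bookkeeping: one must feed in the maximality bound $g\le g_{t,k}+g(y+2,t,k)-1$ in exactly the right place so that the $g$- and $g_{t,k}$-terms cancel down to the single harmless quantity $c(y+2,t,k)-g(y+2,t,k)-3$, and one must verify that the residual remainder contribution $d(y+2,t,k)-b(y+2,t,k,y)+(y+1)$ stays nonnegative rather than dragging $N$ below zero.
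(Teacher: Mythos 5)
Your proposal is correct and is essentially the paper's own proof: the same reduction to $x=y+2$ via Lemma \ref{+n2}, followed by the same inputs at $x=y+2$, namely the maximality bound $g\le g_{t,k}+g(y+2,t,k)-1$, the comparison of (\ref{eqd2}) with (\ref{eq+a2}), and the positivity of $c(y+2,t,k)-g(y+2,t,k)-3=3(y+1-t-k)/2$. The only (harmless) difference is organizational: the paper first reduces to the extremal genus $\tau:=g_{t,k}+g(y+2,t,k)-1$ through the comparison (\ref{eqr+00.1}) and then works at $\tau$ via the identity (\ref{eqo+n2.0}), whereas you substitute the bound $g\le\tau$ directly into a single inequality chain, which incidentally returns the slightly stronger estimate $a(x,t,k,y)\ge g-g_{t,k}+12$.
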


\begin{proof}
First assume $x=y+2$. By the definition of the integer $y$ we have $g_{t,k} +g(y,t,k) \le g \le \tau := g_{t,k}+g(y+2,t,k)-1$. The integers $a(y+2,t,k,y)$ and $b(y+2,t,k,y)$ depend on the choice of $g$ and (only for this proof) we call them $a(y+2,t,k,y)_g$ and $a(y+2,t,k,y)_g$. Fix integers $q, q'$ such that $g_{t,k} +g(y,t,k) \le q\le q' \le \tau$. From (\ref{eqd2}) or (\ref{eqd2.1+}) for $q$ and $q'$ we get
\begin{align}\label{eqr+00.1}
& (y+2)(a(y+2,t,k,y)_{q'} -a(y+2,t,k,y)_q) = \notag \\
& b(y+2,t,k,y)_q -b(y+2,t,k,y) _{q'} +q'-q 
\end{align}
Since $0 \le b(y+2,t,k,y)_q \le y+1$ and  $0 \le b(y+2,t,k,y)_{q'} \le y+1$, (\ref{eqr+00.1}) implies $a(y+2,t,k,y) _q \le a(y+2,t,k,y) _{q'} \le a(y+2,t,k,y) _q +q'-q$. Thus to prove the lemma for $x=y+2$ it is sufficient to prove it for the genus $\tau$. We have, by (\ref{eq+a2}) and (\ref{eqd2}), 
\begin{align}
&(y+2)(d_{t,k} +c(y+2,t,k)) +3-g_{t,k}-g(y+2,t,k) +d(y+2,t,k) =\notag \\
& (y+2)(d_{t,k}+a(y+2,t,k,y)_\tau)+b(y+2,t,k,y)_\tau +3- \tau \notag
\end{align}
Hence
\begin{equation}\label{eqo+n2.0}
(y+2)(c(y+2,t,k)-a(y+2,t,k,y)_\tau )= -d(y+2,t,k) +b(y+2,t,k,y)_\tau +1\end{equation}
From (\ref{eqo+n2.0}) we get $a(y+2,t,k,y)_\tau \ge c(y+2,t,k)-1$. Since $c_{y+2,t,k} \ge g(y+2,t,k) +3 = \tau - g_{t,k}+4$, we get $a(y+2,t,k,y)_\tau \ge \tau-g_{t,k}+3$.

Now assume $x\ge y+4$. By Lemma \ref{+n2} we have $a(x,t,k,y)\ge a(y+2,t,k,y)$.
\end{proof}

 By Lemma \ref{o+n2.0} there is a non-special curve of degree $a(x,t,k,y)$ and genus $g-g_{t,k}$. We need this observation in the next statement.

 \quad {\bf{Assertion}} $N(x,t,k,y)$, $x\ge y$, $x\equiv y\pmod{2}$:  Set $e=1$ if $0\le b(x,t,k,y) \le a(x+2,t,k,y)-a(x,t,k,y) -1$ and $e=2$ if $b(x,t,k,y) \ge a(x+2,t,k,y)-a(x,t,k,y)$. There
 is a $6$-tuple
  $(X,Q,D_1,D_2,S_1,S_2)$ such that
 \begin{itemize}
 \item[(a)] $Q$ is a smooth quadric surface, $X = C_{t,k}\sqcup Y$, $Y$ is a smooth non-special curve of degree $a(x,t,k,y)$ and genus $g-g_{t,k}$
 and $Q$ intersects transversally
 $X$, with no line of $Q$ containing $\ge 2$ points of $X\cap Q$;
 \item[(b)] $D_1,D_2$ are different elements of $|\Oo _Q(1,0)|$, each of them containing one point of $Y\cap Q$, $S_i\subset D_i\setminus D_i\cap Y$, $1\le i \le 2$,
 and $\sharp (S_1)+\sharp (S_2) =b(x,t,k,y)$; $\pi _2(S_2) \subseteq \pi _2(S_1)$ and $\pi _2(S_e) \subset \pi _2(Y\cap (Q\setminus (D_1\cup D_2)))$;
$S_2=\emptyset$ if $e=1$, $\sharp (S_2) = b(x+2,t,k,y) -a(x+2,t,k,y)+a(x,t,k,y) +2$ if $e=2$;
 \item[({c})] $h^i(\Ii _{X\cup S_1\cup S_2}(x)) =0$, $i=0,1$.
 \end{itemize}

\begin{lemma}\label{+g1}
If $N(x,t,k,y)$ is true, then $N(x+2,t,k,y)$ is true.
\end{lemma}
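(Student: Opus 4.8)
The plan is to transcribe the inductive smoothing argument of Lemma \ref{+a2}, under the dictionary $s\leftrightarrow x$, $c(s,t,k)\leftrightarrow a(x,t,k,y)$, $d(s,t,k)\leftrightarrow b(x,t,k,y)$ and $g(s,t,k)\leftrightarrow g-g_{t,k}$. The one genuine novelty is that the genus of the movable curve $Y$ is now the \emph{constant} $g-g_{t,k}$ instead of the increasing $g(s,t,k)$: this is exactly why the threshold separating $e=1$ from $e=2$ is $a(x+2,t,k,y)-a(x,t,k,y)-1$ rather than $c(s+2,t,k)-c(s,t,k)-3$, and it forces every line we attach to meet the rest of the configuration so as to create no new loop, thereby keeping the arithmetic genus fixed.

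First I would take a $6$-tuple $(X,Q,D_1,D_2,S_1,S_2)$ satisfying $N(x,t,k,y)$, with $X=C_{t,k}\sqcup Y$, $Y$ smooth non-special of degree $a(x,t,k,y)$ and genus $g-g_{t,k}$ (such a $Y$ exists by Lemma \ref{o+n2.0}), and $D_1,D_2\in|\Oo_Q(1,0)|$. As in Lemma \ref{+a2}, the output tuple will be produced after interchanging the two rulings, so $D_1',D_2'\in|\Oo_Q(0,1)|$ and one uses $\pi_1$ in place of $\pi_2$. I would then form $W:=X\cup J\cup\chi$, where $\chi=\bigcup_{o\in S_1\cup S_2}\chi(o)$ and $J$ is a union of $a(x+2,t,k,y)-a(x,t,k,y)$ lines, of which $e+1$ lie in $|\Oo_Q(1,0)|$ and the remaining $a(x+2,t,k,y)-a(x,t,k,y)-e-1$ lie in $|\Oo_Q(0,1)|$, with incidences prescribed exactly as in steps (a), (b), ({c}) of Lemma \ref{+a2}. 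The room to place these lines together with the point sets $S_i$ and $A_i$ is supplied by Lemma \ref{+n2.0} ($2a(x,t,k,y)\ge x+6$) and Lemma \ref{+n2} ($2(a(x+2,t,k,y)-a(x,t,k,y))\ge x+5$), which play the roles of Lemmas \ref{+na1} and \ref{aa+1}.

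Next I would prove the analogues of Claim 1 and Claim 2 from Lemma \ref{+a2}: by Lemma \ref{t1.1} and the regular dependence of a line on two of its points (the Side Remark in Lemma \ref{+a2}), $Y\cup J\cup\chi$ is exhibited as a flat degeneration of a family of smooth connected curves of degree $a(x+2,t,k,y)$ and genus $g-g_{t,k}$, disjoint from $C_{t,k}$. Here the genus bookkeeping is where the argument departs from Lemma \ref{+a2}: because $g-g_{t,k}$ does not depend on $x$, every attached line must meet the rest quasi-transversally in a single point, so that $p_a(Y\cup J\cup\chi)=p_a(Y)$. The cohomology then follows the template: one has $\Res_Q(W\cup A)=X\cup S_1\cup S_2$, whence $h^i(\Ii_{\Res_Q(W\cup A)}(x))=0$ by part ({c}) of $N(x,t,k,y)$, while the trace on $Q$ is the bidegree computation $h^i(Q,\Ii_{(W\cap Q)\cup A}(x+2,x+2))=h^i(Q,\Ii_{(X\cap(Q\setminus J))\cup A}(x-1,\,x+5+a(x,t,k,y)-a(x+2,t,k,y)))$ for $e=2$, which vanishes for $i=0,1$ once $X\cap(Q\setminus J)$ is taken general; genericity of these points is granted by $h^1(N_X(-2))=0$, valid here because $g-g_{t,k}\ge g(y,t,k)\ge 26$ (Remark \ref{ob2.1}, Remark \ref{o+aa1}), which lets me deform $X$ keeping the auxiliary set $E$ fixed.

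Finally I would split into the cases $e=1$, $e=2$ and $b(x,t,k,y)=0$ precisely as in steps (a), (a1), (a2), (b), ({c}) of Lemma \ref{+a2}, smoothing $W$ compatibly with the prescribed data $A_1,A_2$ (quoting Lemma \ref{aaa1} or \cite[Corollary 5.2]{hh}) so as to recover condition (b) of $N(x+2,t,k,y)$ with the rulings interchanged. The hard part will be the constant-genus bookkeeping together with the matching of cardinalities: I must check that attaching $J$ and the nilpotent structure $\chi$ leaves $p_a$ equal to $g-g_{t,k}$ throughout the degeneration, and that the prescriptions on $\sharp(S_2)$ and on the $A_i$ — which now involve the shifted threshold $a(x+2,t,k,y)-a(x,t,k,y)-1$ — remain consistent with $0\le b(x,t,k,y)\le x-1$ and with the positivity of the second entry $x+5+a(x,t,k,y)-a(x+2,t,k,y)$ of the bidegree on $Q$ (guaranteed by (\ref{eqd2.1+})).
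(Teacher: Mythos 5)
Your dictionary and overall template are the right starting point, and you correctly isolate the one constraint that distinguishes $N$ from $M$: the genus is now constant, so the attached lines must create no cycle. The gap is that the configuration you actually write down cannot meet this constraint. Set $w:=a(x+2,t,k,y)-a(x,t,k,y)$. You take $J$ to be $e+1$ lines of $|\Oo_Q(1,0)|$ --- that is, $D_1,D_2$ \emph{plus a new line} $D_0$ --- and $w-e-1$ lines of $|\Oo_Q(0,1)|$, ``with incidences prescribed exactly as in steps (a), (b), ({c}) of Lemma \ref{+a2}''. But the induction mechanism leaves no freedom here: $\chi$ must be supported exactly on $S_1\cup S_2$ (nowhere else, since $h^0(\Ii_{X\cup S_1\cup S_2}(x))=0$ implies that any extra embedded point in $\Res_Q(W\cup A)$ forces $h^1\ne 0$), and each $\chi(o)$ must sit at a node of $X\cup J$ for the flat-limit argument of Lemma \ref{t1.1} to apply; hence every $\pi_2$-fiber through a point of $S_1$ must be a component of $J$. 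For $e=2$ there are $\sharp(S_1)=w-2$ such fibers and you only have $w-3$ slots, so the configuration does not even exist. For $e=1$ the fibers may fit, but then the nodes $D_0\cap L_i$ (all of them), $Y\cap D_1$, $Y\cap L_i$ for the fibers over $S_1$ (forced by condition (b) of $N(x,t,k,y)$), and $D_1\cap L_j$ for the remaining lines are genuine, non-removable nodes, and counting vertices and edges of the dual graph gives $h^1(\Oo_W)\ge g+w-3$: once $D_0$ is present, cycles are unavoidable. Indeed, with Lemma \ref{+a2}'s incidences taken literally --- each second-ruling line meeting the rest at exactly two genuine points, one of them on $D_0$, as in Claim 2 there --- the genus increases by exactly $w-e-1$, which is precisely what assertion $M$ needs and assertion $N$ forbids. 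So your $W$ never satisfies condition (a) of $N(x+2,t,k,y)$; your correct remark that every attached line must meet the rest in a single point is incompatible with your own configuration, and the conflict is never resolved.

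The actual content of the paper's proof is the new configuration that resolves this: drop $D_0$ altogether and use $e$ lines of the first ruling and $w-e$ of the second. For $e=2$, $J=D_1\cup D_2\cup(L_1\cup\cdots\cup L_{w-2})$ with $L_1,\dots,L_{w-2}$ the fibers through $S_1$ (those through $S_2$ listed first); for $e=1$, $J=D_1\cup(L_1\cup\cdots\cup L_{w-1})$ with the first $b(x,t,k,y)$ of the $L_i$ the fibers through $S_1$ and the rest general. The non-$\chi$ nodes then form a tree ($Y$--$D_1$, $Y$--$D_2$, $Y$--$L_i$ for the fibers over $S_2$, resp.\ $S_1$, and $D_2$--$L_i$, resp.\ $D_1$--$L_j$, for the remaining lines), so $W=X\cup J\cup\chi$ has degree $d_{t,k}+a(x+2,t,k,y)$ and $h^1(\Oo_W)=g$; the smoothing and the residual computation then do run as in Lemma \ref{+a2}, with trace bidegree $(x,\,x+4-w)$ on $Q$ rather than your $(x-1,\,x+5-w)$. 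The shifted threshold $b(x,t,k,y)\le w-1$ that you noticed is exactly the numerical trace of this change: it says the fibers through $S_1$ fit among $w-1$ second-ruling lines, i.e.\ that the ruling counts are $e$ and $w-e$, not $e+1$ and $w-e-1$.
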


\begin{proof}
We outline the modifications of the proof of Lemma \ref{+a2} needed to get Lemma \ref{+g1}. Let $e\in \{1,2\}$ (resp. $f\in \{1,2\}$) be the integer arising in $N(x,t,k,y)$ (resp. $N(x+2,t,k,y)$).  Take $(X,Q,D_1,D_2,S_1,S_2)$ satisfying $N(x,t,k,y)$. Set $w:= a(x+2,t,k,y)-a(x,t,k,y)$. 

\quad (a) Assume $e=2$. Set $z:= b(x+2,t,k,y)+2-w$. Since $b(x+2,t,k,y) \le x+1$, Lemma \ref{+n2} gives $z \le w-2$. Let $L_i\in |\Oo _Q(0,1)|$, $1\le i \le w-2$, be the lines such that 
$S_1 =D_1\cap (L_1\cup \cdots \cup L_{w-2})$ and $S_2 = D_2\cap (L_1\cup \cdots \cup L_z)$. Set $J:= D_1\cup D_2\cup (\bigcup _{i=1}^{w-2} L_i)$ and $\chi := \cup _{o\in S_1\cup S_2} \chi (o)$. Condition (b) gives $\sharp (L_i\cap Y) =1$ for all $i$. Condition (a) gives $C_{t,k}\cap J=\emptyset$. Hence $W:= X\cup J\cup \chi$ is a smoothable curve of degree $a(x+2,t,k,y)$
with $h^1(\Oo _W)=g$.

\quad (b) Assume $e=1$, i.e. assume $d(x+2,t,k,y)\le w-1$. Let $L_i\in |\Oo _Q(0,1)|$, $1\le i \le b(x,t,k,y)$, be the lines such that $S_1 =D_1\cap (L_1\cup \cdots \cup L_{b(x,t,k,y)})$. Take general lines $L_j\in |\Oo _Q(0,1)|$, $b(x,t,k,y)<j\le w-1$.
Set $J:= D_1\cup (\bigcup _{i=1}^{w-1} L_i)$ and $\chi := \cup _{o\in S_1} \chi (o)$. Condition (a) gives $C_{t,k}\cap J=\emptyset$. Hence $W:= X\cup J\cup \chi$ is a smoothable curve of degree $a(x+2,t,k,y)$
with $h^1(\Oo _W)=g$.\end{proof}

 \begin{lemma}
 $N(y+2,t,k,y)$ is true.
 \end{lemma}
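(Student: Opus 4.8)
The plan is to obtain $N(y+2,t,k,y)$ from $M(y,t,k)$ by a single application of the quadric-and-lines degeneration of Lemma \ref{+a2}. First, $M(y,t,k)$ is available: since $y\ge t+k+5$ and $y\equiv t+k+1\pmod 2$, Lemma \ref{+a1} and the induction of Lemma \ref{+a2} give a $6$-tuple $(X,Q,D_1,D_2,S_1,S_2)$ with $X=C_{t,k}\sqcup Y_0$, $Y_0$ smooth of degree $c(y,t,k)$ and genus $g(y,t,k)$, and $h^i(\Ii_{X\cup S_1\cup S_2}(y))=0$ for $i=0,1$. The target of $N(y+2,t,k,y)$ is a level-$(y+2)$ configuration whose moving curve is smooth, non-special, of degree $a(y+2,t,k,y)$ and genus $g-g_{t,k}$; such a curve exists by the remark following Lemma \ref{o+n2.0}, and $a(y+2,t,k,y)\ge (g-g_{t,k})+3$ by Lemma \ref{o+n2.0}, so its generic member is non-special.

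Next I would build the degenerate curve on $Q$, exchanging the two rulings as in Lemma \ref{+a2}. To $Y_0$ I attach a family $J$ of $a(y+2,t,k,y)-c(y,t,k)$ lines taken from the two rulings, together with the infinitesimal neighbourhoods $\chi:=\bigcup_{o\in S_1\cup S_2}\chi(o)$, forming a connected nodal curve with embedded points $W:=X\cup J\cup \chi$ of degree $a(y+2,t,k,y)$. The lines are split between the rulings so that the $Y_0$-component of $W$ has arithmetic genus exactly $g-g_{t,k}$: the needed increment $(g-g_{t,k})-g(y,t,k)$ is non-negative since $g\ge g_{t,k}+g(y,t,k)$, and it is realizable because $g< g_{t,k}+g(y+2,t,k)$ while Lemma \ref{+n3} furnishes at least $(y+5)/2$ lines with which to create cycles. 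This is the bookkeeping of Claims 1 and 2 of Lemma \ref{+a2}, and the same flat-limit arguments (Lemma \ref{t1.1} for the nilpotents) show that $W$ is a flat degeneration of a disjoint union of $C_{t,k}$ and a smooth curve of degree $a(y+2,t,k,y)$ and genus $g-g_{t,k}$.

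For the cohomology I would mimic step (a) of Lemma \ref{+a2}. Here $\Res_Q(W\cup A)=X\cup S_1\cup S_2$, so $h^i(\Ii_{\Res_Q(W\cup A)}(y))=0$ by $M(y,t,k)$; on $Q$ the trace $(W\cap Q)\cup A$ is, after deforming $X$ keeping the prescribed points fixed (legitimate since $h^1(N_X(-2))=0$ by Remark \ref{o+aa1}, whose hypothesis holds as $g-g_{t,k}\ge g(y,t,k)\ge 26$), a general set of points of the correct bidegree, whence $h^i(Q,\Ii_{(W\cap Q)\cup A}(y+2,y+2))=0$. The residual sequence of $Q$ then gives $h^i(\Ii_{W\cup A}(y+2))=0$. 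Finally I smooth $W$ to $X'=C_{t,k}\sqcup Y$, with $Y$ smooth non-special of the prescribed degree and genus, following a section of quadric points that produces the new $D_1',D_2'\in|\Oo_Q(0,1)|$ and $S_1',S_2'$ of condition (b); this is verbatim steps (a1)--(a2) of Lemma \ref{+a2}, and by semicontinuity $h^i(\Ii_{X'\cup S_1'\cup S_2'}(y+2))=0$ is preserved, establishing (a), (b), (c).

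The main obstacle, and the one place where the argument really differs from Lemma \ref{+a2}, is that the target genus $g-g_{t,k}$ is now prescribed by the fixed global genus $g$ rather than being rigidly linked to the degree. One must check that this genus lies in $[g(y,t,k),g(y+2,t,k))$, so that it exceeds the starting genus $g(y,t,k)$ yet is attainable with the $a(y+2,t,k,y)-c(y,t,k)$ available lines, and that the remainder $b(y+2,t,k,y)$ of (\ref{eqd2}) is split into $S_1',S_2'$ so as to seed the induction of Lemma \ref{+g1}. These are precisely the numerical facts guaranteed by the maximality of $y$, the defining relation (\ref{eqd2}), and Lemmas \ref{+n3} and \ref{o+n2.0}, so the remaining work is arithmetic rather than geometric.
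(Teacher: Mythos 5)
Your proposal is correct and takes essentially the same route as the paper, whose entire proof is the one-line instruction to rerun the constructions of Lemmas \ref{+a2} and \ref{+g1} starting from a $6$-tuple satisfying $M(y,t,k)$ and to quote Lemma \ref{+n3} (in place of Lemma \ref{+n2}) for the supply of lines. Your expanded account — the quadric-and-lines degeneration with the rulings exchanged, the residual sequence on $Q$, the smoothing via $h^1(N_X(-2))=0$, and the identification of the genus bookkeeping forced by the fixed $g$ as the only new issue, settled by the maximality of $y$, relation (\ref{eqd2}), and Lemmas \ref{+n3} and \ref{o+n2.0} — is exactly the intended argument, spelled out in more detail than the paper itself provides.
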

 
 \begin{proof}
Use the proof of Lemma \ref{+a2} and Lemma \ref{+g1} starting with $(X,Q,D_1,D_2,S_1,S_2)$ satisfying $M(y,t,k)$ and quoting Lemma \ref{+n3} instead of Lemma \ref{+n2}.\end{proof}

 
\section{Proving Conjecture \ref{main}}\label{Sm}
 
In order to prove Theorem \ref{bingo} and Corollary \ref{cor}, first of all we notice that from the previous section we could deduce with a small effort the following two facts, but that
(as explained at the end of the introduction) they would not prove Theorem \ref{bingo} and Corollary \ref{cor}.  
 
For each integer $d$ such that $g-3 \le d \le d(m,g)_{\mathrm{max}}$ there exists a smooth and connected 
curve $X_1\subset \PP^3$ such that $\deg (X_1) =d$, $g(X)=g$, $h^1(\Oo _{X_1}(m-2)) =0$, $h^1(\Ii _{X_1}(m))=0$ and $h^1(N_{X_1}(-1))=0$. 

For each integer $d\ge d(m,g)_{\mathrm{min}}$ there exists a smooth and connected curve $X_2\subset \PP^3$ such that
$\deg (X_2) =d$, $g(X)=g$, $h^1(\Oo _{X_2}(m-2)) =0$,  $h^0(\Ii _{X_2}(m-1))=0$ and $h^1(N_{X_2}(-1))=0$.
 
Now fix an integer $d$ such that $d(m,g)_{\mathrm{min}} \le d \le d(m,g)_{\mathrm{max}}$. To prove Theorem \ref{bingo} for the pair $(d,g)$ it is sufficient to prove that we may find $X_1,X_2$
as above and with the additional condition that $X_1$ and $X_2$ are in the same irreducible component, $\Gamma$, of $\mathrm{Hilb}(\PP^3)$. If we prove this statement, then
by the semicontinuity theorem for cohomology (\cite[III.8.8]{hart}) we get $h^1(\Ii _X(m)) =0$ and $h^0(\Ii _X(m-1))=0$, hence we would conclude the proof for the pair $(d,g)$.
To get $X_1$ and $X_2$ in the same  irreducible component of $\mathrm{Hilb}(\PP^3)$ we need to rewrite the proofs of the previous section with a few improvements. But first we need to
distinguish between the case in which $d$ is very near to $d(m,g)_{\mathrm{min}}$ and the case in which $d$ is very near to $d(m,g)_{\mathrm{max}}$.
In the first case (say $d(m,g)_{\mathrm{min}} \le d\le d'$) we will modify the proof of the existence of $X_2$ with $h^0(\Ii _{X_2}(m-1))=0$ to get (for the same curve $X_2$) also $h^1(\Ii _{X_2}(m)) =0$.
If $d$ is very near to $d(m,g)_{\mathrm{max}}$ (say $d'' \le d \le d(m,g)_{\mathrm{max}}$) we will modify the proof of the existence of the curve $X_1$ to get a curve $X_1$ with $h^1(\Ii _{X_1}(m))=0$ and $h^0(\Ii _{X_1}(m-1))=0$. We use that $N(x,t,k,y)$ are true for $x=m-5,m-4,m-3,m-2$ (Lemma \ref{n9}).
 
Set $\varepsilon :=0$ if $m$ is odd and $\varepsilon :=1$ if $m$ is even.

\subsubsection{Near $d(m,g)_{\mathrm{min}}$}
 
 In this range the most difficult part is the proof of the existence of $X_2$. It is the construction of $X_2$ which says in which $W(t',k',d',b')$ we will try to find $X_1$.
 Recall that to get a curve $X_2$ with $h^0(\Ii _{X_2}(m-1)) =0$ we started with a curve $C_{t,t-\varepsilon}$ with $h^i(\Ii _{C_{t,t-\varepsilon}}(2t-1-\varepsilon)) =0$, where $t$ is the maximal integer $t>0$ such that such that $g_{t,t-\varepsilon} +g(2t+5-\varepsilon ,t,t-\varepsilon) \le g$. Set $k:=t-\varepsilon$. Recall that an element $W$
 of $U(t,k,a_d,b)$ has degree $d$ and $h^1(\Oo _W)=g$ if and only if  $b = g -g_{t,k}$ and $a_d = d -d_{t,k}$.
 The component $W(t',k',d',b')$ is the component $W(t,k,a_d,b)$, where
 $b = g -g_{t,k}$ and $a_d = d -d_{t,k}$. The curve $T$ satisfying $N(m-1,t,k,y)$ has $h^1(\Oo _T)=g$, $3$ connected components, $h^0(\Ii _T(m-1)) =b(m-1,t,k,y)$ and $h^1(\Ii _T(m-1)) =0$, hence $d> a(m-1,t,k,y)+d_{t,k}$. The minimum integer $d(m,g)_{\mathrm{min}}$ is $a(m-1,t,k,y)+d_{t,k}+1$, unless $b(m-1,t,k,y) \in \{m-2,m-1\}$ (in the latter case
 we have $d(m,g)_{\mathrm{min}} = a(m-1,t,k,y)+d_{t,k}+2$).

\quad (a)  We make the construction of Section \ref{horace} for the integer $m':= m-1 \equiv t+k-1 \pmod{2}$ and the integer $g$ (note that the numerology for $g$ in Theorem \ref{bingo} is such that
we may do the construction of Section \ref{horace} for $m':=m-1$ and the integer $g$). We get an integer $y\le m'-4 =m-5$ with $y\equiv t+k-1 \equiv 0 \pmod{2}$. Then for
all integers $x\ge y+2$ with $x\equiv y \pmod{2}$ we proved $N(x,t,k,y)$. Hence $N(m-5,t,k,y)$ and $N(m-3,t,k,y)$ are true (Lemma \ref{n9}). Since $d\ge d(m,g)_{\mathrm{min}}$,
we have $d > a(m-1,t,k,y)+d_{t,k}$, hence we want to add in a smooth quadric $Q$ a certain union of $d-a(m-3,t,k,y) -d_{t,k}$ lines. We write $C_t\cup C'_k$ for a general (but fixed in this construction) $C_{t,k}$, because we need to distinguish the two connected components of $C_{t,k}$, even
when $k=t$.

\quad (a1) Assume $d = d(m,g)_{\mathrm{min}}= a(m-1,t,k,y)+d_{t,k} +1$. Set $z:= d-a(m-3,t,k,y) -d_{t,k} = 1+ a(m-1,t,k,y) -a(m-3,t,k,y)$.  We need to modify
$N(m-3,t,k,y)$ in the following way.

\quad {\bf{Assertion}} $N'(m-3,t,k,y)$, $m-3\equiv y\pmod{2}$:  Set $e=1$ if $b(m-3,t,k,y) \le z-3$ and $e=2$ if $b(m-3,t,k,y) \ge z-2$. There
 is a $6$-tuple
  $(X,Q,D_1,D_2,S_1,S_2)$ such that
 \begin{itemize}
 \item[(a)] $Q$ is a smooth quadric surface, $X = C_t\sqcup C'_k\sqcup Y$, $Y$ is a smooth curve of degree $a(m-3,t,k,y)$ and genus $g-g_{t,k}$
 and $Q$ intersects transversally
 $X$, with no line of $Q$ containing $\ge 2$ points of $X\cap Q$;
 \item[(b)] $D_1,D_2$ are different elements of $|\Oo _Q(1,0)|$, $D_1\cap C_t\ne \emptyset$, $D_2\cap C_k\ne \emptyset$, $S_i\subset D_i\setminus D_i\cap (C_t\cup C'_k)$, $1\le i \le 2$,
 and $\sharp (S_1)+\sharp (S_2) =b(m-3,t,k,y)$; $\pi _2(S_2) \subseteq \pi _2(S_1)$, $\pi _2(S_e)\subset \pi _2(Y\cap (Q\setminus (D_1\cup D_2)))$;
$S_2=\emptyset$ if $e=1$, $\sharp (S_2) = b(m-3,t,k,y) -z +3$ if $e=2$;
 \item[({c})] $h^i(\Ii _{X\cup S_1\cup S_2}(m-3)) =0$, $i=0,1$.
 \end{itemize} 

 As in the proof of Lemma \ref{+a2} and Lemma \ref{+g1} we get $(X,Q,D_1,D_2,S_1,S_2)$, $X = C_t\sqcup C'_k\sqcup Y$ satisfying $N'(m-3,t,k,y)$; in the proof of Lemma \ref{+a2} we take $R_1$ containing a point of $C_t\cap Q$ instead of a point of $Y\cap Q$ and $R_2$ containing
a point of $C'_k\cap Q$ instead of a point of $Y\cap Q$.

\quad (a1.1) Assume $b(m-3,t,k,y)=0$. Take $D_0\in |\Oo _Q(1,0)|$ containing one point of $Y\cap Q$, $L_1\in |\Oo _Q(0,1)|$ containing a point of $C_t$, $L_2\in |\Oo _Q(0,1)|$
containing a point of $C'_k$ and general $L_i\in |\Oo _Q(0,1)|$, $3\le i\le z-1$. Set $J:= D_0\cup (\bigcup _{i=1}^{z-1} L_i)$. Since $X\cap (Q\setminus J)$ is a general subset
of $Q$ with cardinality $2d_{t,k} +2a(m-3,t,k,y)-3$,
we have $h^0(Q,\Ii _{Q\cap (X\cup J)}(m-1)) = h^0(Q,\Ii _{X\cap (Q\setminus J)}(m-2,m-z)) =0$ (use (\ref{eqd2}) for $x=m-3$, that $z= 1+a(m-1,t,k,y)-a(m-3,t,k,y)$ and that $b(m-1,t,k,y)\le m-2$). Since $\Res _Q(X\cup Y) = X$ and $h^0(\Ii _X(m-3)) =0$,
we have $h^0(\Ii _{X\cup J}(m-1)) =0$. The union $X\cup J$ is a nodal and connected smoothable curve of degree $d$ and arithmetic genus $g$ and $Y\cup J$ is a connected smoothable curve of degree $d-d_{t,k}$ and arithmetic genus $g-g_{t,k}-2\ge 26$. We may smooth $Y\cup J$ in a family of curves, all of them containing the two points $(C_t\cup C'_k)\cap J$. Call $E$ a general element of this smoothing. Since $\mathrm{Aut}(\PP^3)$ is $2$-transitive, we may see $E$ as a general non-special space curve of its degree and its genus $\ge 26$. By construction and Lemma \ref{aaa1} we have
$C_t\cup C'_k\cup E\in U(t,k,a_d,b)$ and $h^1(N_{C_t\cup C'_k\cup E}(-1)) =0$. By semicontinuity there is a smooth $X_2\in W(t,k,a_d,b)$ with 
$h^0(\Ii _{X_2}(m-1)) =0$ and $h^1(N_{X_2}(-1)) =0$.

\quad (a1.2) Assume $0 < b(m-3,t,k,y) \le z-3$. Hence $S_2=\emptyset$. We take $D_1$ and call $L_i\in |\Oo _Q(0,1)|$, $1\le i \le b(m-3,t,k,y)$, the elements of $|\Oo _Q(0,1)|$
such that $S_1 =D_1\cap (L_1\cup \cdots \cup L_{b(m-3,t,k,y)})$; note that each line $L_i$ contains a point of $Y\cap Q$. Take any $L_{b(m-3,t,k,y)+1}\in |\Oo _Q(0,1)|$ with $C'_k\cap L_{b(m-3,t,k,y)+1}\ne \emptyset$, any $L_{b(m-3,t,k,y)+2}\in |\Oo _Q(0,1)|$ with $Y\cap L_{b(m-3,t,k,y)+2} \ne \emptyset$, $L_{b(m-3,t,k,y)+2}\ne L_i$ for
$i\le b(m-3,t,k,y)$ and (if $b(m-3,t,k,y) <z-3$) take general $L_j\in |\Oo _Q(0,1)|$, $b(m-3,t,k,y)+3 \le j\le z-1$. Set $J:= D_1\cup (\bigcup _{i=1}^{z-1} L_i)$, $\chi := \cup _{o\in S_1} \chi (o)$
and $W:= X\cup J\cup \chi$. We have $\Res _Q(W) =X\cup S_1$ and thus $h^0(\Ii _{\Res _Q(W)}(m-3)) =0$. Since $W\cap Q$ is the union of $J$
and $2d_{t,k} +2a(m-3,t,k,y) -b(m-3,t,k,y)-3$ general points of $Q$ and $b(m-1,t,k,y) \le m-1$, (\ref{eqd2.1+}) gives $h^0(Q,\Ii _{W\cap Q}(m-1)) = h^0(Q,\Ii _{X\cap (Q\setminus J)}(m-2,m-z)) =0$. Thus
$h^0(\Ii _W(m-1)) =0$. We first deform $W$ to the union $F$ of $C_t\cup C'_k\cup D_1\cup Y\cup (\bigcup _{i=b(m-3,t,k,y)+1}^{z-1}L_i)$ and $b(m-3,t,k,y)$ disjoint lines $M_1,\dots ,M_{b(m-3,t,k,y)}$, each of them containing one point of $Y$. The union $F$
is a nodal and connected curve. Write $F =C_t\cup C'_k\cup G$. We have $\sharp (G\cap C_t) =\sharp (G\cap C'_k) =1$. Let $G'$ be a general smoothing of $G$ fixing
the $2$ points of $(C_t\cup C'_k)\cap G$.  $C_t\cup C'_k\cup G'\in U(t,k,a_d,b)$. By Lemma  \ref{aaa1} and
semicontinuity  there is a smooth $X_2\in W(t,k,a_d,b)$ with $h^0(\Ii _{X_2}(m-1)) =0$
and $h^1(N_{X_2}(-1)) =0$.

\quad (a1.3) Assume $b(m-3,t,k,y)\ge z-2$. Since $z= a(m-1,t,k,y)-a(m-3,t,k,y)+1$ and $b(m-3,t,k)) \le m-4$, Lemma \ref{+n2} gives $2(z-3) \ge b(m-3,t,k,y)$. Let $L_i\in |\Oo _Q(0,1)|$, $1\le i\le z-3$, be the lines such that $S_1=D_1\cap (\bigcup _{i=1}^{z-3} L_i)$ and $S_2:= D_2\cap (\bigcup _{i=1}^{w} L_i)$. Take $L_{z-2}\in |\Oo _Q(0,1)|$
containing one point of $Y\cap Q$ and different from the other lines $L_i$, $i\le z-3$. Set $J:= D_1\cup D_2\cup (\bigcup _{i=1}^{z-2} L_i)$, $\chi := \cup _{o\in S_1} \chi (o)$
and $W:= X\cup J\cup \chi$. We have $\Res _Q(W) =X\cup S_1\cup S_2$ and thus $h^0(\Ii _{\Res _Q(W)}(m-3)) =0$. Since $W\cap Q$ is the union of $J$
and $2d_{t,k} +2a(m,t,k,y) -w-3$ general points of $Q$ and $b(m-1,t,k,y) \le m-1$ (\ref{eqd2.1+}) gives $h^0(Q,\Ii _{W\cap Q}(m-1)) = h^0(Q,\Ii _{X\cap (Q\setminus J)}(m-2,m-z)) =0$. Thus
$h^0(\Ii _W(m-1)) =0$. We first deform $W$ to the union $F$ of $C_t\cup C'_k\cup D_1\cup D_2\cup Y\cup (\bigcup _{i=w+1}^{z-2}L_i)$ and $w$ disjoint lines $M_1,\dots ,M_{w}$, each of them containing one point of $Y$. The union $F$
is a nodal and connected curve. Write $F =C_t\cup C'_k\cup G$. We have $\sharp (G\cap C_t) =\sharp (G\cap C'_k) =1$. Let $G'$ be a general smoothing of $G$ fixing
the $2$ points of $(C_t\cup C'_k)\cap G$.  We have $C_t\cup C'_k\cup G'\in U(t,k,a_d,b)$. By Lemma  \ref{aaa1} and
semicontinuity  there is a smooth $X_2\in W(t,k,a_d,b)$ with $h^0(\Ii _{X_2}(m-1)) =0$
and $h^1(N_{X_2}(-1)) =0$.

\quad (a1.4) Assume  $d(m,g)_{\mathrm{min}} = a(m-1,t,k,y)+d_{t,k}+2$. We are in the set-up of step (a1.3) with the integer $z':= a(m-1,t,k,y)-a(m-3,t,k,y)+2$ instead of the integer $z:= a(m-1,t,k,y)-a(m-3,t,k,y)+1$.

\quad (a2) Assume $d> d(m,g)_{\mathrm{min}}$ and set $w:= d-d(m,g)_{\mathrm{min}}$. By step (a1)
there is a nodal curve
$E =C_t\cup C'_k\cup F\in U(t,k,a_d-w,b)$ with  $\sharp (C_t\cap F) =\sharp (C'_k\cap F)=1$, $C_t\cap D'_k=\emptyset$, $F$ 
and $h^0(\Ii _E(m-1))=0$. Take a general union $G$ of $F$ and $w$ lines, each of them meeting $F$ at exactly one point and quasi-transversally. By construction
$E':= C_t \cup C'_k \cup G$ is nodal and $C_t \cap G =C_t \cap F$, $C'_k \cap G = C'_k \cap F$.
Since $h^0(\Ii _E(m-1)) =0$ and $E'\supset E$, we have $h^0(\Ii _{E'}(m-1))=0$. We may smooth $G$ keeping fixed the points $C_t\cap F$ and $C'_k\cap F$,
because $\mathrm{Aut}(\PP^3)$ is $2$-transitive. Hence there is a non-special smooth curve $G''$ of  degree $d -d_{t,k}$ and genus $g -g_{t,k}$
with $C_t\cap G'' = C_t\cap F$, $C'_k\cap G'' = C'_k\cap F$ and which is a general member of a family with $F'$ as its special member and with $C_t\cup C'_k\cup G''$ nodal. By semicontinuity
we have $h^0(\Ii _{C_t\cup C'_k\cup G''}(m-1)) =0$. We have $C_t\cup C'_k\cup G''\in U(t,k,a_d,b)$.

\quad (b) Set $\alpha := t(t-2)$ if $k=t$ and $\alpha := t^2-3t+1$ if $k=t-1$. Fix a plane $H$, a smooth conic $D\subset H$ and general $C_{t,k}$. We have $D\cap C_{t,k} =\emptyset$ and $C_{t,k}\cap H$ is a general subset of $H$ with cardinality
$d_{t,k}$. Hence $h^0(H,\Ii _{H\cap (C_{t,k} \cup D)}(t+k)) = h^0(H,\Ii _{C_{t,k}\cap H}(t+k-1)) =\binom{t+k+1}{2} - d_{t,k} = \alpha$ and $h^1(H,\Ii _{H\cap (C_{t,k} \cup D)}(t+k))=0$.
Then we continue the construction from the critical value $t+k$ to the critical value $t+k+2$, then to the critical value $t+k+4$, and so on up to the critical value $m-2$; in each step, say to arrive at the  critical value $x$ from a curve $A'$ and a set $S'$ with $h^1(\Ii _{A'\cup S'}(x-2)) =0$ and $h^0(\Ii _{A'\cup S'}(x-2)) =\alpha$ and $0\le \sharp (S')\le x-3$
(and so $\sharp (S') =\binom{x+1}{3} -(x-2)\deg (A')-3+g -\alpha$ ; we have bijectivity inside $Q$ and get a curve $A''$ and a set $S''$ with $h^1(\Ii _{A''\cup S''}(x)) =0$ and $h^0(\Ii _{A''\cup S''}(x)) \le \alpha$. In the last step we also need to connect the connected components of the curve and get an element $B\in U(t,k,a',b)$ for some $a'$; we need to check that at each step the numerical conditions are satisfied.   Call $(X,Q,D_1,D_2,S_1,S_2)$ the curve we get for $\Oo _{\PP^3}(m-2)$ and either $e=1$ or $e=2$. Set $S:= S_1\cup S_2$
and $\alpha ':= \sharp (S)$. We have $0\le \alpha '\le m-3$. Since $S$ is a union of connected components of $X\cup S$, the restriction map $H^0(\Oo _{X\cup S}(m-2)) \to H^0(\Oo _X(m-2)) $ is surjective and its kernel
has dimension $\sharp (S) $. Since $h^1(\Ii _{X\cup S}(m-2)) =0$, we have $h^1(\Ii _X(m-2)) =0$ and $h^0(\Ii _X(m-2) =\alpha +\alpha ' \le \alpha +m-3$. We cover in this way the integers $d$ such that $\binom{m+3}{3} +g-1-dm \ge \alpha +m-3$. Hence we cover all $d$ such that $d(m,g)_{\mathrm{max}} -d \ge 1+ \lfloor \alpha /m\rfloor$. If $t\le m/4$ we have $\alpha /m \le m/4$.

\subsubsection{Near $d(m,g)_{\mathrm{max}}$}

In this range the most difficult part is the existence of $X_1$ with $h^1(\Ii _{X_1}(m)) =0$ and it is this part which dictates the component $W(t',k',a',b')$ in 
which we will find both $X_1$ and $X_2$. We stress that the integers $t, k$ introduced in this subsection are not the same as in the previous one and hence also $y$ may be different.

\quad (a) In this step we prove the existence of $X_1$. We start with the maximal integer $k$ such that $g_{k+1-\varepsilon,k} +g(2k+6-\varepsilon, k+1-\varepsilon,k)\le g$ and set $t:= k+1-\varepsilon$. We use $N(x,t,k,y)$.
In particular we have $N(m-4,t,k,y)$ and $N(m-2,t,k,y)$. Set $a_d:= d-d_{t,k}$ and $b:= g-g_{t,k}$.
In this step we prove the existence of $A\in U(t,k,a_d,b)$ with $h^1(\Ii _A(m)) =0$, hence by semicontinuity the existence of $X_1\in W(t,t-1,a_d,b)$ with $h^1(\Ii _{X_1}(m))=0$.
Set $z:= d-a(m-2,t,k,y)-d_{t,k}$. We write $C_t\cup C'_k$ for a general (but fixed in this construction) $C_{t,k}$, because we need to distinguish the two connected components, even
when $k=t$. Recall that we have (\ref{eqi1}).

\quad (a1) Assume $d=d(m,g)_{\mathrm{max}}$. Let $T$ be any curve satisfying $N(m,t,k,y)$. We have $\deg (T) =d_{t,k} +a(m,t,k,y)$, $h^1(\Oo _T)=g$, $h^1(\Oo _T(m))=0$, $T$ has $3$ connected components,
$h^1(\Ii _T(m)) =0$ and $h^0(\Ii _T(m)) =b(m,t,k,y)$. By (\ref{eqi1}) we have $d = a(m,t,k,y)+d_{t,k}$ if $b(m,t,k,y) \le m-3$ and $d=a(m,t,k,y)+d_{t,k}+1$ if $m-2\le b(m,t,k,y) \le m-1$.
Hence $a(m,t,k,y) -a(m-2,t,k) \le z\le a(m,t,k,y) -a(m-2,t,k,y)+1$.  Call $\eta$ the difference between the right hand side and the left hand side of (\ref{eqi1}).

 \quad {\bf{Assertion}} $N''(m-2,t,k,y)$, $m\equiv y\pmod{2}$:  Set $e=1$ if $b(m-2,t,k,y) \le z-3$ and $e=2$ if $b(x,t,k,y) \ge z-2$. There
 is a $6$-tuple
  $(X,Q,D_1,D_2,S_1,S_2)$ such that
 \begin{itemize}
 \item[(a)] $Q$ is a smooth quadric surface, $X = C_t\sqcup C'_k\sqcup Y$, $Y$ is a smooth curve of degree $a(m-2,t,k,y)$ and genus $g-g_{t,k}$
 and $Q$ intersects transversally
 $X$, with no line of $Q$ containing $\ge 2$ points of $X\cap Q$;
 \item[(b)] $D_1,D_2$ are different elements of $|\Oo _Q(1,0)|$, $D_1\cap C_t\ne \emptyset$, $D_2\cap C'_k\ne \emptyset$, $S_i\subset D_i\setminus D_i\cap (C_t\cup C'_k)$, $1\le i \le 2$,
 and $\sharp (S_1)+\sharp (S_2) =b(x,t,k,y)$; $\pi _2(S_2) \subseteq \pi _2(S_1)$ and $\pi _2(S_e)\subset \pi _2(Y\cap (Q\setminus (D_1\cup D_2)))$;
$S_2=\emptyset$ if $e=1$, $\sharp (S_2) = b(m-2,t,k,y) -z +2$ if $e=2$;
 \item[({c})] $h^i(\Ii _{X\cup S_1\cup S_2}(x)) =0$, $i=0,1$.
 \end{itemize}

 As in the proof of Lemma \ref{+a2} and Lemma \ref{+g1} we get $(X,Q,D_1,D_2,S_1,S_2)$, $X = C_t\sqcup C'_k\sqcup Y$ satisfying $N''(m-2,t,k,y)$; in the proof of Lemma \ref{+a2} we take $R_1$ containing a point of $C_t\cap Q$ instead of a point of $Y\cap Q$ and $R_2$ containing
a point of $C'_k\cap Q$ instead of a point of $Y\cap Q$.

 \quad (a1.1) Assume $b(m-2,t,k,y)=0$. Take $z-1$ distinct lines $L_i\in |\Oo _Q(0,1)|$, $1\le i\le z-1$, such that $L_i\cap C_t=\emptyset$ for all $i$, $L_i\cap C'_k \ne \emptyset$
 if and only if $i=1$ and $L_i\cap Y\ne \emptyset$ if and only if $i=2$. Set $J:= D_1\cup (\bigcup _{i=1}^{z-1} L_i)$. Since $X\cap (Q\setminus J)$ is a general subset
of $Q$ with cardinality $2d_{t,k} +2a(m-3,t,k,y)-3$,
we have $h^1(Q,\Ii _{Q\cap (X\cup J)}(m)) = h^1(Q,\Ii _{X\cap (Q\setminus J)}(m-1,m+1-z)) =0$ (use the generality
of $X\cap (Q\setminus J)$ and the difference between (\ref{eqi1}) and the case $x:= m-2$ of (\ref{eqd2}), which gives an upper bound for
$\sharp (X\cap (Q\setminus J))$; we get an equality if and only if $\eta =0$, i.e. $b(m,t,k,y) =m-2$ and $d= a(m,t,k,y)+d_{t,k}+1$). Since $\Res _Q(X\cup J) = X$ and $h^1(\Ii _X(m-2)) =0$,
we have $h^1(\Ii _{X\cup J}(m)) =0$. The union $X\cup J$ is a nodal and connected smoothable curve of degree $d$ and arithmetic genus $g$ and $Y\cup J$ is a smooth and connected
curve of degree $d-d_{t,k}$ and arithmetic genus $g-g_{t,k}-2\ge 26$. We may smooth $Y\cup J$ in a family of curves, all of them containing the two points $(C_t\cup C'_k)\cap J$. Call $E$ a general element of this smoothing. Since $\mathrm{Aut}(\PP^3)$ is $2$-transitive, we may see $E$ as a general non-special space curve of its degree and its genus $\ge 26$. By construction and Lemma \ref{aaa1} we have
$C_t\cup C'_k\cup E\in U(t,k,a_d,b)$ and $h^1(N_{C_t\cup C'_k\cup E}(-1)) =0$. By semicontinuity there is a smooth $X_1\in W(t,k,a_d,b)$ with $h^1(\Ii _{X_1}(m)) =0$
and $h^1(N_{X_1}(-1)) =0$.

\quad (a1.2) Assume $0 < b(m-2,t,k,y) \le z-3$.  Hence $S_2=\emptyset$. We take $D_1$ and call $L_i\in |\Oo _Q(0,1)|$, $1\le i \le b(m-2,t,k,y)$, the elements of $|\Oo _Q(0,1)|$
such that $S_1 =D_1\cap (L_1\cup \cdots \cup L_{b(m-2,t,k,y)})$; note that each line $L_i$ contains a point of $Y\cap Q$. Take any $L_{b(m-2,t,k,y)+1}\in |\Oo _Q(0,1)|$ with $C'_k\cap L_{b(m-2,t,k,y)+1}\ne \emptyset$, any $L_{b(m-2,t,k,y)+2}\in |\Oo _Q(0,1)|$ with $Y\cap L_{b(m-2,t,k,y)+2} \ne \emptyset$, $L_{b(m-2,t,k,y)+2}\ne L_i$ for
$i\le b(m-2,t,k,y)$ and (if $b(m-2,t,k,y) <z-3$) take general $L_j\in |\Oo _Q(0,1)|$, $b(m-2,t,k,y)+3 \le j\le z-1$. Set $J:= D_1\cup (\bigcup _{i=1}^{z-1} L_i)$, $\chi := \cup _{o\in S_1} \chi (o)$
and $W:= X\cup J\cup \chi$. We have $\Res _Q(W) =X\cup S_1$ and thus $h^1(\Ii _{\Res _Q(W)}(m-2)) =0$. Since $\eta \ge 0$, (\ref{eqi1}) and the case $x=m-2$
of (\ref{eqd2.1+}) give $2d_{t,k} +2a(m,t,k,y) -b(m-2,t,k,y)-3 =m(m+3-z) -\eta \le h^0(Q,\Oo _Q(m-2,m+2-z))$. Since $W\cap Q$ is the union of $J$
and $2d_{t,k} +2a(m,t,k,y) -b(m-2,t,k,y)-3$ general points of $Q$, we have $h^1(Q,\Ii _{W\cap Q}(m)) = h^1(Q,\Ii _{X\cap (Q\setminus J)}(m-1,m+1-z)) =0$. Thus
$h^1(\Ii _W(m)) =0$. We first deform $W$ to the union $F$ of $C_t\cup C'_k\cup D_1\cup Y\cup (\bigcup _{i=b(m-3,t,k,y)+1}^{z-1}L_i)$ and $b(m-3,t,k,y)$ disjoint lines $M_1,\dots ,M_{b(m-3,t,k,y)}$, each of them containing one point of $Y$. The union $F$
is a nodal and connected curve. Write $F =C_t\cup C'_k\cup G$. We have $\sharp (G\cap C_t) =\sharp (G\cap C'_k) =1$. Let $G'$ be a general smoothing of $G$ fixing
the $2$ points of $(C_t\cup C'_k)\cap G$.  $C_t\cup C'_k\cup G'\in U(t,k,a_d,b)$. By Lemma  \ref{aaa1} and
semicontinuity  there is a smooth $X_2\in W(t,k,a_d,b)$ with $h^1(\Ii _{X_2}(m)) =0$
and $h^1(N_{X_2}(-1)) =0$.

\quad (a1.3) Assume $b(m-2,t,k,y)  \ge z-2$. Since $z\ge a(m,t,k,y)-a(m-2,t,k)$ and $b(m-2,t,k,y) \le m-3$, the case $x=m-2$ of Lemma \ref{+n2} gives $2(z-3)\ge b(m-2,t,k,y)$.
Set $w:= b(m-2,t,k)-z+3$.
Let $L_i\in |\Oo _Q(0,1)|$, $1\le i \le z-3$, be the line such that $S_1=D_1(\bigcup _{i=1}^{z-3} L_i)$ and $S_2:= D_2\cap (\bigcup _{i=1}^{w} L_i)$. Let $L_{z-2}\in |\Oo _Q(0,1)|$
be a line with $L_{z-2}\ne L_i$ for any $i\ne z-2$ and $L_{z-2}\cap Y \ne \emptyset$. Note that $L_j\cap Y\ne \emptyset$ if and only if either $j\le w$ or $j=z-2$.
Set $J:= D_1\cup D_2\cup (\bigcup _{i=1}^{z-2} L_i)$, $\chi := \cup _{o\in S_1\cup S_2} \chi (o)$
and $W:= X\cup J\cup \chi$ and continue as in the last step. 

\quad (a2) Assume $d<d(m,g)_{\mathrm{max}}$ We have $\eta \ge m(d(m,g)_{\mathrm{max}}-d)\ge m$ and in particular $\eta \ge m\ge  b(m-2,t,k,y)+2$. To prove the existence of $X_1$ in this component we only need that $z\ge 3$, i.e. that
$d\ge a_{m-2,t,k,y}+d_{t,k}+3$, which is true because $1+(m-1)d -g \ge \binom{m+2}{3}$ and $(m-1)(a(m-2,t,k,y)+d_{t,k}) +3-g = \binom{m+1}{2} -a(m-2,t,k) -d_{t,k} +b(m-2,t,k,y)
\ge 3m$. Take $(X,Q,D_1,D_2,S_1,S_2)$ satisfying $N(m-2,t,k,y)$ with $X = C_t\sqcup C'_k\sqcup Y$ and throw away $D_1$, $D_2$, $S_1$ and $S_2$. Fix $D\in |\Oo _Q(1,0)|$ containing one point of $Y\cap Q$ and $z-1$ distinct lines $L_i\in |\Oo _Q(0,1)|$ with $L_i\cap Y=\emptyset$ for all $i$,
$L_i\cap C_t\ne \emptyset$ if and only if $i=1$ and $L_i\cap C'_k\ne \emptyset$ if and only if $i=2$. Set $J:= D\cup (\bigcup _{i=1}^{z-1} L_i)$ and $W:= X\cup J$.
As in the previous steps it is sufficient to prove that $h^1(\Ii _W(m)) =0$. We have $\Res _Q(W) =X$ and thus $h^1(\Ii _{\Res _Q(W)}(m-2))=0$. Hence it is sufficient to prove
that $h^1(Q,\Ii _{W\cap Q}(m)) =0$. We have $h^1(Q,\Ii _{Q\cap W}(m)) = h^1(Q,\Ii _{X\cap (Q\setminus J)}(m-1,m+1-z))$. Since $X\cap Q$ is general in $Q$,
it is sufficient to prove that $\sharp (X\cap (Q\setminus J)) \le m(m+2-z)$. We have  $\sharp (X\cap (Q\setminus J)) =2d_{t,k} +2a(m-2,t,k,y)-3$.
By the definition of $\eta$ and (\ref{eqd2}) for $x=m-2$ we have $2d_{t,k} +2a(m-2,t,k,y)-3 =m(m+2-z) +b(m-2,t,k,y)+2 -\eta \le m(m+2-z)$.

\quad (b) In this part we get the existence of $A\in U(t,k,a_d,b)$ with $h^0(\Ii _A(m-1)) =0$, $\deg (A)=d$ and $p_a(A)=g$, hence by semicontinuity 
the existence of $X_2\in W(t,k,a_d,b)$ with $h^0(\Ii _{X_2}(m-1)) =0$. We have $h^i(\Ii _{C_{t,k}}(t+k-1)) =0$, $i=0,1$ and $m-1\equiv t+k \pmod{2}$. Fix
a plane $H$. Let $c$ be the maximal integer such that $\binom{t+k+2-c}{2} \le d_{t,k}$. Let $E\subset H$ be a general linear projection of a general smooth and rational degree $c$ curve $E'\subset \PP^3$. The curve $E$ is nodal and it has $(c-1)(c-2)/2$ singular points. 
Set $\chi := \cup _{p\in \mathrm{Sing}(E)} \chi ({p})$. The union $E\cup \chi$ is the flat limit
of a family of degree $c$ smooth rational curves in $\PP^3$ (\cite[Fig. 11 at p. 260]{hart}. Hence to prove that a general union of some $C_{t,k}$ and a smooth rational curve of degree $c$ is contained in no surface of degree $t+k$ it is sufficient to prove that $h^0(\Ii _{C_{t,k}\cup E\cup \chi}(t+k) =0$ for a general $C_{t,k}$. Thus it is sufficient to prove
that  $h^0(\Ii _{C_{t,k}\cup E}(t+k)) =0$ for a general $C_{t,k}$. For a general $C_{t,k}$ we have $C_{t,k}\cap E=\emptyset$ and $C_{t,k}\cap H$ is a general subset of $H$
with cardinality $d_{t,k}$. By definition $c$ is the minimal positive integer such that $h^0(H,\mathcal {I}_{C_{t,k}\cap H}(t+k-c)) =0$. 
Set $\beta = h^0(\Oo _{C_{t,k}\cup E\cup \chi}(t+k)) -\binom{t+k+3}{3}$. Since $\binom{t+k+2-c}{2} -\binom{t+k-1}{2} = t+k+1-c$,
we have $\beta \le (c-1)(c-2)/2 +t+k+1-c$. Then we continue from the critical value $t+k$ to the critical value $t+k+2$ and so on.

At the end we obtain some $B\in U(t,k,a_d,b)$ with $h^0(\Ii _B(m-1)) =0$ if $1+d(m-1)-g \ge \binom{m+2}{3}+\beta$. In particular it is sufficient to assume $d\ge d(m,g)_{\mathrm{min}} +\lceil \beta/(m-1)\rceil$. We have $c \sim \sqrt{2}t$, because $\deg (C_{t,k}) \sim t^2$ and $\binom{t+k+2}{2} \sim 2t^2$. Hence $\beta \sim (c-1)(c-2)/2 \sim t^2$.
Since $t\le m/4$, it is sufficient to have roughly $d\ge d(m,g)_{\mathrm{min}} +m/4$. 

\begin{lemma}\label{n9}
Fix $t$ and $k\in \{t-1,t\}$ such that $y\equiv t+k-1 \pmod{2}$
and let $g_{t,k} +g(t+k+5,t,k) \le g \le -1 + g_{t+1,k+1} + g(t+k+7,t+1,k+1)$.
Then we have $y \le \sqrt{20} t-1$. In particular, if $t \ge \lfloor m/\sqrt{20}\rfloor -5$ then $y \le m-6$. 
\end{lemma}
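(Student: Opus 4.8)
The plan is to use the maximality built into the definition of $y$ to convert the upper bound on $g$ into an upper bound on $g(y,t,k)$, and then to read off the bound on $y$ from the exact formula (\ref{eq+a2}). By the definition of $y$ we have $g_{t,k}+g(y,t,k)\le g$, while the hypothesis gives $g<g_{t+1,k+1}+g(t+k+7,t+1,k+1)$; subtracting $g_{t,k}$ yields
\[
g(y,t,k)<\bigl(g_{t+1,k+1}-g_{t,k}\bigr)+g(t+k+7,t+1,k+1).
\]
Both terms on the right are explicit. The cubic formula for $g_{t,k}$ gives $g_{t+1,k+1}-g_{t,k}=(t^2-1)+(k^2-1)=t^2+k^2-2\le 2t^2-2$, and since $t+k+7=(t+k+3)+4$ is two critical values above the base value $t+k+3$ for $(t+1,k+1)$, Remark \ref{aa+0} together with two uses of (\ref{eq+a3}) evaluate $g(t+k+7,t+1,k+1)$ to a quantity linear in $t$ (namely $3t+8$ if $k=t$ and $3t+7$ if $k=t-1$). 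Hence the right-hand side is $2t^2+O(t)$, with leading coefficient exactly $2$.

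To bound the left side from below I would use the exact formula: substituting $c(y,t,k)=g(y,t,k)+3+\tfrac32(y-t-k-1)$ into (\ref{eq+a2}) gives
\[
(y-1)\,g(y,t,k)=\binom{y+3}{3}-y\,d_{t,k}-3y-\frac{3y(y-t-k-1)}{2}-3+g_{t,k}-d(y,t,k),
\]
and $0\le d(y,t,k)\le y-2$ turns this into a lower bound for $g(y,t,k)$. Inserting it into the displayed inequality and clearing the factor $y-1$ leaves a single polynomial inequality in $y$ and $t$; the decisive feature is that the only cubic-in-$t$ term, $g_{t,k}$, appears on the left divided by $y-1$ and is cancelled on the right by $-g_{t,k}$, so the genuine comparison is of size $t^2$. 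Setting $y=\beta t$, the top-order part of the cleared inequality is $\tfrac{t^3}{6}\bigl(\beta^3-18\beta+4\bigr)$. The relevant root of $\beta^3-18\beta+4$ is $\approx 4.12$, and since $\sqrt{20}\approx 4.47$ exceeds it we have $\beta^3-18\beta+4>0$ at $\beta=\sqrt{20}$. Therefore $y\ge\sqrt{20}\,t$ would force $g(y,t,k)$ above the right-hand side, a contradiction, so $y\le\sqrt{20}\,t-1$; the comfortable gap between $4.12$ and $\sqrt{20}$ leaves room to absorb the $-1$ and all lower-order terms once $t\ge 27$.

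For the last assertion I would insert into $y\le\sqrt{20}\,t-1$ the bound on $t$ valid in the regime where the lemma is applied, namely $t\le\lfloor m/\sqrt{20}\rfloor-5$ (which comes from $g\le\varphi(m)=g_{\lfloor m/\sqrt{20}\rfloor-4,\,\lfloor m/\sqrt{20}\rfloor-5}$ and the monotonicity of $g_{t,k}$): then $y\le\sqrt{20}\bigl(\lfloor m/\sqrt{20}\rfloor-5\bigr)-1\le m-5\sqrt{20}-1<m-6$. The main obstacle I anticipate is the bookkeeping in the polynomial inequality: because the dominant $t^3$ contributions cancel, one must carry the $t^2$ and linear coefficients exactly and verify, uniformly for $t\ge 27$, that the crude estimates used for $d(y,t,k)$ and for $g(t+k+7,t+1,k+1)$ still keep $\beta^3-18\beta+4$ strictly positive at $\beta=\sqrt{20}$.
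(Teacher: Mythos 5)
Your argument is correct in substance, but it obtains the core estimate by a genuinely different mechanism than the paper's. Both proofs begin identically: maximality of $y$ plus the upper bound on $g$ give $g(y,t,k)<(g_{t+1,k+1}-g_{t,k})+g(t+k+7,t+1,k+1)$, whose right-hand side is $2t^2+O(t)$ (the paper settles for the crude estimate $g(t+k+7,t+1,k+1)\le 3(t+k+7)$ where you evaluate it exactly as $3t+8$, resp.\ $3t+7$; both suffice, and your values check out). The divergence is in the lower bound for $g(y,t,k)$. The paper telescopes $c(y,t,k)$ from the base value $c(t+k+1,t,k)=k+3$ and applies Lemma \ref{aa+1} to each increment, getting $g(y,t,k)\ge k-\frac{3}{2}(y-t-k-1)+(t+k+y+7)(y-t-k-1)/8$; comparing this quadratic with $2t^2$ yields $(y+1)^2\le 20t^2$ in one line, and $\sqrt{20}$ is exactly the threshold of that method ($4t^2+16t^2=20t^2$). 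You instead solve the defining relation (\ref{eq+a2}) for $g(y,t,k)$ exactly, up to the bounded correction $0\le d(y,t,k)\le y-2$, and compare leading coefficients. Your cleared inequality is right: with $d_{t,k}=t^2+O(t)$ and $g_{t,k}=\frac{2}{3}t^3+O(t^2)$ for both $k=t$ and $k=t-1$, the top term is $\frac{t^3}{6}(\beta^3-18\beta+4)$, the largest root is $\approx 4.13<\sqrt{20}$, the left-minus-right expression is increasing in $y$ in the relevant range, and at $t=27$, $y=\sqrt{20}t-1$ the cubic margin ($\approx 1.9t^3$) dominates the lower-order deficits ($\approx 15t^2$), so the bookkeeping you flag as the main obstacle does go through uniformly for $t\ge 27$, including the shift from $y\ge\sqrt{20}t$ to $y\ge\sqrt{20}t-1$ needed to recover the stated $-1$. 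What each route buys: the paper's stays quadratic, reuses Lemma \ref{aa+1}, and produces $\sqrt{20}$ with no slack; yours bypasses Lemmas \ref{r+00} and \ref{aa+1} entirely and is sharper (your true threshold is $\beta\approx 4.13$, so you could prove a better constant), at the price of the cubic analysis.

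One remark on the final assertion: you prove it from $t\le\lfloor m/\sqrt{20}\rfloor-5$, whereas the statement literally reads $t\ge\lfloor m/\sqrt{20}\rfloor-5$. With ``$\ge$'' the claim cannot follow from $y\le\sqrt{20}t-1$ (a bound that grows with $t$) and is in fact false for large $t$; the inequality in the statement is a typo, and your reading---an upper bound on $t$, which is exactly what $g\le\varphi(m)=g_{\lfloor m/\sqrt{20}\rfloor-4,\lfloor m/\sqrt{20}\rfloor-5}$ forces in the proof of Theorem \ref{bingo}---is the one that is true and that the application requires. Since the paper's own proof stops at $(y+1)^2\le 20t^2$ and never addresses this clause, your treatment of it is, if anything, more complete.
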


\begin{proof}
We have $g_{t+1,k+1} - g_{t,k} = 2t^2-2$ if $k=t$ and $g_{t+1,k+1} - g_{t,k} = 2t^2-2t-1$ if $k=t-1$. For all integer $x \ge t+k+1$ such that
$x\equiv t+k+1 \pmod{2}$ we have $c(x,t,k) -c(x-2,t,k)) \ge (x+2)/2$ (Lemma \ref{aa+1}). Remark \ref{aa+0} gives $c(t+k+1,t,k) =k+3$.
By the definition of $y$, we have $y \ge k+t+5$ and $g \ge g_{t,k} + g(y,t,k) = g_{t,k} + c(y,t,k)-3(y-t-k-1)/2-3 
\ge g_{t,k} -3(y-t-k-1)/2+k + \sum _{i=1}^{(y-t-k-1)/2}(c(t+k+1+2i,t,k)-c(t+k+1+2i-2,t,k)) \ge  g_{t,k} -3(y-t-k-1)/2 + k + (t+k+y+7)(y-t-k-1)/8$. 
On the other hand, we have $g \le -1 + g_{t+1,k+1} + g(t+k+7,t+1,k+1) \le -1 + g_{t+1,k+1} + 3(t+k+7)$. 
Hence we get $(t+k+y+7)(y-t-k-1)/8 \le g_{t+1,k+1} - g_{t,k} + 3(y-t-k-1)/2 -k -1 + 3(t+k+7)$ and in particular
$(y+1)^2 \le 20 t^2$.
\end{proof}

\vspace{0.4cm}
\begin{proof}[Proof of Theorem \ref{bingo}:] We fix the integer $g$ and we perform the above construction in both the odd and the even case, by taking either 
$k=t$ or $k=t-1$. We have $h^1(\Oo(C_{t,k}(t-1)=0$, hence we get $h^1(\Oo(C_X(t-1)=0$ by a repeated application of Mayer-Vietoris and semicontinuity. 
For every $t \ge 27$ such that $g \ge g_{t+3,k+3} \ge g_{t,k} +g(t+k+5,t,k)$ 
we get an integer $y\equiv t+k-1$ such that the statement of Theorem~\ref{bingo} holds for every $m \ge y+6$ with $m\equiv y \pmod{2}$. By Lemma \ref{n9}, the condition $m \ge y+6$ is satisfied for every $t \ge \lfloor m/\sqrt{20}\rfloor -5$, hence we obtain our statement 
for every $g$  with $2g_{30} = 17052 \le g \le \varphi (m)$.\end{proof}

\vspace{0.4cm}
\begin{proof}[Proof of Corollary \ref{cor}:] Let $m$ be the minimal non-negative integer such that 
\begin{equation}\label{eqtt1}
md+1-g \le \binom{m+3}{3}
\end{equation}
The minimality of $m$ gives
\begin{equation}\label{eqtt2}
(m-1)d +1-g > \binom{m+2}{3},
\end{equation}
in particular $d \ge \frac{(m+2)(m+1)m}{6(m-1)} \ge \frac{m^2}{6}$.
From (\ref{eqtt1}) and (\ref{eqtt2}) we get $d  \le \binom{m+2}{2}$. Since $g\le Kd^{3/2}- 6 \varepsilon d $, we have 
\begin{eqnarray*}
g &\le& \frac{2}{3} \left(\frac{1}{10} \right)^{3/2}  \binom{m+2}{2}^{3/2} - 6 \varepsilon d \\
&\le& \frac{2}{3} \left(\frac{1}{10} \right)^{3/2} \left(\frac{(m+2)^2}{2}\right)^{3/2}- 6 \varepsilon d \\
&\le& \frac{2}{3} \left(\frac{1}{20} \right)^{3/2} (m+2)^3 - \varepsilon m^2 \le \varphi (m) 
\end{eqnarray*}
(notice that the coefficients of $m^3$ are controlled by our choice of $K$ and the coefficients of $m^2$ are controlled by our choice of $\varepsilon$). 
Since $g \le \varphi (m)$, Theorem \ref{bingo} covers all degrees $d_0$ in the interval $d(m,g)_{\mathrm{min}}\le  d_0 \le d(m,g)_{\mathrm{max}}$. 
In order to check that $d$ is in this interval, just notice that $d \ge d(m,g)_{\mathrm{min}}$ by (\ref{eqtt2}) and $d\le d(m,g)_{\mathrm{max}}$ by (\ref{eqtt1}).\end{proof}

\end{document}